\newcommand{\chk}{} %
\DeclareFontFamily{U}{mathx}{\hyphenchar\font45}
\DeclareFontShape{U}{mathx}{m}{n}{
<5> <6> <7> <8> <9> <10>
<10.95> <12> <14.4> <17.28> <20.74> <24.88>
mathx10
}{}
\DeclareSymbolFont{mathx}{U}{mathx}{m}{n}
\DeclareMathAccent{\widecheck}{0}{mathx}{"71}
\titleformat{\section}{\centering\normalfont\scshape}{\thesection.}{.5em}{#1}
\titleformat{\subsection}[runin]{\normalfont\itshape}{\textnormal{\thesubsection.}}{.5em}{#1.}
\titleformat{\subsubsection}[runin]{\normalfont\itshape}{\thesubsubsection.}{.5em}{#1.}
\titlespacing{\section}{0em}{1em}{0.5em}
\titlespacing{\subsection}{0em}{.5em}{0.5em}
\begin{document}
\setcounter{tocdepth}{1}

\newtheorem{theorem}{Theorem}[section]
\newtheorem{thma}{Theorem}
\newtheorem{proposition}[theorem]{Proposition}
\newtheorem{corollary}[theorem]{Corollary}
\newtheorem{lemma}[theorem]{Lemma}
\newtheorem{sublemma}[theorem]{Sublemma}
\newtheorem{conjecture}[theorem]{Conjecture}
\newtheorem{variant}[theorem]{Variant}
\newtheorem{fact}[theorem]{Fact}
\newtheorem{observation}[theorem]{Observation}
\def\distance{\operatorname{distance}}

\theoremstyle{remark}
\newtheorem{definition}[theorem]{Definition}
\newtheorem{remark}[theorem]{Remark}
\newtheorem*{remarka}{Remark}

 \numberwithin{theorem}{section}
 \numberwithin{equation}{section}

\newcommand{\norm}[1]{ \|  #1 \|}

\def\bk{\mathbf k}
\def\kernel{\operatorname{kernel}}
\def\eps{\varepsilon}

\def\deltaprime{\delta'}
\def\bart{\bar t}

\def\R{\mathbb{R}}
\def\reals{\mathbb{R}}
\def\complex{\mathbb{C}}
\def\integers{\mathbb{Z}}
\def\Z{\mathbb{Z}}
\newcommand{\A}{\mathcal{A}}
\newcommand{\B}{\mathcal{B}}
\def\g{\mathbbm{g}}
\def\h{\mathbbm{h}}
\def\md{\mathscr{D}}

\def\p{\mathrm{p}}
\def\q{\mathrm{q}}
\def\r{\mathrm{r}}

\def\scriptt{\mathcal{T}}
\def\scriptm{\mathcal{M}}
\def\bff{\mathbf f}

\subjclass[2020]{42B15, 42B20, 42B25, 05D10}
\keywords{multilinear singular integrals, multilinear oscillatory integrals}

\title[A  triangular Hilbert transform]{Trilinear smoothing inequalities and\\a variant of the triangular Hilbert transform}

\author[M. Christ, P. Durcik, J. Roos]{Michael Christ \ \ Polona Durcik \ \ Joris Roos}

\address{
        Michael Christ\\
        Department of Mathematics\\
        University of California \\
        Berkeley, CA 94720, USA}
\email{mchrist@berkeley.edu}

\address{
        Polona Durcik\\
        Schmid College of Science and Technology\\
        Chapman University\\
        Orange, CA 92866, USA
        }
\email{durcik@chapman.edu}

\address{Joris Roos\\
Department of Mathematical Sciences\\
University of Massachusetts Lowell\\
Lowell, MA 01854, USA\\
\& School of Mathematics\\
The University of Edinburgh\\
Edinburgh EH9 3FD, UK}
\email{joris\_roos@uml.edu}

\thanks{The first author was supported by National Science Foundation grant
DMS-1901413.}

\begin{abstract}
Lebesgue space inequalities are proved 
for a variant of the triangular Hilbert transform involving curvature.
The analysis relies on a crucial trilinear smoothing inequality developed herein, 
and on bounds for an anisotropic variant of the twisted paraproduct.

The trilinear smoothing inequality also leads to Lebesgue space bounds for a corresponding maximal function
and a quantitative nonlinear Roth-type theorem concerning patterns in the Euclidean plane. 
\end{abstract}

\date{November 13, 2020.}

\maketitle

\def\LocT{\Lambda}
\def\LocTOp{T_{\mathrm{loc}}}

\section{Introduction}

Consider the trilinear form
\begin{equation}
\scriptt(f_1,f_2,f_3) =  \int_\scriptm \prod_{j=1}^3 f_j(x_j)\,d\mu(x),
\end{equation}
where $x = (x_1,x_2,x_3)\in (\reals^d)^3$, 
$\scriptm$ is a submanifold of $(\reals^d)^3$ of dimension strictly less
than $3d$, and $\mu$ is a compactly supported measure on $\scriptm$
with smooth density. 
Inequalities of the form
\begin{equation} \label{generaltriineq}
|\scriptt(f_1,f_2,f_3)| \leq C \prod_j \norm{f_j}_{W^{p,s}}
\ \text{ for some $s<0$},
\end{equation}
where $W^{p,s}$ is the Sobolev space of functions having $s$
derivatives in $L^p$, have recently been investigated
in the case $d=1$ and $\dim(\scriptm)=2$ \cite{Chr20}.
In that work, microlocal orthogonality relations were combined with
upper bounds for Lebesgue measures of sublevel sets to establish
inequalities of this form in nearly maximal generality
for real analytic relations $\scriptm$. 
A particular example was analyzed earlier by Bourgain \cite{Bou88}. The paper \cite{Chr20} also establishes
certain results for $d>1$; in those results,
$\dim(\scriptm)=4$ when $d=2$.

In the present paper we begin the study of more singular situations. We focus on a key example with $d=2$, $\dim(\scriptm)=3$, that goes beyond \cite{Chr20} and establish for it an inequality of the type 
\eqref{generaltriineq}
in Theorem~\ref{mainresult}.  In the analysis of inequalities of the type \eqref{generaltriineq}
for $d>1$, a phenomenon arises that was not encountered for $d=1$.
We introduce two additional ingredients to the framework developed in \cite{Chr20}
in order to treat this phenomenon. 
While Theorem \ref{mainresult} deals with a particular case that is motivated by its applications, we believe that there will be further applications of the underlying
method of proof.

As the main application of Theorem \ref{mainresult} we derive Lebesgue space bounds for a modulation invariant bilinear singular integral operator, which is related to the triangular Hilbert transform; see Theorem \ref{thm:singint}.
In addition, we develop applications to associated maximal functions in Theorem \ref{thm:maxfct}, 
and to the existence of certain Roth-type patterns in measurable sets 
in Theorem~\ref{thm:patterns}.
Another application concerning almost everywhere convergence of certain continuous-time nonlinear ergodic averages associated 
to two commuting $\reals$--actions was established in joint work of the authors with Kova\v{c} \cite{cdkr}.

We first indicate these applications, before discussing the foundational inequality on which they rely. 

\subsection{A triangular Hilbert transform with curvature}

Consider the bilinear singular integral operator  
\begin{equation}\label{eqn:thtc}
T(f_1, f_2)(x,y) = \mathrm{p.v.}\int_{\R} f_1(x+t, y) f_2(x, y+t^2)\,\frac{dt}t,
\end{equation}
defined \emph{a priori} for test functions $f_1, f_2: \R^2\to\complex$. 
We obtain the following result.
\begin{thma}\label{thm:singint}
Let $p,q\in (1,\infty)$, $r\in [1, 2)$ satisfy $p^{-1}+q^{-1}=r^{-1}$.
Then $T$ extends to a bounded operator $L^p\times L^q\to L^r$.
\end{thma}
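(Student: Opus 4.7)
The plan is to dualize Theorem~\ref{thm:singint} against $f_3\in L^{r'}$ (where $1/p+1/q+1/r'=1$) and reduce to proving
\[
|\Lambda(f_1,f_2,f_3)| \lesssim \|f_1\|_p\|f_2\|_q\|f_3\|_{r'}
\]
for the trilinear form
\[
\Lambda(f_1,f_2,f_3) := \int_{\reals^3} f_1(x+t,y)\,f_2(x,y+t^2)\,f_3(x,y)\,\frac{dt}{t}\,dx\,dy.
\]
First I would dyadically decompose the singular kernel $1/t = \sum_{k\in\Z} 2^{-k}\psi(2^{-k}t)$, with $\psi$ smooth, odd and supported in $|t|\sim 1$, and write $\Lambda=\sum_k\Lambda_k$. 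The parabolic dilation $(x,y,t)\mapsto(2^k x,2^{2k}y,2^k t)$ preserves both the submanifold $\scriptm=\{(x+t,y,x,y+t^2,x,y)\}$ and the H\"older relation $1/p+1/q+1/r'=1$, so after changing variables each $\Lambda_k$ becomes a fixed localized model form $\Lambda_0$ evaluated at anisotropically rescaled functions $f_j^{(k)}(x,y):=f_j(2^k x,2^{2k}y)$. The task reduces to producing estimates for $\Lambda_0$ that, when combined with the $k$-dependent scaling of the norms of the $f_j^{(k)}$, sum absolutely in $k$.

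Next I would Littlewood--Paley decompose each $f_j$ in both frequency variables and partition the resulting frequency triples into a \emph{diagonal} set, where the three Fourier supports are matched in the directions pinned down by stationary phase of the $t^2$ factor, and an \emph{off-diagonal} set. On the off-diagonal frequency cells, Theorem~\ref{mainresult} should supply a bound of the form
\[
|\Lambda_0(f_1,f_2,f_3)| \le C \prod_{j=1}^3 \|f_j\|_{W^{p_j,-s_j}}
\]
with strictly negative Sobolev exponents $-s_j<0$. Because the negative orders of smoothness penalize high frequencies, this translates after undoing the rescaling into geometric decay both in the frequency mismatch between the three inputs and in the scale parameter $k$, so the off-diagonal contribution sums to the required estimate by H\"older's inequality.

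The main obstacle will be the diagonal regime, where the three Fourier supports are aligned in the direction picked out by the $t^2$-curvature and Theorem~\ref{mainresult} gives no useful gain. Here the operator $T$ inherits modulation symmetries analogous to those of the ordinary triangular Hilbert transform, so no purely orthogonal argument at a single scale can close the estimate. The plan is to recast the matched-frequency sum as a model operator---the anisotropic variant of the twisted paraproduct alluded to in the abstract, adapted to the parabolic dilations $(x,y)\mapsto(2^k x,2^{2k} y)$---whose Lebesgue-space bounds are to be established separately in the paper. Combining the diagonal and off-diagonal estimates, each uniform in $k$ after rescaling, then yields the claimed inequality.

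I expect the restriction $r\in[1,2)$ to enter precisely at the diagonal step, via an $L^2$-based Cauchy--Schwarz/quasi-orthogonality argument underlying the anisotropic twisted paraproduct bound; the full range $p,q\in(1,\infty)$ in the conclusion should then emerge by interpolating this $L^2$-type diagonal estimate with trivial H\"older-type bounds for the non-singular truncations of $T$.
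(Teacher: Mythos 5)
Your overall shape---dyadic scale decomposition, Littlewood--Paley in frequency, and then feeding different frequency regimes into the trilinear smoothing inequality of Theorem~\ref{mainresult} or the anisotropic twisted paraproduct of Theorem~\ref{thm:anisotp}---is the paper's scheme, but you have the roles of the two inputs reversed, and this is a genuine conceptual error, not a relabelling.

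In the paper, the matched high-frequency regime $T^{\mathrm{H}}$ (where, after rescaling to unit scale, $\widehat{f_1}$ is supported at $|\xi_1|\asymp 2^{k_1}$, $\widehat{f_2}$ at $|\xi_2|\asymp 2^{k_2}$, with $k_1,k_2$ both large and within $O(1)$ of each other so that the $t$-integral has a stationary point) is precisely where Theorem~\ref{mainresult} is invoked. The smoothing gain $\lambda^{-\sigma}$ in \eqref{eqn:local-gain1} is keyed to the \emph{size} of the input frequencies, not to a mismatch, so it covers the aligned case; this is the whole point of the theorem. Conversely, the twisted paraproduct Theorem~\ref{thm:anisotp} requires a multiplier satisfying the anisotropic symbol estimates \eqref{symest}, i.e.\ an anisotropic Calder\'on--Zygmund kernel. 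The paper obtains such a kernel from $T^{\mathrm{L}}$ (both frequencies low, so the single-scale kernel $\kappa$ is Schwartz with vanishing mean, see \eqref{sym-est}) and from $T^{\mathrm{M}}$ (frequencies badly mismatched, so the $t$-integral has no stationary point and repeated integration by parts gives \eqref{est-statph}, again producing a Schwartz kernel). If you instead try to force the matched high-frequency sum into the form $T_m$, the multiplier fails \eqref{symest}: the single-scale kernel has only stationary-phase decay with a persistent oscillatory tail concentrated along the parabola $\xi_1\asymp 2^{-j}\xi_2$, and the superposition over scales does not converge to a symbol that is smooth away from the origin. So the step you describe as "recasting the matched-frequency sum as an anisotropic twisted paraproduct" cannot close. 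The off-diagonal regime you propose to feed into Theorem~\ref{mainresult} is the paper's $T^{\mathrm{M}}$; Theorem~\ref{mainresult} would also apply there (since at least one of $k_1,k_2$ is large), but the paper finds it cleaner to use non-stationary phase and fall back to the paraproduct. The irreducibly hard regime, requiring the new inequality, is the matched one, not the mismatched one.

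A second gap: the interpolation to reach general exponents from the $L^2\times L^2\to L^1$ bound that Theorem~\ref{mainresult} delivers on each frequency block is not "trivial H\"older." After the frequency restriction, the single-scale kernel is shifted by $\sim 2^{|k|}$ relative to its width, so a naive bound on $T_j(\Delta^{(1)}_{j+k_1}f_1,\Delta^{(2)}_{2j+k_2}f_2)$ at general exponents grows exponentially in $|k|$ and would defeat the interpolation with the exponential decay $2^{-\delta|k|}$ from \eqref{eqn:HHT0}. The paper resolves this via the shifted maximal function bound \eqref{eqn:shiftedmaxfct-basic}, its Fefferman--Stein variant \eqref{eqn:shiftedfs}, and the domination Lemma~\ref{lem:shiftedmaxfct}, which together give only polynomial growth $|k|^4$ at general exponents. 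This ingredient is essential to the summation over $k$ and is missing from your sketch.
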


The operator $T$ is a 
variant of
\begin{equation}
\label{tht}
    (f_1,f_2)\mapsto \mathrm{p.v.}\int_{\R} f_1(x+t, y) f_2(x, y+t)\,\frac{dt}t,
\end{equation}
which is known as the {\em triangular Hilbert transform}. 
Determining whether the triangular Hilbert transform satisfies any 
Lebesgue norm bounds is a significant open problem. Partial progress for a  Walsh model  
was obtained by Kova\v{c}, Thiele and Zorin-Kranich \cite{KTZ15}. Cancellation estimates concerning a truncated
triangular Hilbert transform with a finite number of scales, with improvement on the trivial
bound by the number of scales, were addressed in 
\cite{Zor17}, \cite{DKT16}.

Theorem \ref{thm:singint} unifies two previously known
inequalities.  
First, it implies (see \S \ref{sec:bht}) $L^p$ bounds for a variant of the bilinear Hilbert transform with curvature,
\begin{equation}\label{eqn:bht0}
(f_1,f_2)\mapsto \mathrm{p.v.}\int f_1(x+t)f_2(x+t^2)\,\frac{dt}t.
\end{equation} 
The $L^2\times L^2\to L^1$ bound for this operator was first proved by Li \cite{Li13}. An alternative proof in a more general context was given by Lie \cite{Lie15}; also see \cite{LX16, Lie18} for $L^p$ bounds.  
The operator \eqref{eqn:bht0} is a nonlinear variant (in the sense that the mapping $(x,t)\to x+t^2$ is nonlinear) 
of the bilinear Hilbert 
transform, 
studied by Lacey and Thiele 
\cite{laceythiele1}, \cite{laceythiele2}. 

Second, Theorem \ref{thm:singint} also yields (see \S \ref{sec:sw}) $L^p$ bounds for the operator
\begin{equation}\label{eqn:sw0}
f\mapsto \sup_{N\in\R} \Big| \mathrm{p.v.} \int_{\R} f(x-t) e^{iN t^2}\,\frac{dt}t\Big|,
\end{equation}
which were proved by Stein \cite{Ste95} (also see work of Stein and Wainger \cite{SW01}). Note that replacing $t^2$ by $t$ in the phase gives Carleson's operator.

The operators $T$ and \eqref{tht} both exhibit certain general modulation symmetries that are not present  in \eqref{eqn:bht0} and also not in the classical bilinear Hilbert transform \cite{laceythiele1}, \cite{laceythiele2}; see also   comments after Theorem \ref{mainresult}. 
The study of singular integral operators with general modulation symmetries and a certain bipartite structure was initiated  by Kova\v{c}  in the work on the twisted paraproduct \cite{Kov12}, which is a degenerate case of the two-dimensional bilinear Hilbert transform, \cite{DT10}. 
In Theorem \ref{thm:singint}  we are concerned with an operator $T$ which features a more singular  structure.
In addition to trilinear smoothing inequalities,  Theorem \ref{thm:singint} also relies on bounds for   an   anisotropic variant  of the operator in \cite{Kov12}.
 
Let $\alpha$ and $\beta$ be positive integers. Let $m$ be a 
smooth function on $\mathbb{R}^2\setminus\{(0,0)\}$ which satisfies 
\begin{align}\label{symest}
|\partial_\xi^k \partial_\eta^\ell m(\xi,\eta)| \leq C_{\alpha,\beta}\, (|\xi|^{1/\alpha}+|\eta|^{1/\beta})^{-\alpha k - \beta \ell}
\end{align}
for all  $k,\ell \geq 0$ up to a large finite order.
For test functions $f_1,f_2$ on $\mathbb{R}^2$ we let
\[ T_m(f_1,f_2)(x,y) = \int_{\R^2} f_1(x+s,y) f_2(x,y+t) K(s,t)\,ds\,dt \]
with $K$ the distribution satisfying $m=\widehat{K}$.

\begin{thma}\label{thm:anisotp}
Let $p,q\in (1,\infty)$, $r\in (\tfrac12, 2)$ be such that $p^{-1}+q^{-1}=r^{-1}$. Assume that $m$ satisfies \eqref{symest}. Then $T_m$ extends to a bounded operator %
$L^p\times L^q\rightarrow L^r$.
\end{thma}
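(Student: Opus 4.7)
The plan is to reduce Theorem~\ref{thm:anisotp} to bounds for an anisotropic twisted paraproduct via a Littlewood-Paley decomposition of the multiplier adapted to the anisotropic scaling. The symbol condition \eqref{symest} is precisely the Marcinkiewicz estimate relative to the dilations $(\xi,\eta)\mapsto(\lambda^\alpha\xi,\lambda^\beta\eta)$, so I introduce the anisotropic norm $\rho(\xi,\eta):=|\xi|^{1/\alpha}+|\eta|^{1/\beta}$ and write $m=\sum_{k\in\Z} m_k$, where each $m_k$ is supported in the annulus $\{\rho\sim 2^k\}$ and, after anisotropic rescaling, satisfies uniform bump-type estimates on the unit annulus.

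Next I would split each $m_k$ into three pieces $m_k^{\mathrm{I}},m_k^{\mathrm{II}},m_k^{\mathrm{III}}$, supported respectively on the sectors $\{|\xi|^{1/\alpha}\gg|\eta|^{1/\beta}\}$, $\{|\eta|^{1/\beta}\gg|\xi|^{1/\alpha}\}$, and the diagonal $\{|\xi|^{1/\alpha}\sim|\eta|^{1/\beta}\}$. On the first two sectors $m_k$ factorizes, up to a rapidly convergent expansion, as a genuine Littlewood-Paley bump in one frequency variable tensored with a smooth low-frequency cutoff in the other. The associated operators are then, up to reindexing the sum over $k$, one-dimensional paraproducts acting along a single coordinate, and the full claimed range of $L^p\times L^q\to L^r$ bounds follows from classical Coifman-Meyer theory combined with the Fefferman-Stein maximal inequality applied in the transverse direction.

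The main content lies in the diagonal piece $T_{m^{\mathrm{diag}}}$ with $m^{\mathrm{diag}}:=\sum_k m_k^{\mathrm{III}}$. After anisotropically rescaling the $k$th summand by $(2^{\alpha k},2^{\beta k})$, its kernel becomes a smooth compactly supported function of $(s,t)$, and the resulting operator fits the bipartite framework of Kova\v{c}'s twisted paraproduct \cite{Kov12} but with anisotropic scales. I would adapt Kova\v{c}'s telescoping / Bellman-function argument to the anisotropic dyadic grid whose cells have dimensions $2^{\alpha k}\times 2^{\beta k}$. Since the bipartite shift structure (one input translated only in the first coordinate, the other only in the second) is preserved under rectangular dilations, the perfect-square identity driving the base $L^4\times L^4\to L^2$ estimate should carry through with only notational modifications. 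The full range $r\in(1/2,2)$ is then reached by a standard multilinear interpolation argument after upgrading the base estimate to a vector-valued form.

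The principal obstacle I anticipate lies in this last adaptation: verifying that Kova\v{c}'s perfect-square telescoping identity remains a signed sum of squares when the cube-based conditional expectations are replaced by those over anisotropic rectangles. This amounts to checking that the two coordinate-wise martingale differences separate and recombine in the correct order to reproduce a nonnegative quadratic form. The identity is driven by the bipartite shift structure rather than by isotropy, so I expect it to persist, but the algebra must be executed carefully to confirm that no indefinite cross terms survive.
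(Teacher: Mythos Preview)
Your overall strategy---reduce to a Kova\v{c}-style telescoping argument on anisotropic rectangles---matches the paper's, but there is a genuine gap in your treatment of the off-diagonal sectors. You claim that on $\{|\xi|^{1/\alpha}\gg|\eta|^{1/\beta}\}$ and its symmetric counterpart the operator becomes, after a tensor expansion, a one-dimensional paraproduct along a single coordinate, treatable by classical Coifman--Meyer theory plus Fefferman--Stein in the transverse direction. This is not correct. Even when the $k$th piece of the multiplier factors as $\psi(2^{-\alpha k}\xi)\varphi(2^{-\beta k}\eta)$, the resulting operator is
\[
\sum_k (f_1*_1\widecheck{\psi}_{\alpha k})(x,y)\,(f_2*_2\widecheck{\varphi}_{\beta k})(x,y),
\]
which is precisely the (anisotropic) twisted paraproduct: the Littlewood--Paley projection on $f_1$ acts in the first coordinate while the averaging on $f_2$ acts in the second. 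There is no common frequency variable in which to run a Coifman--Meyer square-function argument, and this is exactly the obstruction that made Kova\v{c}'s isotropic result \cite{Kov12} nontrivial. The paper's cone decomposition in fact produces only two pieces (the regions $t\le s$ and $s\le t$ in \eqref{mdecompose2}, corresponding to your sectors I and II), and \emph{both} are handled by the full telescoping machinery of Proposition~\ref{prop:tree}; they differ only by swapping the roles of $\xi$ and $\eta$. There is no separate ``easy'' off-diagonal case.

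For the core telescoping step your outline is in the right spirit, though the paper works with continuous-parameter Gaussian bumps $\g,\h$ and an explicit identity $\alpha\Theta^{(1)}+\beta\Theta^{(2)}=\Xi_{\{Q_\mathcal{T}\}}-\Xi_{\mathcal{L}(\mathcal{T})}+\mathcal{B}$ (Lemma~\ref{lemma:telescoping}) rather than dyadic martingales. The positivity you worry about is not obtained as a single perfect square; rather one observes $\Theta^{(2)}(f,f,f,f)\ge 0$, so the identity bounds $\Theta^{(1)}$ by boundary and bark terms, and a further Cauchy--Schwarz plus a second application of the identity closes the loop. Finally, the extension from the base range $2<p,q<\infty$ to the full range is not done by an abstract vector-valued interpolation but by Bernicot's fiber-wise Calder\'on--Zygmund decomposition \cite{Ber12}, applied successively in each coordinate to push $p$ and then $q$ below $2$; your ``vector-valued upgrade plus multilinear interpolation'' is too vague to assess, and in particular it is not clear how you would reach $r<1$ without a weak-type endpoint input of this kind.
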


A  very  recent work of Kova\v{c} \cite{Kov20} concerning certain anisotropic point configurations in Euclidean space also features an estimate for a certain multilinear form related to the operator $T_m$.  Some variants of $T_m$ with general dilation structure have also been studied in \cite{KS15}.

\begin{remarka}
Our methods also apply to the operators  
\[ (f_1,f_2)\longmapsto \mathrm{p.v.}\int_{\R} f_1(x+[t]^\alpha, y) f_2(x, y+[t]^\beta)\,\frac{dt}t,\]
where $\alpha,\beta\ge 1$, $\alpha\not=\beta$ are real numbers and $[t]^\alpha$ stands for $|t|^\alpha$ or $\mathrm{sgn}(t)|t|^\alpha$. We have chosen $\alpha=1$, $\beta=2$ in order not to clutter the presentation.
\end{remarka}

\subsection{Further applications}

Consider the bilinear maximal operator
\begin{equation}\label{eqn:maxfct}
M(f_1, f_2) (x,y) = \sup_{r>0} \frac{1}{2r} \int_{-r}^r |f_1(x+t,y) f_2(x,y+t^2)|\,dt.
\end{equation}
\begin{thma}\label{thm:maxfct}
For every $p,q\in (1,\infty)$, $r\in [1,\infty)$ with $p^{-1}+q^{-1}=r^{-1}$ there exists $C\in (0,\infty)$ such that for all test functions $f_1, f_2$,
\[ \|M(f_1, f_2)\|_{L^r} \le C \|f_1\|_{L^p} \|f_2\|_{L^{q}}. \]
\end{thma}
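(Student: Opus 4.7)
The strategy is to dominate $M(f_1,f_2)$ pointwise by a product of one-dimensional Hardy--Littlewood maximal operators, reducing the theorem to the classical one-dimensional maximal inequality. The endpoint $r=1$ is the main obstacle and requires an additional argument.

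For $a,b\in(1,\infty)$ with $a^{-1}+b^{-1}=1$, let $\mathcal M_j$ denote the one-dimensional Hardy--Littlewood maximal operator acting in the $j$th coordinate of a function on $\R^2$. The key pointwise estimate is
\[ M(f_1,f_2)(x,y) \le C_{a,b}\,(\mathcal M_1|f_1|^a(x,y))^{1/a}\,(\mathcal M_2|f_2|^b(x,y))^{1/b}. \]
For fixed $(x,y)$ and $\rho>0$, H\"older's inequality in $t$ with exponents $(a,b)$ splits the normalized integral into an average of $|f_1(\cdot,y)|^a$ over $(x-\rho,x+\rho)$ and an average of $|f_2(x,y+t^2)|^b$ over $|t|\le \rho$. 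The first factor is bounded by $\mathcal M_1|f_1|^a(x,y)$ directly. For the second, the substitution $u=t^2$ yields a weighted average on $(0,\rho^2)$ with singular weight $u^{-1/2}$; decomposing this interval dyadically as $\bigcup_{k\ge 0}[2^{-k-1}\rho^2,2^{-k}\rho^2]$ and summing a geometric series with ratio $2^{-1/2}$ controls it, uniformly in $\rho$, by a constant times $\mathcal M_2|f_2|^b(x,y)$.

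The non-endpoint range $r>1$ now follows quickly: the assumption $1/p+1/q=1/r<1$ permits a choice of $a\in(1,p)$ and $b\in(1,q)$ with $a^{-1}+b^{-1}=1$, after which H\"older in $(x,y)$ with exponents $(p,q)$, combined with the strong-type $L^{p/a}$ and $L^{q/b}$ bounds for $\mathcal M_1$ and $\mathcal M_2$ (obtained by Fubini from the one-dimensional maximal theorem), yields the claim.

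The hard part is the endpoint $r=1$: the constraints $a\le p$, $b\le q$, $a^{-1}+b^{-1}=1$ together with $1/p+1/q=1$ force $a=p$ and $b=q$, at which the Hardy--Littlewood maximal operator is only of weak type and the simple pointwise reduction breaks down. To handle this case, I would first prove a restricted weak-type $L^p\times L^q\to L^{1,\infty}$ estimate by a Vitali-type covering argument adapted to the parabolic anisotropy of the curve $t\mapsto(t,t^2)$: to each point where a selected scale $\rho(x,y)$ witnesses $M(\mathbf 1_{E_1},\mathbf 1_{E_2})(x,y)>\lambda$, attach an anisotropic rectangle of dimensions comparable to $\rho\times\rho^2$ and extract a disjoint subcollection in the usual way. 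Interpolating this endpoint weak-type bound with the strong bounds already obtained for all triples with $r_0>1$ via multilinear Marcinkiewicz-type interpolation would then produce the strong $L^p\times L^q\to L^1$ estimate.
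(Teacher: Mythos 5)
Your pointwise bound
\[
M(f_1,f_2)(x,y) \lesssim_{a,b} \big(\mathcal M_1|f_1|^a(x,y)\big)^{1/a}\big(\mathcal M_2|f_2|^b(x,y)\big)^{1/b}
\]
is correct, including the dyadic summation handling the $u^{-1/2}$ weight, and it does settle the case $r>1$; in fact the paper records exactly this observation in the remark following Theorem~\ref{thm:maxfct}.

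However, the plan for the endpoint $r=1$ does not close. A Vitali covering adapted to $\rho\times\rho^2$ rectangles, or even more simply the pointwise bound above with $a=p$, $b=p'$ together with the weak-type Hardy--Littlewood bound and H\"older for Lorentz spaces, does give $L^p\times L^{p'}\to L^{1,\infty}$ (again, noted in the paper's remark). But interpolating a weak-type bound \emph{at} the endpoint $1/p+1/q=1$ with strong bounds in the open region $1/p+1/q<1$ cannot upgrade the endpoint to strong type: Marcinkiewicz-type interpolation (linear or multilinear) produces strong estimates only in the interior of a range bracketed by weak-type estimates on both sides, and there are no estimates of any type available with $r<1$ to bracket the target point $r=1$ from below. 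The same obstruction persists if you freeze $f_2$ and view $f_1\mapsto M(f_1,f_2)$ as a sublinear operator. Some cancellation mechanism is unavoidable.

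This is precisely what the paper supplies and what your argument omits. The paper's proof of Theorem~\ref{thm:maxfct} (\S\ref{sec:max-red}) does not proceed by size estimates alone. It Littlewood--Paley decomposes $M_j = M_j^{\mathrm{L}}+M_j^{\mathrm{M}}+M_j^{\mathrm{H}}$. For the low and mixed pieces, a pointwise domination by products of maximal functions in the spirit of yours handles all exponents (even $r<1$). For the high-frequency pieces, the trilinear smoothing inequality of Theorem~\ref{mainresult} furnishes a single-scale bound $\|M_j(\Delta^{(1)}_{j+k_1}f_1,\Delta^{(2)}_{2j+k_2}f_2)\|_1\lesssim 2^{-\delta|k|}\|f_1\|_2\|f_2\|_2$; this $L^2\times L^2\to L^1$ estimate with geometric decay in $|k|$ is interpolated against a polynomially growing $L^p\times L^q\to L^r$ bound obtained from shifted maximal functions (Lemma~\ref{lem:shiftedmaxfct}), giving decay $2^{-\delta_{p,q}|k|}$ for all $p,q\in(1,\infty)$, $r\in[1,\infty)$, and then the sum over $k\in\mathfrak F_{\mathrm H}$ converges. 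The frequency decomposition plus the oscillation captured by Theorem~\ref{mainresult} is the engine that makes $r=1$ accessible; it cannot be replaced by a covering lemma, and the interpolation you propose cannot substitute for it.
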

Note that for $r>1$, the claim in Theorem \ref{thm:maxfct} follows immediately from H\"older's inequality and the Hardy-Littlewood maximal theorem (in fact, this argument also gives $L^p\times L^{p'}\to L^{1,\infty}$ bounds for all $p\in (1,\infty)$).

Another application
is the following combinatorial result on the existence of 
patterns in subsets of $[0,1]^2$ of positive measure.

\begin{thma}\label{thm:patterns}
Let $\varepsilon\in(0,\tfrac12)$ and $E\subset [0,1]^2$ a measurable set of Lebesgue measure at least $\varepsilon$. Then there exist 
\[ (x,y), (x+t,y), (x,y+t^2)\in E \]
with $t>\exp(-\exp(\varepsilon^{-C}))$ for some constant $C>0$ not depending on $E$ or $\varepsilon$.
\end{thma}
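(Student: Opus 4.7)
My plan is to derive Theorem~\ref{thm:patterns} via a density-increment strategy, with Theorem~\ref{mainresult} as the crucial analytic input. For $\lambda > 0$, define the counting form
\[
\Lambda_\lambda(f_1, f_2, f_3) = \int_{\R^2} \int_\R f_1(x,y)\, f_2(x+t, y)\, f_3(x, y+t^2)\, \phi(t/\lambda)\, \frac{dt}{t}\, dx\, dy,
\]
where $\phi$ is a fixed nonnegative smooth bump supported in $[1,2]$. If $\Lambda_\lambda(\mathbf{1}_E, \mathbf{1}_E, \mathbf{1}_E) > 0$ for some $\lambda > \exp(-\exp(\varepsilon^{-C}))$, the theorem follows. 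Setting $g := \varepsilon \mathbf{1}_{[0,1]^2}$ and $h := \mathbf{1}_E - g$ (so that $\int h = 0$ and $\|h\|_\infty \le 1$), a direct calculation gives $\Lambda_\lambda(g,g,g) \gtrsim \varepsilon^3$ whenever $\lambda$ lies below an absolute constant.

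I would split $h = h^\flat + h^\sharp$ by an anisotropic Littlewood--Paley decomposition, with $\widehat{h^\flat}$ supported where $|\xi| \lesssim \lambda^{-1-\kappa}$ and $|\eta| \lesssim \lambda^{-2-2\kappa}$ for a small parameter $\kappa > 0$. Expanding the trilinear form over the three possibilities $g$, $h^\flat$, $h^\sharp$ in each slot produces $27$ terms. After the anisotropic rescaling $(x,y,t) \mapsto (\lambda x, \lambda^2 y, \lambda t)$, which is adapted to the pattern, Theorem~\ref{mainresult} combined by interpolation with the bound $\|h\|_\infty \le 1$ yields an error of size $\lesssim \lambda^{\delta}$ for every term featuring $h^\sharp$ in at least one slot ($19$ such terms), where $\delta = \delta(\kappa) > 0$.

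The remaining $7$ terms involve only $g$ and $h^\flat$, with $h^\flat$ in at least one slot; each is controlled by H\"older's inequality in terms of $\|h^\flat\|_{L^2(\R^2)}$. Hence either $\|h^\flat\|_{L^2}^2$ is small enough that $\Lambda_\lambda(\mathbf{1}_E, \mathbf{1}_E, \mathbf{1}_E) \gtrsim \varepsilon^3$ (so the pattern exists at scale $\lambda$), or $\|h^\flat\|_{L^2}^2 \gtrsim \varepsilon^A$ for some absolute $A > 0$. In the latter case, since $h^\flat$ is essentially constant on anisotropic rectangles of dimensions $\lambda^{1+\kappa} \times \lambda^{2+2\kappa}$, an averaging and pigeonhole argument produces such a rectangle $R$ on which $|E\cap R|/|R| \ge \varepsilon + c\varepsilon^{A'}$. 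The pattern $(x,y),(x+t,y),(x,y+t^2)$ is invariant under $(x,y,t) \mapsto (\rho x, \rho^2 y, \rho t)$, so rescaling $R$ to $[0,1]^2$ reduces the problem to itself with strictly larger density.

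Iterating this dichotomy yields the conclusion: after at most $O(\varepsilon^{-A'})$ steps the density exceeds $1$, a contradiction, so the pattern must have appeared at some earlier stage. The composition of scale reductions across iterations produces the stated double-exponential lower bound on $t$. The main obstacle is the quantitative execution of this iteration --- verifying that the rescaled problem at each step remains in a uniform class so that Theorem~\ref{mainresult} can be reapplied with constants independent of the iteration index, and tracking the cumulative scale reductions to obtain precisely the bound $t > \exp(-\exp(\varepsilon^{-C}))$ rather than something weaker.
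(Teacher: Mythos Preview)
Your strategy is a density-increment argument (pass to a sub-rectangle with higher density, rescale, repeat), whereas the paper follows Bourgain's $L^2$-exhaustion argument on $[0,1]^2$ itself: it never rescales, but instead decomposes only the function in the $(x,y+t^2)$ slot via convolutions $f\ast_2\vartheta_{k}$, obtains the dichotomy ``either the trilinear integral is large at scale $2^{-k'}$, or $\|f\ast_2\vartheta_{k''}-f\ast_2\vartheta_{k}\|_2+\|f\ast_1\vartheta_{k''}-f\ast_1\vartheta_{k}\|_2\gtrsim \varepsilon^4$,'' and then sums these $L^2$ increments using Plancherel orthogonality to cap the number of iterations. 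The lower bound for the ``main term'' in the paper is Lemma~\ref{lem:lowerbd}, a Cauchy--Schwarz inequality for $\int f(f\ast_1\vartheta_k)(f\ast_2\vartheta_l)$; your analogue $\Lambda_\lambda(g,g,g)\gtrsim\varepsilon^3$ is simpler because you separated out the constant-density part first.

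There is, however, a genuine gap in your sketch. You assert that Theorem~\ref{mainresult} bounds every one of the $19$ terms containing $h^\sharp$, but the smoothing in \eqref{eqn:local-gain1} is \emph{asymmetric}: in the paper's notation the gain is $\|f_1\|_{H^{(-\sigma,0)}}$ for the function at $(x+t,y)$ (decay only from high first-frequency) and $\|f_2\|_{H^{(0,-\sigma)}}$ for the function at $(x,y+t^2)$ (decay only from high second-frequency), with merely $\|f_3\|_{L^\infty}$ for the function at $(x,y)$. In your ordering this means: no gain whatsoever when $h^\sharp$ sits in your first slot; and if $h^\sharp$ sits in your second (resp.\ third) slot but happens to be high-frequency only in $\eta$ (resp.\ only in $\xi$), Theorem~\ref{mainresult} again gives nothing. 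Terms such as $\Lambda_\lambda(h^\sharp,g,g)$ or $\Lambda_\lambda(g,h^\sharp_{\text{high }\eta},g)$ are therefore not covered. The paper avoids this precisely by decomposing \emph{only} in the direction where Theorem~\ref{mainresult} bites---splitting $f$ at $(x,y+t^2)$ in the second frequency variable---and treating the complementary low-frequency piece by elementary means (mean-value approximation $I_4$, Cauchy--Schwarz $I_2,I_5$). Your $27$-term expansion with a two-directional cutoff for $h^\sharp$ produces cross-terms that need a separate argument you have not supplied; some of them can be rescued (e.g.\ $\Lambda_\lambda(h^\sharp,g,g)$ is small because $\int h^\sharp\approx 0$ and the effective weight differs from a constant only on an $O(\lambda)$-neighborhood of $\partial[0,1]^2$), but others, particularly those pairing a ``wrong-direction'' $h^\sharp$ with $h^\flat$, require real work. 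The obstacle you flag at the end (uniform constants across iterations) is secondary to this.
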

Observe that the corresponding result without the postulated lower bound on $t$ follows by an application of the Lebesgue density theorem. A  result with worse $\varepsilon$-dependence can also be deduced from a general theorem of Bergelson and Leibman \cite{BL96}. 

Note that the theorem recovers the following quantitative non-linear Roth theorem due to Bourgain \cite{Bou88}: for every $E\subset [0,1]$ with Lebesgue measure at least $\varepsilon$ there exist $t>\exp(-\exp(\varepsilon^{-c}))$ and $x$ with
\[ x, x+t, x+t^2 \in E. \]
This follows by applying Theorem \ref{thm:patterns} to the set $\widetilde{E}=\{(x,y)\in [0,1]^2\,:\,x-y\in E\}$.

\subsection{A trilinear smoothing inequality}
The next result lies at the heart of our analysis, powering the applications
indicated above.
Let $\zeta$  be a smooth function  with compact support
in $\R^2\times (\R^1\setminus\{0\})$.  
Consider the trilinear form
\begin{equation}\label{eqn:locformdef}
\Lambda(f_1,f_2,f_3)  = \int_{\R^3} f_1(x+t,y) f_2(x,y+t^2)f_3(x,y) \zeta(x,y,t) \,dx\,dy\,dt.
\end{equation}
\begin{thma} \label{mainresult} 
There exist constants $C>0$ and $\sigma>0$ so that for all test functions $f_1,f_2, f_3$,
\begin{equation}\label{eqn:local-gain1}
| \Lambda(f_1,f_2,f_3)|  \le C \|f_1\|_{H^{(-\sigma,0)}}
\|f_2\|_{H^{(0,-\sigma)}} \|f_3\|_{L^\infty},
\end{equation}
where the constant $C$ only depends on $\zeta$, and 
\[ \|f\|_{H^{(a,b)}}^2 = 
\int_{\reals^2} |\widehat{f}(\xi_1,\xi_2)|^2
\,(1+|\xi_1|^{2})^{\frac{a}2}\, (1+|\xi_2|^{2})^{\frac{b}2}\,
d\xi_1\,d\xi_2.\] 
\end{thma}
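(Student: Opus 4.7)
The plan is to pass to the Fourier side and use van der Corput's inequality in the $t$-integration to extract the smoothing, then reassemble via Littlewood--Paley. Absorb $f_3$ into $g(x,y,t)=f_3(x,y)\zeta(x,y,t)$, substitute Fourier inversion for $f_1$ and $f_2$, and integrate first in $(x,y)$ to obtain
\[
\Lambda(f_1,f_2,f_3)=\int\hat f_1(\xi)\,\hat f_2(\eta)\,K(\xi,\eta)\,d\xi\,d\eta,
\]
with
\[
K(\xi,\eta)=\int \widehat g^{(x,y)}(-\xi_1-\eta_1,-\xi_2-\eta_2,t)\,e^{2\pi i(\xi_1 t+\eta_2 t^2)}\,dt,
\]
where $\widehat g^{(x,y)}$ denotes partial Fourier transform in the spatial variables. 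The $t$-phase has second derivative $2\eta_2$, independent of $\xi_1$; this is the sole source of smoothing. Van der Corput gives $|K|\lesssim (1+|\eta_2|)^{-1/2}$ on $|\xi_1|\lesssim|\eta_2|$, with rapid decay in $|\xi_1|$ otherwise by integration by parts (using that $\widehat g^{(x,y)}(a,b,\cdot)$ is smooth in $t$ with $C^k$-bounds uniform in $(a,b)$, coming only from $\zeta$ and $\|f_3\|_\infty$).

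Next, decompose $f_1=\sum_N P_N^{(1)}f_1$ dyadically in the $\xi_1$-variable and $f_2=\sum_M P_M^{(2)}f_2$ in the $\eta_2$-variable. The Sobolev norms factor as $\|f_1\|_{H^{(-\sigma,0)}}^2\sim\sum_N N^{-2\sigma}\|P_N^{(1)}f_1\|_2^2$ and analogously for $f_2$. It therefore suffices to prove a single-scale estimate
\[
|\Lambda(f_1^N,f_2^M,f_3)|\le C\,a(N,M)\,\|f_1^N\|_2\,\|f_2^M\|_2\,\|f_3\|_\infty
\]
with $a(N,M)$ summable against $N^{\sigma}M^{\sigma}$. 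If $N\gg M$, repeated integration by parts in $t$ produces arbitrary polynomial decay in $N$, handled by any positive $\sigma$. If $N\lesssim M$, change variables to $(a,b)=(-\xi_1-\eta_1,-\xi_2-\eta_2)$ so that the companion frequencies coincide with the partial Fourier variables of $g$; Cauchy--Schwarz in $(a,b)$ combined with Plancherel in $(x,y)$, together with the Plancherel identity in $t$
\[
\int|K(\xi,\eta)|^2\,d\xi_1 = \int|\widehat g^{(x,y)}(a,b,t)|^2\,dt,
\]
should deliver $a(N,M)\lesssim M^{-1/2}$, enough to conclude for some $\sigma<1/4$.

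The main obstacle is to avoid a loss of the factor $(NM)^{1/2}$ that would be incurred by a naive Cauchy--Schwarz in the driving variables: since the Sobolev norms control only $L^2$-in-$(\xi_1,\xi_2)$ and $L^2$-in-$(\eta_1,\eta_2)$, bounding $\int_{|\xi_1|\sim N}\|\widehat{f_1^N}(\xi_1,\cdot)\|_2\,d\xi_1$ by $N^{1/2}\|f_1^N\|_2$ would cancel the van der Corput gain. The resolution should be a bipartite $TT^*$ organization in which the outer Cauchy--Schwarz is taken in the triple $(\xi_1,a,b)$ and the $\xi_1$-volume is absorbed through the Plancherel identity in $t$ displayed above rather than paid as a multiplicative factor. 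This, together with an anisotropic Littlewood--Paley orthogonality exploiting the asymmetric roles of $\xi_1$ (for $f_1$) and $\eta_2$ (for $f_2$), is what I expect to constitute the two additional ingredients beyond the framework of \cite{Chr20} that the authors allude to in the introduction.
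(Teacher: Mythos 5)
Your Fourier representation of $\Lambda$ and the van der Corput observation are correct, and the Plancherel identity you write down for $K$ is true, but the proposed Cauchy--Schwarz/Plancherel scheme cannot close; it \emph{loses} exactly the factor $M^{1/2}$ you need to gain. Concretely, with $g=f_3\zeta$ and coordinates $(\xi_1,\eta_2,a,b)$ where $a=-\xi_1-\eta_1$, $b=-\xi_2-\eta_2$, you have
$\int|K|^2\,d\xi_1=\int|\widehat g^{(x,y)}(a,b,t)|^2\,dt$, but the right-hand side is $\Theta(1)$, not $O(M^{-1})$: $\widehat g^{(x,y)}(a,b,\cdot)$ is supported in a fixed compact $t$-interval and has $L^2_{a,b,t}$ norm $\lesssim\|f_3\|_\infty$ with no smallness. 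Cauchy--Schwarz in $(\xi_1,a,b)$ for fixed $\eta_2$ therefore yields, on each fiber, $\|f_1\|_2\,\|\widehat f_2(\cdot,\eta_2)\|_2\,\|f_3\|_\infty$, and the subsequent $\eta_2$-integration over $|\eta_2|\sim M$ contributes an extra $M^{1/2}\|f_2\|_2$; the result is $|\Lambda|\lesssim M^{1/2}\|f_1\|_2\|f_2\|_2\|f_3\|_\infty$, a loss. Including $\eta_2$ in the outer Cauchy--Schwarz does no better: $\int|K|^2\,d\xi_1\,da\,db$ is $\eta_2$-independent, so the $\eta_2$-support again produces $M$. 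The pointwise van der Corput bound $|K|\lesssim M^{-1/2}$ and the $L^2$-Plancherel identity are both available, but there is no Schur- or $TT^*$-type way to interpolate them that avoids paying the full $\eta_2$-measure, because $K$ oscillates but has no size decay in $\eta_2$. The ``bipartite $TT^*$ in $(\xi_1,a,b)$'' you gesture at is, as written, the calculation above; it does not work, and nothing in the paper resembles it.

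The paper's actual proof is structurally different and markedly harder, and in particular cannot be reproduced by global Fourier analysis. It first proves a decay estimate with $L^\infty$ norms on $f_1,f_2$ (and only then interpolates with $L^{3/2}\times L^{3/2}\to L^1$ from parabola averages to get the $L^2$-based statement). The Cauchy--Schwarz is applied in physical space on a $\lambda^{-\gamma}$-cube decomposition, eliminating $f_3$ and producing the difference operators $\md_s^{(1)}f_1$, $\md_{2s\bar t}^{(2)}f_2$. Local Fourier series then reduce matters to controlling an $\ell^2$ sum of coefficients along (cosets of) a rank-$1$ subgroup of $\Z^2$. The paper explicitly warns that this Cauchy--Schwarz can ``sacrifice essential information'' and introduces two genuinely new ingredients to handle that: an inverse theorem (Lemma \ref{lemma:sharpflat}) decomposing each $f_\ell$ fiberwise into a ``flat'' part, for which the bilinearized estimate succeeds, and a ``sharp'' part of the form $\sum_n h_n(x,y)e^{i\alpha_{n,m}(y)x}$ with \emph{measurable} frequency functions $\alpha_{n,m}$; and a sublevel set estimate (Lemma \ref{lemma:sublevel}) controlling $|\{(x,y,t):|\alpha(x+t,y)-2t\beta(x,y+t^2)|\le\eps\}|$ uniformly over all measurable $\alpha,\beta$. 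The enemy is precisely an $f_1$ each of whose horizontal fibers is nearly a single plane wave $e^{i\alpha(y)x}$ at a $y$-dependent frequency, and likewise for $f_2$; such examples spread $\widehat{f_1}$ across the whole annulus $\{|\xi_1|\sim\lambda\}$ and are invisible to a Littlewood--Paley decomposition in $\xi_1$, so no version of your argument can detect them. These two ingredients, not a $TT^*$ reorganization, are what the authors allude to in the introduction.
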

 Theorem~\ref{mainresult} is related to an inequality of Bourgain \cite[(2.4)]{Bou88} roughly in the same way as Theorem \ref{thm:singint} is related to Li's theorem \cite[Theorem 1.1]{Li13}.
The inequality \eqref{eqn:local-gain1} is equivalent to the estimate
\[|\Lambda(f_1,f_2,f_3)| \le C \lambda^{-\sigma}  \|f_1\|_{L^2}
\|f_2\|_{L^{2}}\|f_3\|_{L^\infty}\]
valid for all $\lambda\ge 1$ under the assumption that $\widehat{f_j}$ is supported
where $|\xi_j|\asymp\lambda$ for at least one index $j=1,2$.

The trilinear form $\Lambda$ has a rich symmetry structure,
related to the symmetry structure of $T_m$ and \eqref{tht}. Indeed,
for any unimodular functions $\varphi,\psi:\R\to\mathbb{C}$,
$$\Lambda(f_1,f_2,f_3) = \Lambda(\varphi(y)f_1,\psi(x)f_2,\overline{\varphi(y)\psi(x)}f_3),$$
where $\varphi(y)f_1$ indicates the function $(x,y)\mapsto \varphi(y)f_1(x,y)$,
and so on. Thus there is no valid version of \eqref{eqn:local-gain1}
involving $\norm{f_1}_{H^{(0,-\sigma)}}$ or
$\norm{f_2}_{H^{(-\sigma,0)}}$. Nor is there a formulation 
involving a negative order norm of $f_3$.

The proof of Theorem \ref{mainresult} begins with  microlocal
 decompositions, as in \cite{Chr20}.
Two issues arise in the proof  
that were not encountered in the precursor work \cite{Chr20}. 
Firstly, the trilinear form that arises here is more singular
than those studied in \cite{Chr20}. To compensate, one is led to eliminate
one of the three functions, reducing matters 
by a standard Cauchy-Schwarz--type manipulation to a bilinear form.
There are situations in which this manipulation sacrifices
essential information, as a consequence of which no satisfactory bound is obtained.
To deal with these situations requires an inverse theorem and a resulting decomposition, which
recoups useful information in usable form.
Thus two structured cases arise in the analysis, one bilinear and one trilinear.

Secondly, Roth's archetypal proof \cite{Rot53} of the existence of arithmetic progressions of length three
in subsets of the integers with positive upper density
exploits a lower bound for $\sup_\xi |\widehat{f}(\xi)|$
in terms of a positive function of the density. 
In the proof of Theorem~\ref{mainresult} an analogous step arises, 
but $f$ is a periodic function of two real variables,
and ({\em grosso modo}) a lower bound is obtained for $\norm{\widehat{f}}_{\ell^2(\Gamma)}$
for a certain linear graph $\integers^2\supset\Gamma=\{(k_1,k_2): k_2=tk_1\}$.

This analysis leads to a sublevel set inequality
 in the spirit of \cite{Chr20}, the proof of which involves further technical difficulties.

\subsection*{Structure of the paper}
\begin{itemize}
\item In \S \ref{sec:lwpaley}, \S \ref{sec:high-red}, \S \ref{sec:low-red}, \S \ref{sec:mixed-red} we prove Theorem \ref{thm:singint} using Theorem \ref{thm:anisotp} and Theorem \ref{mainresult}. %

\item In \S \ref{sec:max-red} we modify these reductions to deduce Theorem \ref{thm:maxfct} from Theorem \ref{mainresult}.

\item In \S \ref{sec:localop} we prove the trilinear smoothing   inequality of Theorem \ref{mainresult}. The inverse theorem and a resulting decomposition are
developed in \S \ref{section:structure} and \S \ref{sec:local-applystructure}, respectively. 
The sublevel set inequality
 is analyzed
in \S \ref{sec:sublevel}.

\item In \S \ref{sec:anisotp} we give the proof of Theorem \ref{thm:anisotp}, adapting ideas from \cite{Kov12}, \cite{Ber12}, \cite{Dur14}. %

\item In \S \ref{sec:patterns} we deduce Theorem \ref{thm:patterns} from Theorem \ref{mainresult} following Bourgain \cite{Bou88}. 

\item In \S \ref{sec:bht}, \S \ref{sec:sw} we show how Theorem \ref{thm:singint} implies bounds for the operators \eqref{eqn:bht0}, \eqref{eqn:sw0}.

\item In \S \ref{section:openproblems} we indicate several open problems.
\end{itemize}

\subsection*{Notation} Fourier transforms will be denoted by $\widehat{f}(\xi) = \int_{\R^d} f(x) e^{-2\pi i x\cdot \xi} dx$. 
The letters $c,C$ are reserved for constants that may change from line to line. Dependence of the constants on various parameters will be understood from context or made explicit by appropriate subscripts. 
We write $A\lesssim B$ to denote existence of a constant $C$ such that $A\le C\cdot B$ and $A\gtrsim B$ similarly. The notation $A\asymp B$ signifies that $A\lesssim B$ and $A\gtrsim B$. In addition we use the notation $A=B+O(X)$ to denote that $|A-B|\lesssim X$. For a function $f$ on $\R^2$ and a function $\varphi$ on $\R$ we define the partial convolutions
\[(f*_1 \varphi) (x,y) = \int_\R f(x-u, y) \varphi(u) du,\quad (f*_2 \varphi)(x,y) = \int_\R f(x,y-u) \varphi(u) du .\]
For a measurable set $E\subset \R^d$ we denote the $d$-dimensional Lebesgue measure of $E$ by $|E|$ and make no reference of the dimension in the notation.
$L^p$ norms will be denoted by $\|f\|_p$, $\|f\|_{L^p}$, or $\|f\|_{L^p(\R^d)}$, depending on context.

\section{Preliminary reductions}\label{sec:prelim}
In this section we reduce the proof of Theorem \ref{thm:singint} to those of Theorems \ref{mainresult} and \ref{thm:anisotp}. In \S \ref{sec:max-red} we give a variant of this reduction which also gives us Theorem \ref{thm:maxfct} (which only uses Theorem \ref{mainresult}, but not Theorem \ref{thm:anisotp}). 

\subsection{Littlewood--Paley decompositions}\label{sec:lwpaley}
Let $\varphi$ be a smooth even function on $\R$ supported in the set $\{|\zeta|\le 2\}$ which is equal to one on $\{ |\zeta|\le 1\}$ and satisfies $0\le \varphi\le 1$. Let $\psi(\zeta)=\varphi(\zeta)-\varphi(2\zeta)$ and for $j\in \Z$ let $\varphi_j(\zeta) = \varphi(2^{-j} \zeta)\;\text{and}\;\psi_j(\zeta) =\psi(2^{-j} \zeta)$. Then $\psi_j$ is supported in $\{2^{j-1}\le |\zeta|\le 2^{j+1}\}$ and $\sum_{j\in\Z} \psi_j(\zeta) = 1$ holds for all $\zeta\not=0$. We define the partial Littlewood--Paley operators $\Delta^{(\ell)}_j$ and corresponding partial sums $S_j^{(\ell)}$ by
\[ \widehat{\Delta^{(\ell)}_j f} (\xi) = \psi_j(\xi_\ell) \widehat{f}(\xi),\quad \widehat{S^{(\ell)}_j f} (\xi) = \varphi_j(\xi_\ell) \widehat{f}(\xi), \]
where $\ell=1,2,3$, $j\in\Z$, $\xi=(\xi_1,\xi_2)\in\R^2$. We shall also make use of a second set of Littlewood--Paley operators $\widetilde{\Delta}^{(\ell)}_j$ associated with multipliers $\xi\mapsto \widetilde{\psi}_j(\xi_\ell)$ chosen so that $\psi_j \widetilde{\psi}_j = \psi_j$, say $\widetilde{\psi}_j(\zeta) = \widetilde{\psi}(2^{-j} \zeta),$ where $\widetilde{\psi}$ is a smooth function that equals one on the support of $\psi$, but is supported on, say, a $\tfrac1{100}$-neighborhood of the support of $\psi$.
Decompose $T=\sum_{j\in\Z} T_j$, where
\[ T_j(f_1, f_2)(x,y) =  \int_{\R} f_1(x+t, y) f_2(x, y+t^2) \psi(2^{j} t) t^{-1} dt. \]
Standard stationary phase considerations motivate the following further decomposition in frequency as
\begin{equation}\label{eqn:basicfreqdecomp}
T = T^{\mathrm{L}} + T^{\mathrm{M}} + T^{\mathrm{H}},
\end{equation}
into components representing the {\em low} (L), {\em mixed} (M) and {\em high} (H) frequency contributions, respectively. Here we write for $\omega\in\{\mathrm{L},\mathrm{M},\mathrm{H}\},$
\[ T^\omega = \sum_{j\in\Z} T^\omega_j,\quad T_j^\omega (f_1, f_2) = \sum_{k \in \mathfrak{F}_\omega} T_j(\Delta^{(1)}_{j+k_1} f_1, \Delta^{(2)}_{2j+k_2} f_2),\]
where $k=(k_1,k_2)\in\Z^2=\mathfrak{F}_\mathrm{L} \cup \mathfrak{F}_\mathrm{M}\cup \mathfrak{F}_\mathrm{H}$ with
\begin{align}
 \mathfrak{F}_\mathrm{L} &= \{ k\in\Z^2\,:\, \max(k_1,k_2)\le 0 \}, \nonumber\\
 \mathfrak{F}_\mathrm{M} &= \{ k\in\Z^2\,:\, \max(k_1,k_2)>0,\,|k_1-k_2|>100 \}, \label{eqn:basic-freq-dec}\\
 \mathfrak{F}_\mathrm{H} &= \{ k\in\Z^2\,:\, \max(k_1,k_2)>0,\,|k_1-k_2|\le 100 \}\nonumber.
\end{align}

The low frequency component $T^\mathrm{L}$ is not oscillatory and can be viewed as a more well-behaved bilinear singular integral, more specifically an anisotropic variant of the twisted paraproduct \cite{Kov12}. This reduction will be done in \S \ref{sec:low-red}. In the mixed frequency component $T^\mathrm{M}$ we make use of rapid decay stemming from a non-stationary phase to similarly reduce to an anisotropic variant of the twisted paraproduct, see \S \ref{sec:mixed-red}. Finally, the high frequency component $T^\mathrm{H}$ requires a more intricate analysis. The key estimate is given in Theorem \ref{mainresult}. In \S \ref{sec:high-red} we will detail the reduction of the high frequency component to Theorem \ref{mainresult}.

\subsection{Shifted maximal function}\label{sec:shiftedmaxfct}
Denote by $\mathscr{M}_{\sigma}$  the shifted (dyadic) maximal function 
\[\mathscr{M}_\sigma g(x) = \sup_{s\in\Z}  {2^{-s}} \int_{[\sigma 2^s, (\sigma+1)2^s]} |g(x+t)| dt\quad (x\in\R), \]
where $\sigma\in\R$.
It is well-known \cite[p. 78]{Ste93} that 
\begin{equation}\label{eqn:shiftedmaxfct-basic}
\|\mathscr{M}_\sigma g\|_p\lesssim \log(2+|\sigma|)^{1/p} \|g\|_p
\end{equation}
for all $1<p\leq \infty$. Similarly, from a variant of the Fefferman--Stein inequality we also have
\begin{equation}\label{eqn:shiftedfs}
\Big\| \Big(\sum_{j\in\Z} (\mathscr{M}_\sigma g_j)^2 \Big)^{1/2} \Big\|_p \lesssim \log(2+|\sigma|)^2 \Big\| \Big(\sum_{j\in\Z} |g_j|^2 \Big)^{1/2}\Big\|_p
\end{equation}
for all $p\in (1,\infty)$ (for a proof of this inequality, see \cite[Theorem 3.1]{GHLR17}). The following lemma provides a pointwise domination for each single scale piece $T_0$ in terms of the shifted maximal function.
 \begin{lemma}\label{lem:shiftedmaxfct} Let $\kappa$ be a positive integer. Then we have the pointwise estimate
\begin{equation}\label{eqn:shiftedmaxfct-main}
|T_0(f_1,\Delta_{\kappa}^{(2)}f_2)| \lesssim \sum_{\iota \in \mathcal{I}} a_\iota (\mathscr{M}^{(1)}_{\sigma_{1,\iota}} f_1) (\mathscr{M}^{(2)}_{\sigma_{2,\iota}}f_2) 
\end{equation}
with $\mathcal{I}$ a countable set and $a_\iota>0$, $\sigma_{\ell,\iota}\in\R$ satisfying \begin{equation}\label{eqn:shiftedmaxfct-coeff}
\sum_{\iota\in\mathcal{I}} a_\iota \log(2+|\sigma_{1,\iota}|)^a\log(2+|\sigma_{2,\iota}|)^b \lesssim \kappa^{a+b}
\end{equation}
for every $a,b>0$.
 \end{lemma}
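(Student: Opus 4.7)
The approach is to expand the convolution kernel of $\Delta_\kappa^{(2)}$ in the second variable into pieces supported on intervals of length $2^{-\kappa}$ and simultaneously discretize the $t$-integration at the same scale. This scale is natural because on the support of $\psi$, where $|t|\asymp 1$, the map $t\mapsto t^2$ has Jacobian $\asymp 1$ and therefore preserves dyadic scales. Concretely, write $K_\kappa=\sum_n K_{\kappa,n}$ with $K_{\kappa,n}=K_\kappa\mathbf{1}_{I_n}$ for $I_n=[n2^{-\kappa},(n+1)2^{-\kappa}]$; Schwartz decay of $\check{\psi}$ then yields $\|K_{\kappa,n}\|_\infty\lesssim 2^\kappa(1+|n|)^{-N}$ for any large $N$. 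Partition the positive half of $\mathrm{supp}\,\psi$ into intervals $T_k$ defined by $t\in T_k\Leftrightarrow t^2\in[k2^{-\kappa},(k+1)2^{-\kappa}]$; then $|T_k|\asymp 2^{-\kappa}$, each $T_k$ lies at distance $\asymp\sqrt{k 2^{-\kappa}}\asymp 1$ from the origin, and only $\asymp 2^\kappa$ values of $k$ contribute (with the negative part of $\mathrm{supp}\,\psi$ handled identically after reflection).

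For each pair $(k,n)$, the quantity $y+t^2-u$ is confined to an interval of length $\asymp 2^{-\kappa}$ centered near $y+(k-n)2^{-\kappa}$ whenever $t\in T_k$ and $u\in I_n$. This yields
\[ \int_{I_n}|K_{\kappa,n}(u)|\,|f_2(x,y+t^2-u)|\,du \;\lesssim\; (1+|n|)^{-N}\, \mathscr{M}^{(2)}_{\sigma_{2,k,n}}f_2(x,y), \]
with $|\sigma_{2,k,n}|\asymp|k-n|$. Since $x+T_k$ is an interval in the first variable of length $\asymp 2^{-\kappa}$ shifted from $x$ by $\asymp\sqrt{k 2^{-\kappa}}$,
\[ \int_{T_k}|f_1(x+t,y)|\,dt\;\lesssim\; 2^{-\kappa}\, \mathscr{M}^{(1)}_{\sigma_{1,k}}f_1(x,y),\qquad |\sigma_{1,k}|\asymp \sqrt{k\cdot 2^\kappa}. \]
Multiplying these bounds, integrating over $T_k$, and summing over $(n,k)$ produces \eqref{eqn:shiftedmaxfct-main} with indices $\iota=(k,n)\in\Z^2$ and coefficients $a_\iota\asymp 2^{-\kappa}(1+|n|)^{-N}$.

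To verify \eqref{eqn:shiftedmaxfct-coeff}, note that for $k\asymp 2^\kappa$ one has $|\sigma_{1,k}|\asymp 2^\kappa$, so $\log(2+|\sigma_{1,k}|)^a\asymp\kappa^a$ uniformly. For each such $k$, splitting the $n$-sum at $|n|\asymp |k|/2$ and exploiting the rapid decay $(1+|n|)^{-N}$ gives $\sum_n (1+|n|)^{-N}\log(2+|k-n|)^b\lesssim\log(2+|k|)^b\asymp\kappa^b$. Summing over the $\asymp 2^\kappa$ contributing $k$ furnishes a factor of $2^\kappa$ which is exactly cancelled by the prefactor $2^{-\kappa}$ in $a_\iota$, yielding the desired bound $\kappa^{a+b}$. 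The principal point requiring care is the geometric bookkeeping: one must confirm that $|T_k|\asymp 2^{-\kappa}$ uniformly over the contributing $k$ (using $\sqrt{k 2^{-\kappa}}\asymp 1$), so that the cancellation between $2^{-\kappa}$ and the cardinality of $k$ is exact. This is routine given that $t^2\in[1/4,4]$ throughout the support of $\psi$, and no genuine obstruction arises.
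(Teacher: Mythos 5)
Your proof is correct and follows essentially the same strategy as the paper's: decompose the convolution kernel of $\Delta_\kappa^{(2)}$ into $2^{-\kappa}$-scale pieces, discretize the $t$-integration at the same scale, and dominate each piece by a product of shifted maximal functions with coefficients controlled by the Schwartz decay. The only (cosmetic) difference is in how you slice $t$: the paper slices $t$ directly into dyadic intervals $I_l$ of length $2^{-\kappa}$ and ends up with the second shift parameter $\sigma_{l,n}$ involving $l^2 2^{-\kappa}$, whereas you slice $t$ by pulling back dyadic intervals through $t\mapsto t^2$ (valid since the Jacobian is $\asymp 1$ on $\mathrm{supp}\,\psi$), giving the slightly cleaner shift $|k-n|$; the two parameterizations are related by $k\asymp l^2 2^{-\kappa}$ and produce the same bound.
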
 
By the dilation symmetry, the estimate \eqref{eqn:shiftedmaxfct-main} implies that for every $j,k_1\in\Z$ and $\kappa=k_2\ge 1$,
 \begin{equation}\label{eqn:shiftedmaxfct-j}
|T_j(\Delta_{j+k_1}^{(1)} f_1,\Delta_{2j+k}^{(2)}f_2)| \lesssim \sum_{\iota \in \mathcal{I}} a_\iota (\mathscr{M}^{(1)}_{\sigma_{1,\iota}} \Delta_{j+k_1}^{(1)} f_1) (\mathscr{M}^{(2)}_{\sigma_{2,\iota}}f_2)     
 \end{equation} 
Indeed,
\[ T_j(\Delta_{j+k_1}^{(1)} f_1,\Delta_{2j+k_2}^{(2)}f_2) = D_{(2^j,2^{2j})} T_0 ( D_{(2^{-j},2^{-2j})} \Delta_{j+k_1}^{(1)} f_1, \Delta^{(2)}_{k_2} D_{(2^{-j}, 2^{-2j})} f_2) \]
and $D_{2^j} \mathscr{M} D_{2^{-j}} = \mathscr{M}$, where $D_{(a,b)} f(x,y) = f(ax, by)$ and $D_a g(x) = g(ax)$.

 \begin{proof}[Proof of Lemma \ref{lem:shiftedmaxfct}]
We expand 
\begin{equation}
    \label{T0shift}
    T_0(f_1,\Delta_{\kappa}^{(2)}f_2)(x,y) = \int_{\R^2} f_1(x+t,y)f_2(x,y+t^2-s) (\psi_{\kappa})^\vee (s) \psi(t)\,ds\,dt
\end{equation}
For   an integer $l$ we denote the   dyadic interval
\[I_{l}=[ l2^{-\kappa}, (l+1)2^{-\kappa}] = [0,2^{-\kappa}] + l2^{-\kappa} \]
Splitting the integration in $t$ over these intervals and using rapid decay of $\widecheck{\psi}$ we   estimate \eqref{T0shift} by 
\[
    \sum_{n\in \Z}(1+|n|)^{-N}   2^{-\kappa}\sum_{|l|\le 2^{\kappa+1}}   2^{2\kappa} \int_{I_{l}} \int_{2^{-\kappa}n}^{2^{-\kappa}(n+1)} |f_1|(x+t,y)|f_2|(x,y+t^2-s)\,ds\,dt \]
Suppose that $0< l\leq 2^{\kappa+1}$.
For fixed $n\in \Z$ and $l>0$, the summand can be written as 
\begin{equation}
      \label{est:m0}
   2^{2\kappa} \int_{I_{l}} \int_{2^{-\kappa}n-t^2}^{2^{-\kappa}(n+1)-t^2} |f_1|(x+t,y)|f_2|(x,y-s)\,ds\,dt
\end{equation}
The range of integration in $s$ is contained in 
\[J_l = [2^{-\kappa}n + l^22^{-2\kappa}, 2^{-\kappa}(n+1) + l^2 2^{-2\kappa}+2^{-\kappa+3} ] = [0,2^{\kappa-3}] + \sigma_{l,n}2^{\kappa-3} \]
where $\sigma_{l,n} =2^{-3}n + l^22^{-\kappa+3}  $.  %
  Thus, \eqref{est:m0} is bounded up to a constant multiple by
  \[      {|I_l|^{-1}}  {|J_l|^{-1}} \int_{I_{l}} \int_{J_l} |f_1|(x+t,y)|f_2|(x,y-s)\,ds\,dt  \]
If $-2^{k+1}\le \ell\le 0$ we proceed similarly, defining $\sigma_{l,n}$ so that
\begin{equation}
    \label{m0:finalbd}
 \eqref{T0shift}  \lesssim \sum_{n\in \Z}  (1+|n|)^{-N}   2^{-\kappa}\sum_{|l|\leq 2^{\kappa+1}}   \mathscr{M}_l^{(1)} f_1(x,y) \mathscr{M}^{(2)}_{\sigma_{l,n}}f_2(x,y).
\end{equation}
This gives the   desired estimate for $T_0$. \end{proof}

\subsection{High frequencies}\label{sec:high-red}
First we claim that it suffices to prove that 
there exists $c>0$ such that 
\begin{equation}\label{eqn:HHT0}
\| T_j ( \Delta^{(1)}_{j+k_1} f_1, \Delta^{(2)}_{2j+k_2} f_2) \|_{1} \lesssim 2^{-\delta |k|} \|f_1\|_{2} \|f_2\|_{2}
\end{equation}
for all $k=(k_1,k_2)\in\mathfrak{F}_\mathrm{H}$ and all $j\in\Z$. 
Note that \eqref{eqn:HHT0} holds trivially for $\delta=0$.  The reader may for simplicity assume that $k_1=k_2\ge 1$, but we will not make this assumption in the text. 

To verify the claim let us assume that \eqref{eqn:HHT0} holds. 
Define
\[ T^{(k)} (f_1,f_2)= \sum_{j\in\Z} T_j(\Delta^{(1)}_{j+k_1}  f_1, \Delta^{(2)}_{2j+k_2} f_2). \]
First we use \eqref{eqn:HHT0} to estimate for each $k\in \mathfrak{F}_\mathrm{H}$,
\begin{equation}\label{eqn:Tk-l2-decay}
\|T^{(k)} (f_1,f_2)\|_{1} \lesssim 2^{-c|k|} \sum_{j\in\Z}\|\widetilde{\Delta}^{(1)}_{j+k_1}  f_1\|_{2}\|\widetilde{\Delta}^{(2)}_{2j+k_2} f_2\|_{2} \lesssim 2^{-\delta |k|} \|f_1\|_{2}\|  f_2\|_{2}.
\end{equation}
where $\Delta^{(\ell)}_j \widetilde{\Delta}^{(\ell)}_j = \Delta^{(\ell)}_j$ and we have used the Cauchy--Schwarz inequality and Plancherel's theorem to treat the sum over $j$. 

Next, we use Lemma \ref{lem:shiftedmaxfct} (in the form of its consequence, \eqref{eqn:shiftedmaxfct-j}) to estimate for $p,q\in (1,\infty)$, $r\in [1,\infty)$, $p^{-1}+q^{-1}=r^{-1}$,
\[ \| T^{(k)} (f_1,f_2)\|_{r} \lesssim \sum_{\iota \in \mathcal{I}} a_\iota \Big\| \sum_{j\in\Z} (\mathscr{M}^{(1)}_{\sigma_{1,\iota}} \Delta^{(1)}_{j+k_1} f_1) (\mathscr{M}^{(2)}_{\sigma_{2,\iota}} \widetilde{\Delta}^{(2)}_{2j+k_2}f_2) \Big\|_r.
\]
By the Cauchy--Schwarz inequality applied to the summation in $j$ and H\"older's inequality the previous is
\[\le \sum_{\iota \in \mathcal{I}} a_\iota \Big\| \Big(\sum_{j\in\Z} (\mathscr{M}^{(1)}_{\sigma_{1,\iota}} \Delta^{(1)}_{j+k_1} f_1)^2 \Big)^{1/2}\Big\|_p \Big\|\Big(\sum_{j\in\Z}(\mathscr{M}^{(2)}_{\sigma_{2,\iota}} \widetilde{\Delta}^{(2)}_{2j+k_2}f_2)^2\Big)^{1/2} \Big\|_q. \]
Since $p,q\in (1,\infty)$, by \eqref{eqn:shiftedfs} and Littlewood--Paley theory, the previous is 
\[\lesssim \log(2+\sigma_{1,\iota})^2\log(2+\sigma_{2,\iota})^2 \|f_1\|_p \|f_2\|_q. \]
Consequently,
\[ \|T^{(k)}(f_1,f_2)\|_r \lesssim  \sum_{\iota \in \mathcal{I}} a_\iota \log(2+\sigma_{1,\iota})^{2}\log(2+\sigma_{2,\iota})^{2}  \|f_1\|_p \|f_2\|_q \lesssim |k|^{4} \|f_1\|_p \|f_2\|_q. \]
By interpolation with \eqref{eqn:Tk-l2-decay} we therefore obtain
\[\|T^{(k)} (f_1,f_2)\|_{r} \lesssim 2^{-\delta_{p,q}|k|} \|f_1\|_p \|f_2\|_q \]
for all $p,q\in (1,\infty)$, $r\in [1,\infty)$ with $p^{-1}+q^{-1}=r^{-1}$. Summing over $k\in \mathfrak{F}_\mathrm{H}$ yields the claim.

It remains to prove \eqref{eqn:HHT0}. By a scaling argument it suffices to prove this estimate for $j=0$. Indeed, 
we have
\[ T_j(\Delta^{(1)}_{j+k_1} f_1, \Delta^{(2)}_{2j+k_2} f_2) = D_{(2^j,2^{2j})} T_0(\Delta^{(1)}_{k_1} D_{(2^{-j},2^{-2j})} f_1, \Delta^{(2)}_{k_2} D_{(2^{-j},2^{-2j})} f_2).\]
To apply Theorem \ref{mainresult} we need to perform an additional spatial localization. Choose a smooth non-negative function $\eta$ on $\R^2$ that is supported in a small neighborhood of $[-\tfrac12,\tfrac12]^2$ so that $\sum_{m\in\Z^2} \eta_m = 1$,
where $\eta_m(z) = \eta(z-m)$.
Then 
\[  \| T_0 (\Delta_{k_1}^{(1)} f_1, \Delta_{k_2}^{(2)} f_2 )\|_{L^1(\R^2)} \le \sum_{m\in \Z^2} \int_{\R^2} |T_0 (\Delta_{k_1}^{(1)} f_1, \Delta_{k_2}^{(2)} f_2 )(x,y)|\eta_m(x,y) d(x,y)    \]
Expanding the definition of
\[ T_0 (\Delta_{k_1}^{(1)} f_1, \Delta_{k_2}^{(2)} f_2 )(x,y) \eta_m(x,y)  \]
we obtain the expression
\[\int_{\R}  \widetilde{\eta}_m(x+t,y) \Delta^{(1)}_{k_1} f_1(x+t,y) \widetilde{\eta}_m(x,y+t^2) \Delta^{(2)}_{k_2} f_2(x,y+t^2) \eta_m(x,y) \psi(t) t^{-1} dt,\]
where we have inserted the factors $\widetilde{\eta}_m(x+t,y)$ and $\widetilde{\eta}_m(x,y+t^2)$ which are arranged to equal one for $(x,y,t)$ in the support of $\eta_m(x,y)\psi(t)$. Here the function $\widetilde{\eta}_m$ is an appropriately chosen smooth non-negative function with compact support such that $\|\widetilde{\eta}_m\|_{C^1}\lesssim 1$ and $\sum_{m\in\Z^2} \widetilde{\eta}_m\lesssim 1$.
Define a local operator
\[ \LocTOp (f_1, f_2)(x,y) = \int_\R f_1 (x+t,y) f_2(x,y+t^2) \zeta(x,y,t) dt, \]
where $\zeta$ is a smooth function compactly supported in $\R^2\times (\R\setminus\{0\})$ that we suppress in the notation $\LocTOp(f_1,f_2)$ and which may change from one occurrence of the notation to the next. With this in mind,
\[ \| T_0 (\Delta_{k_1}^{(1)} f_1, \Delta_{k_2}^{(2)} f_2 )\|_{L^1(\R^2)} \le \sum_{m\in\Z^2} \int_{\R^2} |\LocTOp(\widetilde{\eta}_m \Delta^{(1)}_{m} f_1, \widetilde{\eta}_m \Delta^{(2)}_{m} f_2)|  \leq \mathrm{I} + \mathrm{II} + \mathrm{III},\]
where 
\begin{align*}
\mathrm{I} &= \sum_{m\in\Z^2} \int_{\R^2} |\LocTOp(\Delta^{(1)}_{k_1} (\widetilde{\eta}_m f_1), \Delta^{(2)}_{k_2} (\widetilde{\eta}_m f_2)) |,\\
\mathrm{II} &= \sum_{m\in\Z^2} \int_{\R^2} |\LocTOp(\widetilde{\eta}_m \Delta^{(1)}_{k_1} f_1 - \Delta^{(1)}_{k_1} (\widetilde{\eta}_m f_1) , \widetilde{\eta}_m \Delta^{(2)}_{k_2} f_2) \eta_m| ,\\
\mathrm{III} &= \sum_{m\in\Z^2} \int_{\R^2} |\LocTOp(\Delta^{(1)}_{k_1} ( \widetilde{\eta}_m f_1), \widetilde{\eta}_m \Delta^{(2)}_{k_2} f_2 - \Delta^{(2)}_{k_2} (\widetilde{\eta}_m f_2) )  \eta_m| .
\end{align*}

We begin with the main term $\mathrm{I}$. By \eqref{eqn:local-gain1} we obtain
\begin{equation}\label{eqn:local-gain-L2}
\|\LocTOp(\Delta^{(1)}_{k_1} f_1, \Delta^{(2)}_{k_2} f_2)\|_1 \lesssim 2^{-c|k|} \|f_1\|_{2} \|f_2\|_{2}.
\end{equation}
Now we can estimate
\[ \mathrm{I} \lesssim 2^{-c|k|} \sum_{m\in\Z^2} \|\widetilde{\eta}_m f_1\|_2 \|\widetilde{\eta}_m f_2\|_2 %
\lesssim 2^{-c|k|} \|f_1\|_2 \|f_2\|_2, \]
where we have used the Cauchy--Schwarz inequality in $m$ and that $\sum_{m\in\Z} \widetilde{\eta}_m \lesssim 1$.

It remains to consider the error terms $\mathrm{II}$ and $\mathrm{III}$. To treat $\mathrm{II}$ we estimate
\[  |\widetilde{\eta}_m \Delta^{(1)}_{k_1} f_1 - \Delta^{(1)}_{k_1} (\widetilde{\eta}_m f_1)|(x,y) \le \int_\R |(\widetilde{\eta}_m(x,y) - \widetilde{\eta}_m(u,y)) f_1(u,y) \widecheck{\psi_{k_1}}(x-u)| du \]
By the mean value theorem and rapid decay of the Schwartz function $\widecheck{\psi}$ the previous display is
\begin{align*}
\lesssim 2^{-k_1} \int_{\R} |f_1(u,y)| 2^{k_1} (1+2^{k_1} |x-u|)^{-10} du = 2^{-k_1} (|f_1| *_1 \phi_{k_1})(x,y),
\end{align*}
where $\phi_{k_1}(u) = 2^{k_1} (1+2^{k_1}|u|)^{-10}$ and we have made use of the uniform estimate $\|\widetilde{\eta}_m\|_{C^1}\lesssim 1$. Therefore, also using $|\widetilde{\eta}_m \Delta_{k_2}^{(2)} f_2|\lesssim |f_2|*_2\phi_{k_2}$ and $\sum_{m\in\Z^2} \eta_m= 1$,
\[ \mathrm{II} \lesssim 2^{-k_1} \int_{\R^3}\int (|f_1|*_1\phi_{k_1})(x+t,y) (|f_2|*_2\phi_{k_2})(x,y+t^2) |\psi(t) t^{-1}| dt dx\,dy, \]
which by the Cauchy--Schwarz inequality and Young's convolution inequality is
\[ \lesssim 2^{-k_1} \||f_1|*_1\phi_{k_1}\|_2 \||f_2|*_2\phi_{k_2}\|_2 \lesssim 2^{-k_1} \|f_1\|_2 \|f_2\|_2. \]
The same analysis, with the roles of the two coordinates interchanged, gives the estimate
\[ \mathrm{III} \lesssim 2^{-k_2} \|f_1\|_2 \|f_2\|_2. \]

\subsection{Low frequencies}\label{sec:low-red}
By definition we have
\[ T^\mathrm{L} (f_1,f_2)(x,y) = \sum_{j\in\Z} T_j (f_1 *_1 \widecheck{\varphi_j}, f_2 *_2 \widecheck{\varphi_{2j}})(x,y). \]
Expanding the convolutions this becomes
\begin{align*}
 \int_{\R^2} f_1(u,y) f_2(x,v) K(x-u,y-v) du dv,
\end{align*}
where the kernel $K$ takes the form
\[ K(u,v) = \sum_{j\in \mathbb{Z}} 2^{3j} \kappa(2^j u, 2^{2j} v) \] 
for a Schwartz function $\kappa$ given by
\[\kappa(u,v) = \int \widecheck{\varphi}(u+t) \widecheck{\varphi}(v+t^2) \psi(t) t^{-1} dt. \] 
Since $t\mapsto \psi(t) t^{-1}$ has integral zero, the integral of $\kappa$ also vanishes.
This implies that the Fourier transform of $K$ satisfies the standard (anisotropic) symbol estimates
\begin{align}
    \label{sym-est}
    |\partial_\xi^{\alpha}\partial_\eta^{\beta} \widehat{K}(\xi,\eta)| \leq C_{\alpha,\beta} (|\xi| + |\eta|^{1/2})^{-\alpha-2\beta}.
\end{align}
 for all $\alpha,\beta\geq 0$. 
Therefore, Theorem \ref{thm:anisotp} yields that $T^\mathrm{L}$ extends to a bounded operator $L^2\times L^2\to L^1$.

\subsection{Mixed frequencies}\label{sec:mixed-red}
Here we treat the term $T^{\mathrm{M}}$. From the definition,
\[ T^\mathrm{M} (f_1,f_2) = \sum_{j\in\Z} \sum_{k>0} T_j(\Delta^{(1)}_{j+k} f_1, S^{(2)}_{2j+k-101} f_2) + \sum_{j\in\Z} \sum_{k>0} T_j(S^{(1)}_{j+k-101} f_1, \Delta^{(2)}_{2j+k} f_2) = \mathrm{I} + \mathrm{II}. \]
The terms $\mathrm{I}$ and $\mathrm{II}$ will both receive the same treatment as in \S \ref{sec:low-red}. We proceed with the details for $\mathrm{I}$. We again write
\begin{align*}
  \mathrm{I} (x,y) = \int_{\R^2} f_1(u,y) f_2(x,v) K(x-u,y-v) du dv,
\end{align*}
where $K$ is given by
\[ K(u,v) = \sum_{j\in \mathbb{Z}} 2^{3j} \kappa(2^j u, 2^{2j} v) \] 
with
\begin{align*}
    \kappa(u,v) = \sum_{k>0} \int_{\R} (\psi_k)^\vee (u+t) (\varphi_{k-101})^{\vee} (v+t^2) \psi(t)t^{-1} dt. 
\end{align*}
Taking the Fourier transform we have
\begin{align*}
    \widehat{\kappa}(\xi_1, \xi_2) =   \vartheta(\xi)\sum_{k> 0}  \psi(2^{-k}\xi_1) \varphi(2^{-k+101}\xi_2) 
\end{align*}
where $\xi=(\xi_1,\xi_2)$ and
$$\vartheta(\xi) =  \int_{\R} e^{2\pi i (t\xi_1+t^2\xi_2)} \psi(t)t^{-1} dt .$$
Note that $\widehat{\kappa}$ vanishes on a neighborhood of the origin.
Moreover, for $\xi$ in the support of $\widehat{\kappa}$ and $t$ in the support of $\psi$, the derivative $\xi_1+2t\xi_2$ of the phase does not vanish.
Therefore, repeated integration by parts yields
\begin{align}\label{est-statph}
|\partial^{\alpha} \vartheta(\xi)| \lesssim_{\alpha,N} (1+ |\xi|)^{-N}
\end{align}
for all integers $N\ge 0$ and $\alpha\in\mathbb{N}^2_0$. This shows that $\kappa$ is again a Schwartz function with mean zero and the desired bound for $\mathrm{I}$ follows from Theorem \ref{thm:anisotp}. The bound for $\mathrm{II}$ is derived in the same way, interchanging the roles of the two coordinates.

\subsection{Maximal operator}\label{sec:max-red}
To prove Theorem \ref{thm:maxfct} it suffices to prove the claimed bounds for the maximal operator $(f_1,f_2)\mapsto \sup_{j\in\Z} |M_j(f_1,f_2)|$, where
\[ M_j(f_1,f_2)(x,y) = \int_\R f_1(x+t,y) f_2(x,y+t^2) \psi(2^j t) 2^j dt. \]
To do this we decompose as in \eqref{eqn:basicfreqdecomp},
\[ M_j = M_j^\mathrm{L} + M_j^\mathrm{M} + M_j^\mathrm{H}, \]
where for $\omega\in \{\mathrm{L},\mathrm{M},\mathrm{H}\}$,
\[ M_j^\omega(f_1,f_2) = \sum_{k\in\mathfrak{F}_\omega} M_j (\Delta_{j+k_1}^{(1)} f_1, \Delta_{2j+k_2}^{(2)} f_2) \]
and $\mathfrak{F}_\omega$ is defined by \eqref{eqn:basic-freq-dec}. Each of the three components is treated separately. 

We begin with the maximal operators for $M_j^\mathrm{L}$ and $M_j^{\mathrm{M}}$, each of which will be dealt with in the same way. The arguments in \S \ref{sec:low-red} and \S \ref{sec:mixed-red} show that both $M_j^\mathrm{L}$ and $M_j^{\mathrm{M}}$ take the form
\begin{equation}\label{eqn:max-red-1}
\int_{\R^2} f_1(u,y) f_2(x,v) 2^{3j} \kappa(2^j (x-u), 2^{2j} (y-v)) du \,dv
\end{equation} 
with $\kappa$ a fixed Schwartz function. Exploiting the rapid decay of $\kappa$,
\[ |\kappa(u,v)|\lesssim (1+|u|)^{-10} (1+|v|)^{-10}. \]
Thus, \eqref{eqn:max-red-1} is majorized by a constant times
\[ \Big( \int_{\R} |f_1(u,y)| 2^j (1+2^j |x-u|)^{-10} du \Big) \Big( \int_{\R} |f_2(x,v)| 2^{2j} (1+2^{2j} |y-v|)^{-10} dv \Big) \]
\[ \lesssim \mathcal{M}^{(1)} f_1(x,y) \mathcal{M}^{(2)} f_2(x,y), \]
where $\mathcal{M}^{(\ell)}$ denotes the Hardy--Littlewood maximal operator applied in the $\ell$th coordinate. Since both $M_j^\mathrm{L}$ and $M_j^\mathrm{M}$ are of the form \eqref{eqn:max-red-1}, 
H\"older's inequality and the $L^p$ boundedness of $\mathcal{M}^{(\ell)}$ therefore imply
\[ \|\sup_{j\in\Z} |M_j^\mathrm{L}(f_1,f_2)|\|_r +\|\sup_{j\in\Z} |M_j^\mathrm{M}(f_1,f_2)|\|_r \lesssim \|\mathcal{M}^{(1)} f_1\|_p \|\mathcal{M}^{(2)} f_2\|_q \lesssim \|f_1\|_p \|f_2\|_q \]
for all $p,q\in (1,\infty]$, $r\in (0,\infty]$ with $p^{-1}+q^{-1}=r^{-1}$.

It remains to treat the high frequency components $M_j^\mathrm{H}$. The argument in \S \ref{sec:high-red} proves that for each $k\in\mathfrak{F}_\mathrm{H}$ and each $j\in\Z$ we have
\begin{equation}\label{eqn:max-l2-decay}
\|M_j(\Delta_{j+k_1}^{(1)} f_1, \Delta_{2j+k_2}^{(2)} f_2)\|_1 \lesssim 2^{-\delta |k|} \|f_1\|_2 \|f_2\|_2.
\end{equation} 
Indeed, the only difference between $M_j$ and $T_j$ is that $T_j$ features the mean zero bump function $t\mapsto \psi(2^j t)t^{-1}$.
Let us set 
\[M^{(k)}(f_1,f_2) = \sup_{j\in\Z} |M_j(\Delta_{j+k_1}f_1,\Delta_{2j+k_2}f_2)|\]
Then, as in \eqref{eqn:Tk-l2-decay},
\begin{equation}\label{eqn:Mk-l2-decay} \|M^{(k)}(f_1,f_2) \|_1 \lesssim  2^{-\delta|k|} \sum_{j\in\Z} \|\widetilde{\Delta}_{j+k_1}^{(1)} f_1\|_2 \|\widetilde{\Delta}_{2j+k_2}^{(2)} f_2\|_2 \lesssim 2^{-\delta|k|} \|f_1\|_2 \|f_2\|_2.
\end{equation}
To obtain the claimed range of exponents we again make use of the argument in Lemma \ref{lem:shiftedmaxfct}. This gives the pointwise estimate
\begin{equation}\label{eqn:shiftedmax-max}
M^{(k)}(f_1,f_2) \lesssim \sum_{\iota \in \mathcal{I}} a_\iota (\mathscr{M}^{(1)}_{\sigma_{1,\iota}} \mathcal{M}^{(1)} f_1) (\mathscr{M}^{(2)}_{\sigma_{2,\iota}}f_2)
\end{equation}
for all $j\in\Z, k_1\in\Z, k_2\ge 1$ with $a_\iota, \sigma_{\ell,\iota}$ as in Lemma \ref{lem:shiftedmaxfct}. Let $p,q\in (1,\infty]$, $r\in [1,\infty]$ with $p^{-1}+q^{-1}=r^{-1}$. Then \eqref{eqn:shiftedmax-max}  and H\"older's inequality   give
\[ \|M^{(k)} (f_1,f_2)\|_r \lesssim \sum_{\iota\in\mathcal{I}} a_\iota \|\mathscr{M}^{(1)}_{\sigma_{1,\iota}} \mathcal{M}^{(1)} f_1\|_p \|(\mathscr{M}^{(2)}_{\sigma_{2,\iota}}f_2)\|_q \lesssim |k| \|f_1\|_p \|f_2\|_q,  \]
where the last inequality uses \eqref{eqn:shiftedmaxfct-coeff}, \eqref{eqn:shiftedmaxfct-basic} and the Hardy--Littlewood maximal theorem. Interpolation with \eqref{eqn:Mk-l2-decay} shows
\[ \|M^{(k)}(f_1,f_2)\|_r \lesssim 2^{-\delta_{p,q}|k|} \|f_1\|_p \|f_2\|_q \]
for all $p,q\in (1,\infty)$, $r\in [1,\infty)$ with $p^{-1}+q^{-1}=r^{-1}$ and summing over $k\in\mathfrak{F}_\mathrm{H}$ gives the claim.

\section{A trilinear smoothing  inequality}\label{sec:localop}
In this section we prove Theorem \ref{mainresult}.
In place of the trilinear form we find it convenient to dualize and instead work with the bilinear operator
    \begin{equation}\label{eqn:locop-def2}
\LocTOp (f_1, f_2)(x,y) = \int_\R f_1 (x+t,y) f_2(x,y+t^2) \zeta(x,y,t) dt,
\end{equation}
where $\zeta$ a smooth function that we assume is compactly supported in $\R^2\times (0,\infty)$ (the restriction to $t>0$ is no loss of generality). 

We first claim that it suffices to prove existence of $\sigma>0$ so that for all $\lambda\ge 1$,
\begin{equation}\label{eqn:decay-linf} \|\LocTOp(f_1,f_2)\|_1 \lesssim \lambda^{-\sigma} \|f_1\|_\infty \|f_2\|_\infty 
\end{equation}
holds for all $f_j$ so that $\widehat{f_j}$ is supported where $|\xi_j|\asymp \lambda$ for at least one $j=1,2$. 
To prove this claim observe the inequality
\[ \|\LocTOp( f_1,f_2)\|_1 \lesssim \|f_1\|_{3/2} \|f_2\|_{3/2},  \]
valid for arbitrary test functions $f_1,f_2$. To see this, begin with the triangle inequality, Fubini's theorem and a change of variables to see
\[ \|\LocTOp(f_1, f_2)\|_1 \lesssim \int_{\R^2} |f_1(x,y)| \Big( \int_{\R} |f_2(x+t, y+t^2)| \eta(t) dt \Big ) d(x,y),\]
where $\eta$ is a smooth non-negative function compactly supported in $\R\setminus\{0\}$. By H\"older's inequality the previous is
\[\le \|f_1\|_{3/2} \Big( \big|\int_{\R} |f_2(x+t, y+t^2)| \eta(t) dt\big|^3\Big)^{1/3} \lesssim \|f_1\|_{3/2} \|f_2\|_{3/2},  \]
where we have used the  fact that averages along a parabola map $L^{3/2}\to L^3$ (see \cite{Str70}).
By interpolation with \eqref{eqn:decay-linf} and duality we then obtain the claim \eqref{eqn:local-gain1}.

We will now prove \eqref{eqn:decay-linf}. 
Assume that
\begin{equation}\label{eqn:localasm-strict}
\widehat{f_{\ell_*}}(\xi_1,\xi_2)\not=0\quad\Longrightarrow\quad \lambda\le |\xi_{\ell_*}| \le 2\lambda
\end{equation}
either holds for $\ell_*=1$ or $\ell_*=2$.
If $\ell_*=1$, then we will make the additional assumption that
\begin{equation}\label{eqn:localasm-upper-2}
\widehat{f_2}(\xi_1,\xi_2)\not=0\quad\Longrightarrow\quad |\xi_2| \le 2\lambda.
\end{equation} 
This is without loss of generality as can be seen by a Littlewood-Paley decomposition.

We will make use of \eqref{eqn:localasm-strict} only at the end of the argument. Also assume without loss of generality that 
\[ \|f_1\|_\infty = \|f_2\|_\infty = 1. \]
The first step is a further spatial localization.
Let $\eta$ be a smooth and non-negative function that is compactly supported in a small neighborhood of $[-\tfrac12,\tfrac12]^2$ and satisfies $\sum_{m\in\Z^2} \eta((x,y)-m)=1$ for all $(x,y)\in\R^2$. Choose $\gamma\in (\tfrac12,1)$. For $\ell\in \{1,2\}$ decompose
\begin{equation}\label{eqn:flmdef}
f_\ell(x,y) = \sum_{m\in\mathbb{Z}^2} f_{\ell,m}(x,y)
\ \text{ with } \ 
f_{\ell,m}(x,y) = f_\ell(x,y) \eta(\lambda^{\gamma}(x,y)-m).
\end{equation}
The support of the function $f_{\ell,m}$ is contained in a small neighborhood of the cube of sidelength $\lambda^{-\gamma}$ centered at $\lambda^{-\gamma}m\in \lambda^{-\gamma}\Z^2$. Let us denote by $Q_m$ the cube of sidelength $2\lambda^{-\gamma}$ centered at $\lambda^{-\gamma}m$. Let $\mathfrak{I}$ denote the set of all pairs $\mathbf{m}=(m_1,m_2)\in (\Z^2)^2$ so that
\[ \|\LocTOp(f_{1,m_1}, f_{2,m_2})\|_1 \not= 0. \]
Also let $\mathfrak{I}_\ell\subset\Z^2$ denote the set of $m_\ell$ so that there exists $\mathbf{m}=(m_1,m_2)\in\mathfrak{I}$. The compact support of $\zeta$ implies $\#\mathfrak{I_\ell}=O( \lambda^{2\gamma})$.
We claim that inspection of \eqref{eqn:locop-def2} yields
\begin{equation}\label{eqn:interactingcount}
\# \mathfrak{I} = O(\lambda^{3\gamma}).
\end{equation}
Indeed, compact support of $\zeta$ already implies $\#\mathfrak{I}=O(\lambda^{4\gamma})$. To see the additional saving of $O(\lambda^{\gamma})$, suppose $(x+t,y)\in Q_{m_1}$ and $(x,y+t^2)\in Q_{m_2}$. Writing $t^2=(y+t^2) - y = ( (x+t) - x)^2$ we obtain the relation
\[ \lambda^{-\gamma}(m_{2,2} - m_{1,2}) - \lambda^{-2\gamma}(m_{1,1} - m_{2,1})^2  = O(\lambda^{-\gamma}),\]
which implies that say, with $m_1$ and $m_{2,1}$ given (for which there are $O(\lambda^{3\gamma})$ choices), the integer $m_{2,2}$ is determined up to $O(1)$.

\subsection{A basic estimate}\label{sec:local-basicest} 
We use the triangle inequality to estimate
\[ \|\LocTOp(f_1, f_2)\|_1 \le \sum_{\mathbf{m}\in\mathfrak{I}} \| \LocTOp(f_{1,m_1}, f_{2,m_2}) \|_1. \]

Next, we apply the Cauchy--Schwarz inequality to the integration in $(x,y)$ to find that the quantity $\|\LocTOp(f_{1,m_1}, f_{2,m_2})\|_1^2$ is dominated by a constant times
\begin{align}\label{eqn:loc-cs}
\lambda^{-2\gamma}\int_{\R^4} & f_{1,m_1}(x+t+s,y)\overline{f_{1,m_1}(x+t,y)}  \\\nonumber
 & f_{2,m_2}(x,y+(t+s)^2)  \overline{f_{2,m_2}(x,y+t^2)} \zeta_{\mathbf{m}}(x,y,t,s) dx \,dy \,dt ds
\end{align}
where $\zeta_{\mathbf{m}}$ is a smooth non-negative function compactly supported in a cube with side lengths $O(\lambda^{-\gamma})$ contained in $\R^2\times (0,\infty)^2$, which satisfies 
\[\norm{\partial^\alpha\zeta_\mathbf{m}}_{C^0} \lesssim 
\lambda^{\gamma|\alpha|}\]
for each multiindex $\alpha\in\mathbb{N}_0^4$. 
For later use we also fix for each $\mathbf{m}\in\mathfrak{I}$ a point
\begin{equation}\label{cubepointofref}
(\overline{x},\overline{y},\overline{t})=(\overline{x}_\mathbf{m},\overline{y}_\mathbf{m},\overline{t}_\mathbf{m})\quad\text{such that}\quad(\overline{x}+\overline{t},\overline{y})\in Q_{m_1}, (\overline{x},\overline{y}+\overline{t}^2)\in Q_{m_2}.
\end{equation}
Then for each $(x,y,t,s)$ in the support of $\zeta_\mathbf{m}$,
\[ |x-\overline{x}| + |y-\overline{y}| + |t-\overline{t}| \lesssim \lambda^{-\gamma}. \]

If $(x+t+s,y)$ and $(x+t,y)$ both lie in the support of $f_{1,m_1}$
then $|s|=|(x+t+s)-(x+t)| = O(\lambda^{-\gamma})$.
Now by the mean value theorem, for $|s| = O(\lambda^{-\gamma})$ we have
\[ f_{2,m_2}(x,y+(t+s)^2) = f_{2,m_2}(x,y+t^2 + 2s\overline{t}) + O(\lambda^{-2\gamma} \|\partial_2 f_{2,m_2}\|_\infty). \] 
By \eqref{eqn:localasm-upper-2} and \eqref{eqn:flmdef} we have
\[ \|\partial_2 f_{2,m_2}\|_\infty \lesssim \lambda \|f_2\|_\infty=\lambda.  \]
This uses $\gamma\le 1$. Define
\[\delta = \gamma-\tfrac12. \]
Note that $\delta>0$ since $\gamma>\tfrac12$.
For a function $f$ on $\R^2$ let us denote
\begin{align*}%
  \md_s^{(1)}f(x,y) = f(x+s,y) \overline{f(x,y)},\quad
   \md_s^{(2)}f(x,y) = f(x,y+s) \overline{f(x,y)}.
\end{align*}
Therefore we can write%
\[ f_{1,m_1}(x+t+s,y)\overline{f_{1,m_1}(x+t,y)} = \md_s^{(1)} f_{1,m_1}(x+t,y),\]
\[ f_{2,m_2}(x,y+(t+s)^2)\overline{f_{2,m_2}(x,y+t^2)} = \md_{2s\overline{t}}^{(2)}f_{2,m_2} (x,y+t^2) + O(\lambda^{-2\delta}).\]
Thus in all, $\|\LocTOp(f_1,f_2)\|_1$ 
is majorized by a constant times
\begin{equation} \label{mainterm1} 
\lambda^{-\gamma} \sum_{\mathbf{m}\in\mathfrak{I}}  \Big|
\int_{|s|=O(\lambda^{-\gamma})} \Big(\int_{\mathbb{R}^3}
\md_s^{(1)}f_{1,m_1}(x+t,y)
\md_{2s\overline{t}}^{(2)}f_{2,m_2}(x,y+t^2)
\zeta_\mathbf{m}(x,y,t,s)\,dx\,dy\,dt\Big) \,ds\Big|^{\frac12} \end{equation}
plus a constant times
\begin{equation} \label{latterterm} \lambda^{-\gamma} \sum_{\mathbf{m}\in\mathfrak{I}}  \Big(
\int_{|s|=O(\lambda^{-\gamma})} \Big(\int_{\mathbb{R}^3}
\lambda^{-2\delta}
\zeta_\mathbf{m}(x,y,t,s)\,dx\,dy\,dt\Big) \,ds\Big)^{\frac12}. \end{equation}
The term \eqref{latterterm}  is $O(\lambda^{-\delta})$,
since $\#\mathfrak{I}=O(\lambda^{3\gamma})$
and the support of each auxiliary function $\zeta_\mathbf{m}$ 
contained in a cube in $\mathbb{R}^4$ of side length $O(\lambda^{-\gamma})$. 
It remains to estimate the main term \eqref{mainterm1}. 
The Cauchy-Schwarz inequality applied to the summation over $\mathbf{m}$ gives a majorization
for \eqref{mainterm1} of the form
\begin{equation}\label{eqn:sumaftercs}
  \lambda^{\gamma/2} \big( 
\int_{|s|=O(\lambda^{-\gamma})}
\sum_{\mathbf{m}\in\mathfrak{I}} 
\big| \int_{\mathbb{R}^3} \md_s^{(1)}f_{1,m_1}(x+t,y) \md_{2s\overline{t}}^{(2)}f_{2,m_2}(x,y+t^2)
\zeta_\mathbf{m}(x,y,t,s)\,dx\,dy\,dt\big|\,ds\big)^{1/2}.    
\end{equation} 
Fix $\mathbf{m}\in\mathfrak{I}$ and any parameter $s$
satisfying $|s| = O(\lambda^{-\gamma})$, and consider
\begin{equation} \label{next1} 
\int_{\mathbb{R}^3} \md_s^{(1)}f_{1,m_1}(x+t,y) \md_{2s\overline{t}}^{(2)}f_{2,m_2}(x,y+t^2)
\zeta_\mathbf{m}(x,y,t,s)\,dx\,dy\,dt.
\end{equation}

Since for every $s\in\R$ the function $\md_s^{(\ell)} f_{\ell,m_\ell}$ is supported in a cube of side length $\tfrac32\lambda^{-\gamma}$ centrally contained in $Q_{m_\ell}$, we can write it in terms of local Fourier series as %
\begin{equation} \label{localFourierseries}
\md_s^{(\ell)}f_{\ell,m_\ell}(x,y) = \tilde\eta_{m_\ell}(x,y) 
\sum_{k\in\mathbb{Z}^2} a_{\ell,m_\ell,k,s} e^{\pi i \lambda^\gamma k\cdot (x,y)},\quad(\ell=1,2)
\end{equation}
with $\tilde\eta_{m}$ a smooth function that equals one on $Q_{m}$, is supported on a small neighborhood of $Q_{m}$ and satisfies $|\partial^\alpha \tilde\eta_{m}| = O(\lambda^{\gamma|\alpha|})$
for all $\alpha\in\mathbb{N}_0^2$ and $m\in\Z^2$. The Fourier coefficients are given by
\[ a_{\ell,m_\ell,k,s} = \tfrac14 \lambda^{2\gamma} \widehat{\md_s^{(\ell)}f_{\ell,m_\ell}}(\tfrac12 \lambda^{\gamma} k). \]
By Parseval's theorem,
\begin{equation} \label{l2control}
\sum_{k\in\mathbb{Z}^2} |a_{\ell,m_\ell,k,s}|^2 \lesssim \lambda^{2\gamma} \|\md_s^{(\ell)}f_{\ell,m_\ell}\|_2^2 \lesssim 1. \end{equation}
with implicit constants uniform in $\lambda,m_\ell,s$.

As a consequence of the band-limitedness hypotheses \eqref{eqn:localasm-upper-2}
on $f_2$, 
a stronger bound holds for $\ell=2$ when $k$ is large in an appropriate sense. 
Write $k = (k_1,k_2)$. Integration by parts gives
\[ \sum_{k_1\in\mathbb{Z}} |a_{2,m_2,k,s}|^2 \lesssim_{N} \lambda^{(1-\gamma)N} |k_2|^{-N} \]
for every $N\ge 1$, $k_2\not=0$. Fix a small $\epsilon_1>0$. 
Then from the previous display,
\begin{equation}\label{gainfromfreqsupport}
\sum_{|k_2|\ge \lambda^{1-\gamma+\epsilon_1}} \sum_{k_1\in\mathbb{Z}} |a_{2,m_2,k,s}|^2 \lesssim_{N,\epsilon_1} \lambda^{-N}.
\end{equation} 
To further estimate \eqref{next1} we majorize it using \eqref{localFourierseries} and \eqref{gainfromfreqsupport} by
\[O(\lambda^{-N}) + \sum_{\substack{\mathbf{k}\in(\Z^2)^2,\\|k_{2,2}|\le \lambda^{1-\gamma+\epsilon_1}}} |a_{1,m_1,k_1,s} a_{2,m_2,k_2,2s\overline{t}}|\; \Big|\int_{\mathbb{R}^3} e^{\pi i \lambda^{\gamma} (k_1\cdot (x+t,y)+ k_2\cdot (x,y+t^2))} 
\zeta_\mathbf{m}(x,y,t,s)\,dx\,dy\,dt\Big|,\]
where $\mathbf{k}=(k_1,k_2)=((k_{1,1},k_{1,2}), (k_{2,1}, k_{2,2}))\in (\Z^2)^2$.
The gradient of the phase function 
\[((x,y),t)\mapsto \lambda^{\gamma}(k_1\cdot (x+t,y)+k_2\cdot (x,y+t^2))\]
is equal to 
\begin{equation}\label{eqn:gradient}
\lambda^\gamma (k_1+k_2, k_{1,1} + 2t k_{2,2}) = \lambda^\gamma (k_1+k_2, k_{1,1} + 2\overline{t} k_{2,2}) + O(\lambda^{1-\gamma+\epsilon_1}),
\end{equation}
where $\overline{t}=\overline{t}_\mathbf{m}$ is as in \eqref{cubepointofref} and we have used $|k_{2,2}|\le \lambda^{1-\gamma+\epsilon_1}$.
The remainder term $O(\lambda^{1-\gamma+\epsilon_1})$ on the right-hand side of \eqref{eqn:gradient}
is small relative to the factor $\lambda^\gamma$, since $\gamma > \tfrac12$ and $\epsilon_1>0$ is small.
Let $\epsilon_2>0$ be a small exponent
to be chosen below. Integration by parts therefore gives
\[
\Big|\int_{\mathbb{R}^3} e^{\pi i \lambda^{\gamma} (k_1\cdot (x+t,y)+ k_2\cdot (x,y+t^2))} 
\zeta_\mathbf{m}(x,y,t,s)\,dx\,dy\,dt\Big| \lesssim_N \lambda^{-N}
\]
for all $N\ge 0$ unless 
\begin{equation}\label{eqn:stationary1}
|k_1+k_2|\lesssim \lambda^{\epsilon_2}\quad\text{and}\quad|k_{1,1}+2\overline{t}k_{2,2}|\lesssim \lambda^{\epsilon_2}.
\end{equation}
On the other hand, if \eqref{eqn:stationary1} holds, then $k_{1,1}$ determines $k_{1,2}$ and $k_2$ up to additive ambiguity $O(\lambda^{\epsilon_2})$. In the same way, $k_{2,2}$ determines $k_1,k_{2,1}$ up to $O(\lambda^{\epsilon_2})$.
Thus far, we have shown that \eqref{next1} is majorized by
\[
O(\lambda^{-N}) + O(\lambda^{-3\gamma})\sum_{\substack{\mathbf{k}\in (\Z^2)^2,\\ \eqref{eqn:stationary1}\text{ holds}}} |a_{1,m_1,k_1,s} a_{2,m_2,k_2,2s\overline{t}}|
\]
for every $N\ge 0$.
By the Cauchy--Schwarz inequality, \eqref{l2control} and the previous display, we have now shown that for $\ell=1$,
\begin{equation} \label{graphappears1}
|\eqref{next1}|\lesssim \lambda^{-N} + \lambda^{-3\gamma+2\epsilon_2} 
\big( \sum_{\substack{k_1\in \Z^2,\\ |2\overline{t}k_{1,2}-k_{1,1}|=O(\lambda^{\epsilon_2})}} |a_{1, m_1, k_1, s}|^2 \big)^{\frac12}
\end{equation}
and for $\ell=2$,
\begin{equation} \label{graphappears2}
|\eqref{next1}|\lesssim \lambda^{-N} + \lambda^{-3\gamma+2\epsilon_2} 
\big( \sum_{\substack{k_2\in \Z^2,\\ |2\overline{t}k_{2,2}-k_{2,1}|=O(\lambda^{\epsilon_2})}} |a_{2, m_2, k_2, 2s\overline{t}}|^2 \big)^{\frac12}.
\end{equation}
Plugging these estimates into \eqref{eqn:sumaftercs}, applying the Cauchy--Schwarz inequality in $s$ and $\mathbf{m}$ and keeping in mind \eqref{latterterm} gives for each $\ell\in\{1,2\}$,
\begin{equation}\label{eqn:first-penult}
\|\LocTOp(f_1,f_2)\|_1 \lesssim \lambda^{-\delta}+ %
\lambda^{-\frac{\gamma}2+\epsilon_2}\big( 
\int_{|s|=O(\lambda^{-\gamma})}
\sum_{\mathbf{m}\in\mathfrak{I}} \sum_{\substack{k_\ell\in \Z^2,\\ |2\overline{t}_\mathbf{m}k_{\ell,2}-k_{\ell,1}|=O(\lambda^{\epsilon_2})}} |a_{\ell, m_\ell, k_\ell, s}|^2 \,ds\big)^{1/4}.
\end{equation} 
Note that in the case $\ell=2$ this is obtained from \eqref{graphappears2} by a change of variables $2s\overline{t}_{\mathbf{m}}\mapsto s$ which only incurs a Jacobian factor of $O(1)$.

We pause to point out a certain novelty in \eqref{eqn:stationary1}. 
Rather than deducing that a favorable bound holds unless a function has a 
significantly large Fourier coefficient,
as in the work of Roth \cite{Rot53},
we have deduced such a bound unless the $\ell^2$
norm of the restriction of the Fourier transform to (the union of
a relatively small number of cosets of) a certain rank $1$ subgroup is significantly large.

For the moment suppose that $\ell=1$.
Writing $\mathbf{m}=(m_1,m_2)$ and holding $m_1$ fixed there are $O(\lambda^{\gamma})$ values of $m_2$ so that $(m_1,m_2)\in\mathfrak{I}$.
Moreover, holding also $k_1=(k_{1,1},k_{1,2})\in\Z^2$ fixed we claim that there are at most
\begin{equation}\label{eqn:improvedmbound}
1+O(\lambda^{\gamma+\epsilon_2} |k_{1,1}|^{-1})
\end{equation} 
choices of $m_2$ such that
\begin{equation}\label{eqn:k1condition}
|2\overline{t}_\mathbf{m}k_{1,2}-k_{1,1}|=O(\lambda^{\epsilon_2}).
\end{equation} 
To see this note that \eqref{eqn:k1condition} is equivalent to $|2k_{1,2}-(\overline{t}_\mathbf{m})^{-1}k_{1,1}|=O(\lambda^{\epsilon_2})$ since $\overline{t}$ is bounded from above and below by positive constants. Then, from the mean value theorem we see that changing $m_2$ by $1$ effects a change of $\asymp \lambda^{-\gamma}|k_{1,1}|$ in $|2k_{1,2}-(\overline{t}_\mathbf{m})^{-1}k_{1,1}|$, which implies the claim.
In the case $\ell=2$ we argue similarly that for every fixed $m_2$ and $k_2=(k_{2,1},k_{2,2})$, there are at most 
\begin{equation}\label{eqn:improvedmbound2}
1+ O(\lambda^{\gamma+\epsilon_2} |k_{2,2}|^{-1})
\end{equation}
choices of $m_1$ such that $|2\overline{t}_\mathbf{m}k_{2,2}-k_{2,1}|=O(\lambda^{\epsilon_2})$.
The bound \eqref{eqn:improvedmbound} (resp. \eqref{eqn:improvedmbound2}) is an improvement over $O(\lambda^{\gamma})$ if $|k_{1,1}|$ (resp. $|k_{2,2}|$) is large enough.
To quantify this we introduce another auxiliary exponent $\kappa\in (\epsilon_2, 1-\gamma)$. %
Then for each fixed $s$ and $\ell\in \{1,2\}$,
\[
\sum_{\mathbf{m}\in\mathfrak{I}} \sum_{\substack{k_\ell\in \Z^2,\\ |2\overline{t}_\mathbf{m}k_{\ell,2}-k_{\ell,1}|=O(\lambda^{\epsilon_2})}} |a_{\ell, m_\ell, k_\ell, s}|^2 \lesssim \lambda^\gamma \sum_{m_\ell\in\mathfrak{I}_\ell} \sum_{\substack{k_\ell\in \Z^2,\\ |k_{\ell,\ell}|\le \lambda^{\kappa}}} |a_{\ell, m_\ell, k_\ell, s}|^2 + \mathrm{R}_{s,\ell},
\]
where %
\[\mathrm{R}_{s,\ell} = (1+\lambda^{\gamma+\epsilon_2-\kappa}) \sum_{m_\ell\in\mathfrak{I}_\ell} \sum_{\substack{k_\ell\in \Z^2}} |a_{\ell, m_\ell, k_\ell, s}|^2 \lesssim \lambda^{3\gamma+\epsilon_2-\kappa}, \]
where the last inequality uses \eqref{l2control} and $\#\mathfrak{I}_\ell\lesssim \lambda^{2\gamma}$. Combining this with \eqref{eqn:first-penult} we have now proved that for $\ell=1,2$, %
\begin{equation}\label{eqn:first-final}
 \|\LocTOp(f_1,f_2)\|_{1} \lesssim \lambda^{\frac12-\gamma} + \lambda^{-\frac{\kappa}4 + \frac54\epsilon_2} + \lambda^{-\frac{\gamma}4+\epsilon_2} \big( 
\int_{|s|=O(\lambda^{-\gamma})}
\sum_{m_\ell\in\mathfrak{I}_\ell} \sum_{\substack{k_\ell\in \Z^2,\\ |k_{\ell,\ell}|\le \lambda^{\kappa}}} |a_{\ell, m_\ell, k_\ell, s}|^2 \,ds\big)^{1/4}.
\end{equation}

\noindent {\em Remark.} The argument that gives \eqref{eqn:improvedmbound}, \eqref{eqn:improvedmbound2} works in both components, so the main term could be modified by replacing $|k_{\ell,\ell}|\le \lambda^\kappa$ with $|k_\ell|\le \lambda^{\kappa}$ but we won't need that.

\subsection{A structural decomposition} \label{section:structure}
We begin with a decomposition of an $L^2(\R^d)$ function $f$ that relates $f$ to $\md_sf$,  where  
 $\md_sf(x) = f(x+s) \overline{f(x)}$. 
This will be stated in $\R^d$, though we will only make use of it in the case $d=1$.
\begin{lemma} \label{lemma:structure}
Let $f\in L^2(\R^d)$, $\rho\in (0,1)$ and $R>0$.
Suppose that
\[ \int_{\mathbb{R}^d} \int_{|\xi|\le R}
|\widehat{\md_sf}|^2(\xi) \,d\xi\,ds
\ge \rho \norm{f}_{{L^2}}^4.\]
Then there exists an orthogonal decomposition $f=g+h$ 
with $\widehat{g}$ supported in some ball of radius $R$, 
$g\perp h$, and $\norm{g}_{{L^2}} \ge \tfrac12 \rho^{1/2} \norm{f}_{{L^2}}$.
\end{lemma}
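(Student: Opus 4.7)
The plan is to reformulate the hypothesis as a statement about the $L^2$ mass of $\widehat f$ on some ball of radius $R$, and then define $g$ to be the Fourier restriction of $f$ to that ball. The heart of the argument is the Fourier-analytic identity
\[
\int_{\R^d} |\widehat{\md_s f}(\xi)|^2\, ds \;=\; \int_{\R^d} |\widehat f(\zeta + \xi)|^2\, |\widehat f(\zeta)|^2\, d\zeta,
\]
valid pointwise in $\xi$. This is obtained by writing
$\widehat{\md_s f}(\xi) = e^{2\pi i s\cdot\xi} \int e^{2\pi i s\cdot \zeta}\, \widehat f(\zeta+\xi)\,\overline{\widehat f(\zeta)}\, d\zeta$ and applying Plancherel in the $s$ variable to the quantity in parentheses.

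Setting $F := |\widehat f|^2$, integrating the above identity over $|\xi|\le R$ and swapping the order of integration, the hypothesis of the lemma becomes
\[
\int_{\R^d} F(\zeta)\, \varphi(\zeta)\, d\zeta \;\ge\; \rho \|F\|_{L^1}^2,
\qquad \varphi(\zeta) := \int_{B(\zeta,R)} F(\eta)\, d\eta.
\]
Since $0 \le \varphi \le \|F\|_{L^1}$, the trivial chain
\[
\rho\|F\|_{L^1}^2 \;\le\; \int F\,\varphi \;\le\; \|F\|_{L^1}\,\sup_\zeta \varphi(\zeta)
\]
yields $\sup_\zeta \varphi(\zeta) \ge \rho\|F\|_{L^1}$. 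Because $F \in L^1(\R^d)$, the convolution $\varphi = F * \mathbf{1}_{B(0,R)}$ is continuous and vanishes at infinity, so the supremum is attained at some $\zeta_0 \in \R^d$, i.e.\ $\int_{B(\zeta_0,R)} |\widehat f|^2 \ge \rho\|f\|_{L^2}^2$.

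To finish, define $g$ by $\widehat g := \widehat f\cdot \mathbf{1}_{B(\zeta_0,R)}$ and $h := f - g$. Then $\widehat g$ is supported in a ball of radius $R$, the Fourier supports of $g$ and $h$ are disjoint so that $g \perp h$ by Plancherel, and
\[
\|g\|_{L^2}^2 = \int_{B(\zeta_0,R)} |\widehat f|^2 \;\ge\; \rho\|f\|_{L^2}^2,
\]
which gives $\|g\|_{L^2} \ge \rho^{1/2}\|f\|_{L^2}$, somewhat stronger than the claimed bound $\tfrac12 \rho^{1/2}\|f\|_{L^2}$. There is no substantial obstacle in this argument: the Plancherel identity does all the work, and the remainder is an elementary averaging plus a Fourier projection. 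If one were worried about attainability of the supremum, it is harmless to replace $\zeta_0$ by an almost-maximizer and absorb the loss into the factor $\tfrac12$ present in the statement.
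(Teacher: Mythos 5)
Your proposal is correct and takes essentially the same approach as the paper: the same Plancherel identity in $s$, the same averaging step to locate a ball of radius $R$ carrying $\gtrsim\rho\norm{f}_{L^2}^2$ of the mass of $|\widehat f|^2$, and the same Fourier restriction to define $g$. The only cosmetic difference is that you observe the supremum of $\varphi = |\widehat f|^2 * \mathbf{1}_{B(0,R)}$ is actually attained, whereas the paper simply takes a ball essentially realizing the supremum and absorbs the loss into the factor $\tfrac12$; both are fine.
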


\begin{proof}
We compute
\[ \widehat{\md_s f}(\xi) = \int_{\R^d} e^{2\pi i s\cdot (\xi + \xi')} \widehat{f}(\xi+\xi') \overline{\widehat{f}} (\xi') d\xi' \]
and hence
\[ |\widehat{\md_sf}(\xi)|^2
= \int_{\R^d} \int_{\R^d}
e^{2\pi is\cdot (\xi'-\xi'')}
\widehat{f}(\xi+ \xi') \overline{\widehat{f}}(\xi')
\overline{\widehat{f}}(\xi+ \xi'') \widehat{f}(\xi'')
\,d\xi'\,d\xi''
\]
whence
\[ \int_{\mathbb{R}^d} |\widehat{\md_sf}(\xi)|^2\,ds
= \int_{\R^d} 
|\widehat{f}(\xi+ \xi')|^2 |\widehat{f}(\xi')|^2
\,d\xi'\]
and finally
\[
\begin{aligned}
\int_{\mathbb{R}^d} \int_{|\xi|\le R} |\widehat{\md_sf}(\xi)|^2\,d\xi\,ds
&=  \iint_{|\xi-\xi'|\le R} 
|\widehat{f}(\xi)|^2 |\widehat{f}(\xi')|^2
\,d\xi\,d\xi'
\\&
\le \norm{f}_{{L^2}}^2 
\ \sup_{B} \int_B |\widehat{f}|^2.
\end{aligned}\]
where supremum is over all balls $B$ of radius $R$ in $\R^d$.
Choose $B$ to essentially realize this supremum. The desired decomposition is obtained by defining $g\in L^2$   via $\widehat{g} = \mathbf{1}_B \widehat{f}$ and $h=f-g$.
\end{proof}

\begin{lemma} \label{lemma:sharpflat}
Let $R\ge 1$ and $\varrho\in (0,1)$. Let $f\in{L^2}(\mathbb{R})$. 
There exists a decomposition
\[ f = f_\sharp + f_\flat \]
with the following properties.
 \begin{enumerate}
     \item One has
     \[
     \norm{f_\sharp}_{L^2}+\norm{f_\flat}_{L^2} \lesssim  \norm{f}_{L^2}
     \]
     
\item The function $f_\sharp$ admits a decomposition 
\[
f_\sharp(x) = \sum_{n=1}^{\mathcal{N}} h_n(x) e^{i \alpha_{n}x}
\]
with each  $\alpha_{n}\in\mathbb{R}$, 
$h_n$ a smooth function satisfying $\|\partial^N h_n\|_{\infty}\lesssim_N R^N \|f\|_\infty$ for all integers $N\ge 0$ and 
$\norm{h_{n}}_{L^2} \lesssim \norm{f}_{{L^2}}$,
$h_n$ is Fourier supported in $[-R,R]$,
and $\mathcal{N}\lesssim \varrho^{-1}. $ 
Moreover, the support of $\widehat{f_\sharp}$ is contained in the support of $\widehat{f}$.
\item One has the bound
\begin{equation} \label{gflat}
\int_{\mathbb{R}} \int_{|\xi| \le R} 
|\widehat{\md_s f_\flat}(\xi)|^2\,d\xi\,ds
\lesssim \varrho\norm{f}_{{L^2}}^4.
\end{equation}
 \end{enumerate}
All implicit constants do not depend on $R,\varrho,f$.
\end{lemma}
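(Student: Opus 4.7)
The plan is to obtain the decomposition by iteratively applying Lemma \ref{lemma:structure} in a greedy fashion: at each step I test whether the current residual $f_k$ satisfies the flatness bound \eqref{gflat} (with $\varrho$ replaced by a suitable constant multiple), and if not, peel off a structured piece via Lemma \ref{lemma:structure}.

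Setting $f_0=f$, at step $k$ I check whether $\int_{\mathbb{R}}\int_{|\xi|\le R}|\widehat{\md_s f_k}|^2\,d\xi\,ds \le C\varrho\,\|f\|_{L^2}^4$ for a small absolute constant $C$; if so, terminate and take $f_\flat:=f_k$. Otherwise, since $\|f_k\|_{L^2}\le \|f\|_{L^2}$, the hypothesis of Lemma \ref{lemma:structure} holds for $f_k$ with parameter $\rho_k := C\varrho\,(\|f\|_{L^2}/\|f_k\|_{L^2})^4\ge C\varrho$, producing an orthogonal splitting $f_k=g_k+f_{k+1}$ with $\widehat{g_k}$ supported in a ball $B(\xi_k,R)$, $g_k\perp f_{k+1}$, and $\|g_k\|_{L^2}^2\ge\tfrac14\rho_k\|f_k\|_{L^2}^2\ge \tfrac{C\varrho}{4}\|f\|_{L^2}^2$. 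Because each sharp Fourier projection kills what lies inside the chosen ball and leaves everything else unchanged, one checks inductively that $\widehat{f_{k+1}}$ is supported in $\bigcap_{j\le k}B(\xi_j,R)^c$, and in particular that $f_{k+1}\perp g_1,\ldots,g_k$. The Pythagorean identity $\sum_{j\le k}\|g_j\|_{L^2}^2+\|f_{k+1}\|_{L^2}^2=\|f\|_{L^2}^2$ then combines with the uniform lower bound on $\|g_k\|_{L^2}$ to force termination after $\mathcal{N}\lesssim \varrho^{-1}$ steps and delivers claim (1) with $f_\sharp:=\sum_{k=1}^{\mathcal{N}} g_k$. Writing $g_k(x) = h_k(x)\,e^{2\pi i\xi_k x}$ with $\widehat{h_k}(\zeta) = \widehat{g_k}(\zeta+\xi_k)$ gives $\widehat{h_k}$ supported in $[-R,R]$ and $\|h_k\|_{L^2}=\|g_k\|_{L^2}\le\|f\|_{L^2}$; the inclusion $\mathrm{supp}\,\widehat{f_\sharp}\subset\mathrm{supp}\,\widehat{f}$ is immediate. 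Bernstein's inequality then reduces the derivative bound $\|\partial^N h_k\|_\infty\lesssim_N R^N\|f\|_\infty$ to the pointwise estimate $\|h_k\|_\infty\lesssim \|f\|_\infty$.

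The main obstacle I anticipate is precisely this $L^\infty$ bound, which does \emph{not} follow from the sharp-cutoff iteration above, since Fourier projection onto a ball is unbounded on $L^\infty$. The natural fix is to replace the sharp cutoff with a smooth one $\chi_k(\xi) = \chi((\xi-\xi_k)/R)$, so that $g_k = f_k * \check{\chi}_k$ with $\|\check{\chi}_k\|_{L^1}=O(1)$ and hence $\|g_k\|_\infty\lesssim \|f_k\|_\infty$; but then the residual $L^\infty$-norm grows multiplicatively by $(1+O(1))$ at each step, yielding the useless bound $\|g_k\|_\infty \lesssim \exp(O(\varrho^{-1}))\,\|f\|_\infty$. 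To obtain a bound uniform in $\varrho$ one must arrange that the selected centers $\xi_1,\ldots,\xi_{\mathcal{N}}$ are pairwise separated at scale $\gtrsim R$: then $\chi_k$ vanishes on the supports of all earlier $\chi_j$, the cumulative multiplier defining $g_k$ collapses to $\chi_k$ applied to $f$ itself, and one obtains $\|g_k\|_\infty \le \|\check{\chi}\|_{L^1}\|f\|_\infty$ directly. The delicate point is to show that at each step the greedy selection can still find a well-separated center carrying a $\gtrsim \varrho\|f\|_{L^2}^2$ share of the $L^2$ mass of $\widehat{f_k}$; this should follow from the fact that the frequency mass near previously chosen centers has already been depleted by the earlier smooth multipliers, but reconciling the separation constraint with the $L^2$ decrement used for termination is where I expect the argument to require the most care.
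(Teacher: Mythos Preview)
Your iterative scheme is more complicated than necessary, and you have correctly put your finger on the obstruction: controlling $\|h_k\|_\infty$ uniformly in $k$ when the smooth multipliers may overlap. The paper sidesteps the iteration entirely with a one-shot construction. Fix once and for all a smooth partition of unity $\sum_{n\in\mathbb{Z}}\varphi(R^{-1}\xi+n)=1$ with $\varphi\in C_c^\infty(-1,1)$, $\varphi\ge 0$. Call an index $n$ \emph{heavy} if some interval $I$ of length $R$ meeting the support of $\varphi(R^{-1}\cdot+n)$ satisfies $\int_I|\widehat f|^2\ge\varrho\|f\|_{L^2}^2$, and set
\[
\widehat{f_\sharp}(\xi)=\sum_{n\text{ heavy}}\varphi(R^{-1}\xi+n)\,\widehat f(\xi),\qquad f_\flat=f-f_\sharp.
\]
The heavy intervals have bounded overlap, so there are $O(\varrho^{-1})$ heavy indices. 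Each summand is $h_n(x)e^{i\alpha_n x}$ with $h_n=f*\widecheck{\varphi}(R\,\cdot)$ up to a modulation, so $\|h_n\|_\infty\le\|\widecheck\varphi\|_{L^1}\|f\|_\infty$ and $\|\partial^N h_n\|_\infty\lesssim_N R^N\|f\|_\infty$ directly from Bernstein, with no accumulation. The separation that you were trying to engineer is already built into the lattice structure of the partition. For the flatness bound one notes $|\widehat{f_\flat}|\le|\widehat f|$ pointwise (since $0\le\sum_{\text{heavy}}\varphi_n\le 1$), and that any interval $I$ of length $R$ on which $\widehat{f_\flat}$ does not vanish must meet the support of some non-heavy $\varphi_n(R^{-1}\cdot)$, hence $\int_I|\widehat{f_\flat}|^2\le\int_I|\widehat f|^2<\varrho\|f\|_{L^2}^2$; the contrapositive of Lemma~\ref{lemma:structure} then yields \eqref{gflat}.

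Your proposed fix amounts to a rediscovery of this: once you force the greedily chosen centers onto a lattice of spacing $\sim R$, the cumulative multipliers $\chi_k\prod_{j<k}(1-\chi_j)$ collapse to disjointly supported bumps applied to $\widehat f$ itself, the iteration becomes order-independent, and what remains is exactly the one-shot selection above. The ``delicate point'' you flag---whether the greedy step can always find a well-separated heavy center---is then no longer an issue, because one simply takes all heavy lattice cells simultaneously rather than one at a time.
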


\begin{proof}
Let $\varphi\in C^\infty_0(\mathbb{R})$
be supported in $(-1,1)$ and satisfy
$\sum_{n\in\mathbb{Z}} \varphi(\xi+n)=1$ for every $\xi\in\mathbb{R}$.  Denote $\varphi_n(\xi)=\varphi(\xi+n)$. Let  $\mathfrak{N}$ be the set of all
   $n\in \Z$ for which
there exists an interval $I$ of length $R$ intersecting support of $\varphi_n(R^{-1}\cdot)$ so that
\[
\int_{I} |\widehat{f}(\xi)|^2\,d\xi \ge \varrho \norm{f}_{{L^2}}^2.
\]
Observe that $\#\mathfrak{N}$ is $O(\varrho^{-1})$.
The desired decomposition is obtained by defining $f_\flat$ and $f_\sharp$ via
\begin{align*}&\widehat{f_\sharp}(\xi) = \sum_{n\in \mathfrak{N}} \varphi_n(\lambda^{-\tau}\xi)
\widehat{f}(\xi)
, \quad \widehat{f_\flat}(\xi) = \widehat{f}(\xi)- \widehat{f_\sharp}(\xi) = \sum_{n\in \Z\setminus \mathfrak{N}} \varphi_n(\lambda^{-\tau}\xi)
\widehat{f}(\xi).  
\end{align*}

Indeed, we claim that Lemma \ref{lemma:structure} implies that $f_\flat$ satisfies \eqref{gflat}. Suppose not.
Then by the lemma applied with $\rho=c \varrho$ there exists an orthogonal decomposition $f_\flat = g + h$ with $\widehat{g}$ supported on an interval $I$ of length $R$ so that
\[ \varrho^{1/2}\ge \|\widehat{f_\flat}\mathbf{1}_I\|_2\ge \|g\|_2\ge \tfrac 12{c}^{\frac12} \varrho^{1/2}  \]
This is a contradiction if $c$ is large.
\end{proof}

\subsection{Applying the structural decomposition}\label{sec:local-applystructure}

Let us use the decomposition from Lemma \ref{lemma:sharpflat} to refine the initial spatial decomposition from \eqref{eqn:flmdef}. For $\ell\in \{1,2\}$, choose smooth functions $\psi^{(\ell)}$, each of the form $\lambda\psi_0(\lambda \cdot)$ so that $f_\ell *_\ell \psi^{(\ell)}=f_\ell$. For $m\in\Z^2$ denote
\[ \eta_m (x) = \eta(\lambda^{\gamma} x - m) \]
Then with $f_{\ell,m} = \psi^{(\ell)} *_\ell (\eta_{m} f_\ell)$, 
\[ f_\ell = \sum_{m\in \Z^2} \eta_{m} f_\ell = \sum_{m\in\Z^2} f_{\ell,m} + \sum_{m\in \Z^2} \eta_{m} (\psi^{(\ell)}*_\ell f_\ell) - \psi^{(\ell)}*_\ell (\eta_{m} f_\ell). \] %
By the mean value theorem, the summand in the second sum on the right hand side is pointwise bounded by a constant times $\lambda^{\gamma-1}$.

Fix $\tau = \gamma+\kappa$ and a small parameter $\delta'>0$.
Suppose for the moment that $\ell=1$.
Fix $y\in\R$. Applying Lemma \ref{lemma:sharpflat} to each function $x\mapsto f_{1,m}(x,y)$, we obtain a decomposition
\[ f_{1,m} = f_{1,m,\flat} + f_{1,m,\sharp}, \]
where
\[
f_{1,m,\sharp}(x,y) = \sum_{n=1}^\mathcal{N} h_{1,n,m}(x,y) e^{i \alpha_{n,m}(y) x}
\]
with $\mathcal{N}=C\lambda^{\delta'}$, $\alpha_{n,m}$ measurable real-valued functions (which satisfy $|\alpha_{n,m}|\asymp \lambda$ if \eqref{eqn:localasm-strict} holds for $\ell_*=1$) and $h_{1,n,m}$ measurable functions with smooth fibers $h_{1,n,m}(\cdot,y)$ satisfying $|\partial_x^N h_{1,n,m}|\lesssim_N \lambda^{\tau N}$ (uniformly in $n,m$), 
and %
\[
\int_{\mathbb{R}} \int_{\R^2} \mathbf{1}_{|\xi_1| \le \lambda^\tau} 
|\widehat{\md^{(1)}_s f_{1,m,\flat}}(\xi)|^2\,d\xi\,ds
\lesssim 
\lambda^{-\deltaprime-3\gamma}. %
\]
Similarly, for $\ell=2$ we fix $x\in\R$ and apply Lemma \ref{lemma:sharpflat} to each function $y\mapsto f_{2,m}(x,y)$ to obtain a decomposition
\[ f_{2,m} = f_{2,m,\flat} + f_{2,m,\sharp}, \]
where
\[
f_{2,m,\sharp}(x,y) = \sum_{n=1}^{\mathcal{N}} h_{2,n,m}(x,y) e^{i \beta_{n,m}(x) y}
\]
with $\beta_{n,m}$ measurable real-valued functions satisfying $|\beta_{n,m}|\lesssim \lambda$ by \eqref{eqn:localasm-upper-2} (and $|\beta_{n,m}|\asymp \lambda$ if \eqref{eqn:localasm-strict} holds with $\ell_*=2$) and $h_{2,n,m}$ measurable functions with smooth fibers $h_{2,n,m}(x,\cdot)$ satisfying $|\partial_y^N h_{2,n,m}|\lesssim_N \lambda^{\tau N}$ (uniformly in $n,m$), 
and %
\[
\int_{\mathbb{R}} \int_{\R^2} \mathbf{1}_{|\xi_2| \le \lambda^\tau} 
|\widehat{\md^{(2)}_s f_{2,m,\flat}}(\xi)|^2\,d\xi\,ds
\lesssim 
\lambda^{-\deltaprime-3\gamma}. %
\]
It is convenient to spatially localize the functions $f_{\ell,m,\flat}, f_{\ell,m,\sharp}$.
Let $\widetilde{\eta}$ denote a smooth function that equals one on the support of $\eta$ and has an only slightly larger support than $\eta$. Denote $\widetilde{\eta}_m(x,y) = \widetilde{\eta}(\lambda^\gamma (x,y) -m)$.
Then write
\[ f_{\ell,m} = \widetilde{\eta}_{m} f_{\ell,m,\flat} + \widetilde{\eta}_{m} f_{\ell,m,\sharp} + (1-\widetilde{\eta}_m) (\psi^{(\ell)}*_\ell (\eta_{m} f_\ell)) \]
From the mean value theorem, 
 the third summand on the right-hand side enjoys a pointwise gain of $O(\lambda^{\gamma-1})$.
To summarize, we have arrived at a decomposition
\[
	f_\ell = f_{\ell,\flat} + f_{\ell,\sharp} + f_{\ell,\mathrm{err}}, 
	\]
	with
	\begin{align}
	 f_{\ell,\flat} &= \sum_{m\in\Z^2} \widetilde{\eta}_{m} f_{\ell,m,\flat},
	\label{eqn:sharp-f1}
	\\
	f_{1,\sharp}(x,y) &= \sum_{m\in\Z^2} \sum_{n=1}^\mathcal{N} 
	\widetilde{\eta}_m(x,y) {h}_{1,n,m}(x,y) e^{i\alpha_{n,m}(y) x}, 
	\\
	f_{2,\sharp}(x,y) &= \sum_{m\in\Z^2} \sum_{n=1}^\mathcal{N} 
	\widetilde{\eta}_m(x,y) {h}_{2,n,m}(x,y) e^{i\beta_{n,m}(x) y},
	\label{eqn:sharp-f2}
	\\
	f_{\ell,\mathrm{err}} &= \sum_{m\in\Z^2} f_{\ell,m,\mathrm{err}},%
	\label{eqn:ferrptwise}
	\end{align}
where the functions
\[f_{\ell,m,\mathrm{err}} = \eta_m(\psi^{(\ell)}*_\ell f_\ell) - \psi^{(\ell)}*_\ell (\eta_{m} f_\ell) +  (1-\widetilde{\eta}_m) (\psi^{(\ell)}*_\ell (\eta_{m} f_\ell))\]
satisfy $\|f_{\ell,m,\mathrm{err}}\|_\infty \lesssim \lambda^{\gamma-1}$ uniformly in $m$.
We use this decomposition to analyze our operator as follows:
\[ \LocTOp(f_1, f_2) = \mathrm{T}_\flat + \mathrm{T}_\sharp + \mathrm{T}_\mathrm{err}, \]
where
\begin{align*}
\mathrm{T}_\sharp &= \LocTOp(f_{1,\sharp}, f_{2,\sharp}), 
\\
\mathrm{T}_\flat &= \LocTOp(f_{1,\flat}, f_2) + \LocTOp(f_{1,\sharp}, f_{2,\flat}),
\\
\mathrm{T}_{\mathrm{err}} &= \LocTOp(f_{1,\sharp}, f_{2,\mathrm{err}}) + \LocTOp(f_{1,\mathrm{err}}, f_2). 
\end{align*}

The basic estimate \eqref{eqn:first-final} implies 
\begin{equation}\label{eqn:local-flat-final}
\|\mathrm{T}_\flat\|_1 \lesssim \lambda^{\frac12-\gamma} + \lambda^{-\frac{\kappa}4+\frac{5}{4}\epsilon_2} + \lambda^{-\frac{\delta'}4+\epsilon_2}.
\end{equation}
The triangle inequality and the pointwise bound for $f_{\ell,m,\mathrm{err}}$ give 
\begin{equation}\label{eqn:local-err-final}
\|\mathrm{T}_\mathrm{err}\|_1 \lesssim \lambda^{\gamma-1}.
\end{equation}
It only remains to treat the term $\mathrm{T}_\sharp$.

\subsection{Conclusion of proof of Theorem~\ref{mainresult}}\label{sec:coupdegrace}

Here we estimate the remaining term $\mathrm{T}_\sharp = \LocTOp(f_{1,\sharp}, f_{2,\sharp})$. From \eqref{eqn:sharp-f1}, \eqref{eqn:sharp-f2}, we have that $\|\mathrm{T_\sharp}\|_1$ is dominated by a sum of $O(\lambda^{2\delta'})$ terms of the form
\begin{equation}\label{eqn:cdg-mainterm}
\sum_{\mathbf{m}\in (\Z^2)^2} \iint \Big| \int e^{i (\alpha_{m_1}(y) t +  \beta_{m_2}(x) t^2)} H_\mathbf{m}(x,y,t) dt \Big| dx \,dy,
\end{equation}
where $\mathbf{m}=(m_1,m_2)\in \Z^2\times \Z^2$, $\alpha_{m_1}, \beta_{m_2}$ are measurable real-valued functions which by construction satisfy
\[ |\beta_{m_2}|\lesssim \lambda \quad \text{by \eqref{eqn:localasm-upper-2} and}\] 
\[ |\alpha_{m_1}| \asymp \lambda \quad \text{if \eqref{eqn:localasm-strict} holds with } \ell_*=1,\] 
\[ |\beta_{m_2}| \asymp \lambda \quad \text{if \eqref{eqn:localasm-strict} holds with } \ell_*=2.\] 
So far we have not exploited \eqref{eqn:localasm-strict}. This assumption will be crucial in this section.
Moreover, the function $H_{\mathbf{m}}$ takes the form
\begin{equation}\label{eqn:cdg-Hmdef}
H_{\mathbf{m}}(x,y,t) = (\widetilde{\eta}_{m_1} h_{1,m_1})(x+t,y) (\widetilde{\eta}_{m_2} h_{2,m_2})(x,y+t^2)\zeta(x,y,t),
\end{equation}
with $\widetilde{\eta}_{m}=\widetilde{\eta}(-m+\lambda^{\gamma} \cdot)$ supported in a cube $\widetilde{Q}_m$ of sidelength, say $10\lambda^{-\gamma}$ centered at $\lambda^{-\gamma}m$ and $h_{1,m}, h_{2,m}$ measurable functions with smooth fibers $h_{1,m}(\cdot,y)$, $h_{2,m}(x,\cdot)$ satisfying $\|\partial_\ell^N h_{\ell,m}\|_{\infty} \lesssim_N \lambda^{\tau N}$ uniformly in $m$, and $\zeta$ the compactly supported smooth function stemming from the definition of $\LocTOp$.

The problem of estimating \eqref{eqn:cdg-mainterm} is {\em global} in nature: that is, for a fixed $\mathbf{m}\in\widetilde{\mathfrak{I}}$ it is possible that the integration in $(x,y,t)$ yields no gain over the trivial bound  $O(\lambda^{-3\gamma})$ from the size of the support of $H_\mathbf{m}$ (one can see this by choosing $\alpha_{m_1}(y)=\alpha, \beta_{m_2}(x)=\beta$ to be constant functions so that $\alpha + 2\overline{t}\beta=0$, where $\overline{t}$ is some fixed point from the $t$-support of the integrand).
The challenge will be to show that this cannot happen for too many of the $\mathbf{m}$.

We begin with some technical preparations.
The arguments above utilized cubes 
$\{\widetilde{Q}_m\,:\,m\in\Z^2\}$ that are not pairwise disjoint, but their overlapping
is now inconvenient. Since this collection is finitely overlapping, it can be partitioned into finitely many subcollections, each of which consists of pairwise disjoint cubes. Thus \eqref{eqn:cdg-mainterm} is dominated by a sum of $O(1)$ terms of the form 
\begin{equation}\label{eqn:cdg-mainterm2}
\sum_{\substack{\mathbf{m}\in (\Z^2)^2,\\\widetilde{Q}_{m_1}\in \mathcal{Q}, \widetilde{Q}_{m_2}\in \widetilde{\mathcal{Q}}}} \iint \Big| \int e^{i (\alpha_{m_1}(y) t + \beta_{m_2}(x) t^2)} H_{m_1,m_2}(x,y,t) dt \Big| dx \,dy,
\end{equation}
where each of the collections $\mathcal{Q}, \widetilde{\mathcal{Q}}$ consists of pairwise disjoint cubes.
By compact support of the function $\zeta$, the sum over $\mathbf{m}$ is finite.
Indeed, define $\widetilde{\mathfrak{I}}$ as the set of $\mathbf{m}\in(\Z^2)^2$ such that $\widetilde{Q}_{m_1}\in\mathcal{Q}$, $\widetilde{Q}_{m_2}\in\widetilde{\mathcal{Q}}$ and \[ \iint \Big| \int e^{i (\alpha_{m_1}(y) t + \beta_{m_2}(x) t^2)} H_{\mathbf{m}}(x,y,t) dt \Big| dx \,dy \not=0. \]
Then by the same reasoning as in \eqref{eqn:interactingcount} we have
\[ \#\widetilde{\mathfrak{I}} \lesssim \lambda^{3\gamma}. \]
For each $\mathbf{m}\in \widetilde{\mathfrak{I}}$ we fix $(\overline{x}_\mathbf{m},\overline{y}_\mathbf{m},\overline{t}_\mathbf{m})$ in the support of $H_\mathbf{m}$ (the support is a cube of sidelength $O(\lambda^{-\gamma})$). For each $(x,y,t)$ in the support of $H_{\mathbf{m}}$ we have that the $t$-derivative of the phase function in \eqref{eqn:cdg-mainterm} is
\begin{equation}\label{eqn:cdgphasecomp1}
\alpha_{m_1}(y) + 2t\beta_{m_2}(x) = \alpha_{m_1}(y) + 2\overline{t}_\mathbf{m}\beta_{m_2}(x) + O(\lambda^{1-\gamma}).
\end{equation}
Here we used that $|\beta_{m_2}(x)|\lesssim \lambda$ by \eqref{eqn:localasm-upper-2}. Choose a small parameter $\rho>0$ and suppose that $(x,y,\mathbf{m})$ are such that
\begin{equation}\label{eqn:cdgnonstat}
|\alpha_{m_1}(y) + 2\overline{t}_\mathbf{m}\beta_{m_2}(x)| \ge \lambda^{\tau + \rho}.
\end{equation}
Using \eqref{eqn:cdgphasecomp1}, $\tau>\gamma>\tfrac12>1-\gamma$ and $\|\partial_t^N H_\mathbf{m}\|_\infty = O(\lambda^{\tau N})$, we can integrate by parts in the $t$-integral to obtain
\[ \Big| \int e^{i (\alpha_{m_1}(y) t + \beta_{m_2}(x) t^2)} H_{\mathbf{m}}(x,y,t) dt \Big| \lesssim_N \lambda^{-N} \]
for every $N>0$.
Therefore, \eqref{eqn:cdg-mainterm2} is majorized by
\begin{equation}\label{eqn:cdg-nonstat-term}
\sum_{\mathbf{m}\in\widetilde{\mathfrak{I}}} \iint O(\lambda^{-N}) \mathbf{1}_{|\alpha_{m_1}(y) + 2\overline{t}_\mathbf{m}\beta_{m_2}(x)| \ge \lambda^{\tau + \rho}} dx \,dy
\end{equation}
plus
\begin{equation}\label{eqn:cdg-themainterm}
\sum_{\mathbf{m}\in\widetilde{\mathfrak{I}}} \iint \Big| \int e^{i (\alpha_{m_1}(y) t +  \beta_{m_2}(x) t^2)} H_{\mathbf{m}}(x,y,t) dt \Big| \mathbf{1}_{|\alpha_{m_1}(y) + 2\overline{t}_\mathbf{m}\beta_{m_2}(x)| \le \lambda^{\tau + \rho}} dx \,dy.
\end{equation}
The term \eqref{eqn:cdg-nonstat-term} is $\lesssim_N \lambda^{-N}$ for all $N>0$. It remains to estimate \eqref{eqn:cdg-themainterm}. In this term 
there is no reason to expect cancellation from the $t$--integration. 
Instead, we will demonstrate 
that the scenario 
$|\alpha_{m_1}(y) + 2\overline{t}_\mathbf{m}\beta_{m_2}(x)| \le \lambda^{\tau + \rho}$ 
occurs for only relatively few values of $\mathbf{m}$.

For this purpose it will be convenient to change perspective and interchange the order of the  $(x,y,t)$ integration with the summation over $\mathbf{m}$. Using the triangle inequality, \eqref{eqn:cdgphasecomp1} and \eqref{eqn:cdg-Hmdef}, we estimate
\[ \eqref{eqn:cdg-themainterm} \lesssim \iiint_K \sum_{\mathbf{m}\in\widetilde{\mathfrak{I}}} \mathbf{1}_{(x+t,y)\in Q_{m_1}} \mathbf{1}_{(x,y+t^2)\in Q_{m_2}} \mathbf{1}_{|\alpha_{m_1}(y) + 2t\beta_{m_2}(x)| \le 2\lambda^{\tau + \rho}} dx \,dy \,dt,   \]
where $K$ denotes the compact support of $\zeta$.
Recall that for each $\mathbf{m}\in\widetilde{\mathfrak{I}}$ both of the cubes $\widetilde{Q}_{m_1}\in\mathcal{Q}$ and $\widetilde{Q}_{m_2}\in\widetilde{\mathcal{Q}}$ are chosen from collections consisting of pairwise disjoint cubes. 
As a consequence, for each fixed $(u,v)\in\R^2$ there exists at most one $m$ with $(u,v)\in\widetilde{Q}_{m}\in\mathcal{Q}$. This defines a measurable function \[\mathfrak{m}:\R^2\to \Z^2\quad\text{with}\quad \mathfrak{m}(u,v) = m\;\text{if}\; (u,v)\in \widetilde{Q}_m \in \mathcal{Q} \] 
and $\mathfrak{m}(u,v)=0$ if no such $m$ exists. In the same way we define a function $\widetilde{\mathfrak{m}}:\R^2\to \Z^2$ so that $\widetilde{\mathfrak{m}}(u,v)$ equals the unique $m$ so that $(u,v)\in\widetilde{Q}_{m}\in \widetilde{Q}$, or $0$ if no such $m$ exists.
Then for every $(x,y,t)\in K$,
\[ \sum_{\mathbf{m}\in\widetilde{\mathfrak{I}}} \mathbf{1}_{(x+t,y)\in Q_{m_1}} \mathbf{1}_{(x,y+t^2)\in Q_{m_2}} \mathbf{1}_{|\alpha_{m_1}(y) + 2t\beta_{m_2}(x)| \le 2\lambda^{\tau + \rho}} \le  \mathbf{1}_{|\widetilde{\alpha}(x+t,y) - t\widetilde{\beta}(x,y+t^2)| \le \varepsilon },\]
where we have set
\[ \widetilde{\alpha}(x+t,y) = \lambda^{-1} \alpha_{\mathfrak{m}(x+t,y)}(y),\quad \widetilde{\beta}(x,y+t^2) = - \lambda^{-1} \beta_{\widetilde{\mathfrak{m}}(x,y+t^2)}(x),\quad \varepsilon= 2\lambda^{\tau+\rho-1}. \]
Therefore,
\begin{equation}\label{eqn:cdg-themainterm-2}
\eqref{eqn:cdg-themainterm}\lesssim |\{ (x,y,t)\in K\,:\, |\widetilde{\alpha}(x+t,y) - 2t\widetilde{\beta}(x,y+t^2)| \le \varepsilon\}|.
\end{equation}
Thus we aim to estimate the measure of the set on the 
right-hand side of \eqref{eqn:cdg-themainterm-2}. The following result is proved in \S \ref{sec:sublevel} below.

\begin{lemma} \label{lemma:sublevel} %
Let $K\subset\mathbb{R}^2\times (0,\infty)$ be a compact set and $\alpha,\beta$ measurable functions $\R^2\to\R$. Suppose that either $|\alpha|\asymp 1$ or $|\beta|\asymp 1$. 
Then there exist $\sigma, C\in (0,\infty)$ such that for all $\varepsilon\in (0,1]$,
\begin{equation} \label{lastsublevel}
\big|\{(x,y,t)\in K: |\alpha(x+t,y) - 2t\beta(x,y+t^2)| \le\eps\}\big|
\le C\eps^\sigma.  \end{equation}
The constants $C$ and $\sigma$ only depend on $K$ and not on the measurable functions $\alpha,\beta$.
\end{lemma}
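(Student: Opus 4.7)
By the symmetry of the hypothesis I may assume $|\alpha|\asymp 1$. A short argument then shows that on $S_\varepsilon$ we also have $|\beta|\asymp 1$ for $\varepsilon$ sufficiently small, using that $t$ is bounded away from $0$ on the compact set $K$. Hence both $\alpha$ and $\beta$ are essentially bounded above and below on the support relevant to the sublevel estimate.

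My first step would be the area-preserving substitution $(x,y,t)\mapsto(u,v,t)=(x+t,\,y+t^2,\,t)$, which has Jacobian $1$ and yields
\[ |S_\varepsilon|=\iiint_{K'}\mathbf{1}\bigl(|\alpha(u,v-t^2)-2t\beta(u-t,v)|\le\varepsilon\bigr)\,du\,dv\,dt. \]
The coefficient $2t$ in front of $\beta$ here provides a geometric nondegeneracy in the $t$-direction, ultimately expressing the curvature of the parabola $y\mapsto y+t^2$. Next I would discretize $\alpha$ and $\beta$ into level sets $L_a=\alpha^{-1}([a\delta,(a+1)\delta))$, $M_b=\beta^{-1}([b\delta,(b+1)\delta))$ at a scale $\delta\ge\varepsilon$ to be optimized. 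For $(u,v,t)\in S_\varepsilon$, unique indices $a,b$ with $(u,v-t^2)\in L_a$ and $(u-t,v)\in M_b$ exist, and the sublevel condition forces the arithmetic constraint $|a-2tb|\lesssim 1$. Since $|b|\asymp\delta^{-1}$, this pigeonholes $t$ into an interval $T_{a,b}$ of length $O(\delta)$ for each relevant pair.

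For each pair $(a,b)$, the slice
\[ N_{a,b}=\int_{T_{a,b}}\int\mathbf{1}_{L_a}(u,v-t^2)\,\mathbf{1}_{M_b}(u-t,v)\,du\,dv\,dt \]
admits two complementary basic bounds: a trivial Cauchy--Schwarz estimate $N_{a,b}\lesssim \delta\,|L_a|^{1/2}|M_b|^{1/2}$, and a Strichartz-type parabolic bound $N_{a,b}\lesssim|L_a|^{2/3}|M_b|^{2/3}$ (as used in \S\ref{sec:localop}, via \cite{Str70}). The plan is to interpolate between these, sum over the Diophantine-constrained pairs $(a,b)$, and optimize $\delta$ to obtain $|S_\varepsilon|\lesssim\varepsilon^\sigma$ for some $\sigma>0$.

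I expect the main obstacle to lie in this final summation step. Both of the above bounds, when summed naively over the $O(\delta^{-2})$ relevant pairs, yield only trivial estimates. Extracting a genuine gain requires a dichotomy based on the sizes of $|L_a|,|M_b|$: for small level-set masses one applies the Strichartz bound, while for large masses one uses Cauchy--Schwarz combined with the short interval length $|T_{a,b}|$. An additional duplication via Cauchy--Schwarz in $t$ may be required to exploit that for each fixed $t$ only $O(1)$ pairs $(a,b)$ are simultaneously active, giving an orthogonality among the level sets. The resulting exponent $\sigma$ will be small but positive, and the constants depend only on $K$ (through $\sum_a|L_a|,\sum_b|M_b|\lesssim|K|$), not on $\alpha,\beta$, as required.
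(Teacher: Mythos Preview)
Your proposal is a plan rather than a proof, and the step you yourself flag as the main obstacle---the summation over level-set pairs---is genuinely missing, not merely technical. A quick check shows the dichotomy you sketch does not close. Take the extremal configuration in which all level sets have size $|L_a|\asymp|M_b|\asymp\delta$ (consistent with $\sum_a|L_a|\lesssim 1$ and $\asymp\delta^{-1}$ values of each index). Then the Cauchy--Schwarz bound gives $\sum_{a,b}\delta|L_a|^{1/2}|M_b|^{1/2}\asymp\delta^{-2}\cdot\delta^2=1$ and the Strichartz bound gives $\sum_{a,b}|L_a|^{2/3}|M_b|^{2/3}\asymp\delta^{-2}\cdot\delta^{4/3}=\delta^{-2/3}$; neither, nor any interpolant, beats the trivial estimate. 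The short length $|T_{a,b}|\asymp\delta$ is already fully consumed by the first bound, and the ``orthogonality'' you invoke (that for fixed $t$ only $O(1)$ pairs are active) is just the statement that the level sets partition, already implicit in the decomposition. The deeper problem is that $\alpha,\beta$ are merely measurable, so as $t$ varies the indices $a=a(u,v,t)$, $b=b(u,v,t)$ can jump arbitrarily; no harmonic-analytic smoothing in $t$ will produce a gain without first constraining this behavior.

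The paper's argument is of an entirely different nature and is built precisely to \emph{eliminate} the measurable functions. After a change of variables it fixes a base point $\overline{z}$ and, via an iterated Fubini ``tower'' construction, produces a set $\mathscr{A}\subset I^3$ of measure $\gtrsim|\mathcal{E}|^7$ consisting of triples $(t_1,t_2,t_3)$ for which three chained inequalities $|\alpha(\cdots)-2t_j\beta(\cdots)|\le\varepsilon$ hold at linked arguments. Dividing successive inequalities cancels both $\alpha$ and $\beta$, leaving the constraint $|\alpha_0\,t_3^{-1}t_2t_1^{-1}-\beta(\overline{z}+\theta(\mathbf{t}))|\lesssim\varepsilon$ with $\alpha_0=\alpha(\overline{z})$ fixed and $\theta=(\theta_1,\theta_2)$ explicit. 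One then flows along the vector field $V=\nabla\theta_1\times\nabla\theta_2$, along whose integral curves the residual $\beta$-term is constant, reducing to a sublevel set of the analytic function $\mathbf{t}\mapsto t_3^{-1}t_2t_1^{-1}$ along those curves; this is handled by a van~der~Corput/\L{}ojasiewicz argument, with some care near the diagonal $t_1=t_2=t_3$ where $V$ degenerates. It is this algebraic elimination---not a smoothing estimate---that circumvents the arbitrary measurability of $\alpha,\beta$, and your plan contains no analogue of it.
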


\begin{remarka}
Under the assumptions of the lemma, the additional condition $|\alpha|+|\beta|\lesssim 1$ can be assumed without loss of generality:
for example, if we assume $|\alpha|\asymp 1$, then in order for the inequality $|\alpha(x+t,y)-2t\beta(x,y+t^2)|\le \varepsilon$ to hold for some $(x,y,t)\in K$, we must have $|\beta|\lesssim 1$. Thus we may replace $\beta$ by $\beta$ multiplied with a suitable characteristic function.
\end{remarka}

Combining the previous estimates \eqref{eqn:cdg-nonstat-term}, \eqref{eqn:cdg-themainterm-2} and Lemma \ref{lemma:sublevel} we obtain
\[ \|\mathrm{T}_\sharp\|_1 \lesssim \lambda^{2\delta' + \sigma(\tau+\rho-1)}. \]
Here we used \eqref{eqn:localasm-strict}. Note that the exponent is negative if $\delta'>0$, $\rho>0$, $\tau=\gamma+\kappa$ are chosen small enough. Together with \eqref{eqn:local-flat-final} and \eqref{eqn:local-err-final} this concludes the proof of Theorem \ref{mainresult}.

\subsection{Proof of the sublevel set estimate Lemma \ref{lemma:sublevel}}
\label{sec:sublevel}
The proof is in the spirit of arguments in \cite[\S 11]{Chr20}, but involves certain idiosyncrasies that are somewhat technical.
We first give the proof under the assumption that $|\alpha|\asymp 1$.

Write $z=(x,y)\in\R^2$.
Changing variables $(x,y,t)\mapsto (x-t,y,t)$ it suffices to show that the set
\begin{equation} \label{lastsublevel2}
\mathcal{E} = \{(z,t)\in K: 
|\alpha(z) - 2t\beta(z+(-t,t^2))| \le \eps\}
\end{equation}
satisfies $|\mathcal{E}| \lesssim \eps^\tau$.
Moreover, it will be convenient to assume that $K=[0,1]^2\times I$, where $I\subset (0,\infty)$ is a closed interval of length one. This can be achieved by an affine transformation in $z$ and a rescaling in $t$. We may also assume that $|\mathcal{E}|>0$ since otherwise there is nothing to show.\\

\noindent {\em Claim.}
There exist a point $\overline{z}= (\overline{x},\overline{y})\in\mathbb{R}^2$,
and a measurable set $\mathscr{A}\subset I^3$ so that $|\mathscr{A}|\gtrsim |\mathcal{E}|^7$ and for every $(t_1,t_2,t_3)\in \mathscr{A}$,
\begin{equation} \label{iterated}
\left\{
\begin{aligned}
&|\alpha(\overline{z})-2t_1\beta(\overline{z}+(-t_1,t_1^2))| \le \varepsilon,
\\
&|\alpha(\overline{z}+(-t_1,t_1^2)-(-t_2,t_2^2))-2t_2\beta(\overline{z}+ (-t_1,t_1^2))| \le \varepsilon,
\\
&|\alpha(\overline{z}+(-t_1,t_1^2)-(-t_2,t_2^2))-2t_3\beta(\overline{z}+(-t_1,t_1^2)-(-t_2,t_2^2)+(-t_3,t_3^2))| \le \varepsilon.
\end{aligned} \right.  \end{equation}

\begin{proof}[Proof of claim]
Define $\mathcal{E}'_0\subset[0,1]^2$ to be 
\[ 
\mathcal{E}'_0 = \big\{
z\in \R^2:
\big|\{t\in I: (z,t)\in\mathcal{E}\} \big| \ge \tfrac12 |\mathcal{E}|\big\}.
\]
We have $|\mathcal{E}'_0|\ge \tfrac12 |\mathcal{E}|$ since by Fubini's theorem,
\begin{equation}\label{eqn:sublevelpf-fubini1}
|\mathcal{E}| = \int_{\mathcal{E}_0'} |\{t\in I: (z,t)\in\mathcal{E}\}| dz + \int_{[0,1]^2\setminus \mathcal{E}'_0} |\{t\in I: (z,t)\in\mathcal{E}\}| dz
\end{equation}
\[ \le |\mathcal{E}_0'| + \tfrac12  |\mathcal{E}|. \]
Define
\[ \mathcal{E}_1 = \{(w,t)\in\mathcal{E}: w\in\mathcal{E}'_0 \}.\]
By Fubini's theorem and by definition of $\mathcal{E}_0'$,
\begin{equation}\label{eqn:sublevelpf-fubini2}
|\mathcal{E}_1| = \int_{\R^2} \Big( \int_I \mathbf{1}_{\mathcal{E}}(w,t) dt \Big) \mathbf{1}_{\mathcal{E}_0'}(w) dw \ge \tfrac12 |\mathcal{E}| \cdot |\mathcal{E}_0'| \ge \tfrac 14 |\mathcal{E}|^2.
\end{equation}
Next consider 
\[ \mathcal{E}'_1 = \big\{w\in \R^2:
\big|\{t\in I: (w+(-t,t^2),t)\in\mathcal{E}_1\} \big| 
\ge \tfrac12 |\mathcal{E}_1|\big\}.\] 
By a similar argument as in \eqref{eqn:sublevelpf-fubini1} we obtain $|\mathcal{E}_1'|\ge \tfrac12 |\mathcal{E}_1|\ge \tfrac18 |\mathcal{E}|^2$. 
Similarly, define
\[ \mathcal{E}_2 = \{(w,t)\in\mathcal{E}_1: w\in\mathcal{E}'_1\}\]
and observe from the same argument as in \eqref{eqn:sublevelpf-fubini2} that $|\mathcal{E}_2|\ge \frac12 |\mathcal{E}_1| \cdot |\mathcal{E}_1'|\ge 2^{-6} |\mathcal{E}|^4$.
Finally, 
\[ \mathcal{E}'_2 = \big\{w\in\mathbb{R}^2:
\big|\{t\in I: (w-(-t,t^2),t)\in\mathcal{E}_2\}\big|
\ge \tfrac12 |\mathcal{E}_2|\big\} \]
satisfies $|\mathcal{E}'_2|\ge \frac12 |\mathcal{E}_2| \ge 2^{-7} |\mathcal{E}|^4>0$. In particular, $\mathcal{E}'_2$ is nonempty.

Now choose any $\overline{z}\in\mathcal{E}'_2$ and define
\begin{gather}
U=\{t\in I: (\overline{z}-(-t,t^2),t)\in\mathcal{E}_2\},
\\
U_{t_1} = \{t\in I: (\overline{z}-(-t_1,t_1^2)+(-t,t^2),t)\in\mathcal{E}_1\}
\ \text{ for each $t_1\in U$},
\\
U_{t_1,t_2}=\{t\in I: (\overline{z}-(-t_1,t_1^2)+(-t_2,t_2^2),t)\in\mathcal{E} \}
\ \text{ for each $t_1\in U$ and $t_2\in U_{t_1}$. }  
\end{gather}
Finally, let 
\[ \mathscr{A} = \{ (t_1,t_2,t_3)\in I^3\,:\, t_1\in U,\, t_2\in U_{t_1},\, t_3\in U_{t_1,t_2} \}. \]
By Fubini's theorem, 
\[ |\mathscr{A}| = \int_I \mathbf{1}_{U}(t_1) \int_I \mathbf{1}_{U_{t_1}}(t_2) \int_I \mathbf{1}_{U_{t_1,t_2}}(t_3) dt_3 dt_2 dt_1\ge |U| \cdot (\inf_{t_1\in U} |U_{t_1}| ) \cdot (\inf_{t_1\in U, t_2\in U_{t_1}} |U_{t_1,t_2}|), \]
which is $\gtrsim |\mathcal{E}|^4 |\mathcal{E}|^2 |\mathcal{E}| = |\mathcal{E}|^7$, concluding the proof of the claim.
\end{proof}

Set $\alpha_0 = \alpha(\overline{z})$. Then $|\alpha_0|\asymp 1$ by assumption. Let us write $\mathbf{t}=(t_1,t_2,t_3)\in\R^3$.
By \eqref{iterated}, the function
\[
F(\mathbf{t}) =  \alpha_0 t_3^{-1}t_2t_1^{-1} 
- \beta(\overline{x} -t_1+t_2-t_3,\overline{y} + t_1^2-t_2^2+t_3^2)
\]
satisfies
$\big| F(\mathbf{t}) \big|\lesssim \eps$ for every $\mathbf{t}\in\mathscr{A}$.

We will show that sublevel sets of such functions $F$
are small, uniformly in all measurable functions $\beta$. Define
\[ \theta_1(\mathbf{t})  = -t_1+t_2-t_3
\ \  \text{ and } \ \ 
\theta_2(\mathbf{t}) = t_1^2 - t_2^2  + t_3^2
\ \  \text{ and } \ \ 
\vartheta(\mathbf{t}) = t_3^{-1}t_2t_1^{-1}.  \]
Consider the vector field in $\R^3$ defined by
\[V(\mathbf{t}) = (\nabla \theta_1 \times \nabla\theta_2) (\mathbf{t}) = 2(t_3-t_2)\partial_{t_1} + 2(t_3-t_1)\partial_{t_2} + 2(t_2-t_1)\partial_{t_3}.\]
$V$ vanishes identically on the line 
$\Delta=\{(t_1,t_2,t_3): t_1=t_2=t_3\}\subset\mathbb{R}^3$,
but vanishes nowhere else.

Let $\gamma:\R^3\times \R\to \R^3$ denote the flow associated to $V$. 
For generic $\mathbf{t}$, the function
\[ s\mapsto F( \gamma(\mathbf{t}_0, s) ) = \alpha_0 \vartheta(\gamma(\mathbf{t}_0,s)) - \mathrm{constant} \]
is analytic and nonconstant.
Indeed, the functions $\theta_1, \theta_2$ are by construction constant along the integral curves of $V$, hence so is the function
$\mathbf{t}\mapsto \beta(\overline{x} + \theta_1(\mathbf{t}), 
\overline{y} + \theta_2(\mathbf{t}))$.
On the other hand,
\begin{equation} \label{Vvartheta}
V\vartheta(\mathbf{t}) = 2t_1^{-2}t_3^{-2}
\big( (t_2-t_3)t_1^2 + (t_3-t_1)t_2^2 + (t_1-t_2)t_3^2 \big)
\end{equation}
is generically nonzero.
Since we are working in a bounded region in which the absolute values of the coordinates
$t_j$ are bounded below by a strictly positive quantity,
the factor $t_1^{-2}t_3^{-2}$ is bounded above and below
by finite positive constants.

To diagonalize the system underlying the vector field, we introduce the coordinates $\mathbf{u} = J\mathbf{t}$ with $J\in\R^{3\times 3}$ given by 
\[ (u_1,u_2,u_3)= (t_2-t_1, t_2 - t_3, t_1 - t_2 + t_3).\]
Observe that $|\det J|=1$. Let $\Omega=J (I^3)\subset [-1,1]^2\times (I-I+I)$. In $\mathbf{u}$-coordinates, the integral curves of the vector field degenerate along the line $\{u_1=u_2=0\}$. We perform an additional dyadic decomposition of the region $\Omega$ by
\[ \mathbf{1}_\Omega \le \sum_{k=0}^\infty \sum_{|d|\lesssim 2^k} \mathbf{1}_{\Omega_{k,d}}, \]
where for integers $k$ and $d$,
\[\Omega_{k,d} = \{ \mathbf{u}\in\Omega\,:\, 2^{-k-1}\le \max(|u_1|,|u_2|)\le 2^{-k},\, |u_3-2^{-k}d|\le 2^{-k} \}. \]
This decomposition becomes necessary because, as we will see below, the integral curves of the vector field are hyperbolae, which causes an unfortunate lack of compactness. Each region $\Omega_{k,d}$ has measure $\approx 2^{-3k}$ and will be treated separately. That is, we begin by estimating
\[ |\mathscr{A}| = \int_{\R^3} \mathbf{1}_\mathscr{A} = \int_{\R^3} \mathbf{1}_{\Omega\cap J\mathscr{A}} \le \sum_{k=0}^\infty \sum_{|d|\lesssim 2^k} |\Omega_{k,d}\cap J\mathscr{A}|. \]
Fix $k\ge 0$ and $|d|\lesssim 2^k$. To estimate $|\Omega_{k,d}\cap J\mathscr{A}|$ we introduce normalized coordinates 
\[ \mathbf{v}= (v_1,v_2,v_3) = 2^k (u_1,u_2,u_3 - 2^{-k} d) = \Lambda_{k,d} \mathbf{u} \]
so that the $\mathbf{u}$-region $\Omega_{k,d}$ is mapped into the $\mathbf{v}$-region 
\[\Box=\{ \mathbf{v}=(v_1,v_2,v_3)\,:\,2^{-1}\le \max(|v_1|,|v_2|)\le 1,\, |v_3|\le 1\}\supset \Lambda_{k,d} \Omega_{k,d}.\]
In $\mathbf{v}$-coordinates, the vector field $V$ is given by
\[ \tfrac12 \tilde{V}(\mathbf{v}) = v_1 \partial_{v_1} - v_2 \partial_{v_2}, \]
so the integral curves are given in $\mathbf{v}$--coordinates by
\[ \gamma:\R^3\times \R\to \R^3,\quad \gamma(c; s) = (c_1 e^{2s}, c_2 e^{-2s}, c_3). \]
If we set $c_\ell=\pm 1$ for $\ell=1,2$, we obtain a foliation of the half-space $\{ \pm v_\ell>0 \}$ by one-dimensional integral curves (each of which is a hyperbola). Define coordinate transformations
\[ \varphi_{1,\pm} : \R^3 \to \{ \pm v_1 > 0\},\quad (w_1,w_2,w_3)\mapsto \gamma( (\pm 1,w_2,w_3); w_1), \]
\[ \varphi_{2,\pm} : \R^3 \to \{ \pm v_2 >0 \},\quad (w_1,w_2,w_3)\mapsto \gamma( (w_2,\pm 1,w_3); w_1). \]
One verifies that each $\varphi_{\ell,\pm}$ is a diffeomorphism and $\det D\varphi_{\ell,\pm} \equiv \pm 2$. 
We decompose $\Box$ into four rectangular boxes by
\[ \Box = \bigcup_{\ell\in\{1,2\}, \pm} \Box_{\ell,\pm}\quad\text{with}\quad \Box_{\ell,\pm} = \Box\cap \{ \pm v_\ell \ge 2^{-1} \}. \]
The motivation for this decomposition (and the previous $(k,d)$ decomposition) is that we now have compactness in the $\mathbf{w}$-coordinates. For instance,
\[ \varphi^{-1}_{1,+}(\Box_{1,+})\subset [-\tfrac12\log(2),0] \times [-1,1]^2.\]
We are ready to calculate
\[ |\Omega_{k,d}\cap J\mathscr{A}| = 2^{-3k} |\Lambda_{k,d}\Omega_{k,d}\cap \Lambda_{k,d} J \mathscr{A}| = 2^{-3k+1} \sum_{\ell\in\{1,2\},\pm} |\varphi^{-1}_{\ell,\pm}(\Lambda_{k,d}\Omega_{k,d}\cap \Box_{\ell,\pm})\cap  \varphi^{-1}_{\ell,\pm}  \Lambda_{k,d} J \mathscr{A}|, \]
Recalling the definition of $\mathscr{A}$ and setting
\[ G_{\ell,\pm,k,d} = F\circ J^{-1}\circ \Lambda_{k,d}^{-1}\circ \varphi_{\ell,\pm},\]
we obtain that $|\Omega_{k,d}\cap J\mathscr{A}|$ is dominated by
\[ 2^{-3k+1} \sum_{\ell\in\{1,2\},\pm}  |\{w\in\mathcal{K}_{\ell,\pm,k,d}\,:\,|G_{\ell,\pm,k,d}(w)|\lesssim \varepsilon \}|, \]
where $\mathcal{K}_{\ell,\pm,k,d} = \varphi^{-1}_{\ell,\pm}(\Lambda_{k,d}\Omega_{k,d}\cap \Box_{\ell,\pm})\subset \varphi_{\ell,\pm}^{-1}(\Box_{\ell,\pm})$ is a compact set.
By construction we have
\begin{equation}\label{eqn:sublevel-keyderiv0}
\partial_{w_1} G_{\ell,\pm,k,d}(w) = \alpha_0 \tilde{V} (\vartheta\circ J^{-1}\circ \Lambda_{k,d}^{-1} )|_{\varphi_{\ell,\pm}(w)}.    
\end{equation} 
The quantity $\tilde{V} (\vartheta\circ J^{-1}\circ \Lambda_{k,d}^{-1} )|_\mathbf{v}$ can be seen to equal
\begin{equation}\label{eqn:sublevel-keyderiv1}
2^{-3k} R_{k,d}(\mathbf{v}) P(\mathbf{v}),
\end{equation}
where $P$ is a homogeneous polynomial of degree $3$ that is independent
of $k$ and does not vanish identically, and
\[R_{k,d}(\mathbf{v}) = ( 2^{-k} (v_2+v_3+d))^{-2} (2^{-k} (v_1+v_3+d))^{-2}.\]
Observe that $|R_{k,d}(\mathbf{v})|\asymp 1$ if $\mathbf{v}\in \Lambda_{k,d}\Omega_{k,d}\subset \Lambda_{k,d} J I^3$. As a consequence,
\begin{equation}\label{eqn:sublevel-keyderivest}
|\partial_{w_1} G_{\ell,\pm,k,d}(w)| \approx 2^{-3k} |P(\varphi_{\ell,\pm}(w))|, \end{equation}
if $w\in \mathcal{K}_{\ell,\pm,k,d}$. 
By a well-known estimate in the spirit of van der Corput's lemma there exists $a>0$ so that
\begin{equation}\label{eqn:sublevel-deriv1}
|\{ w\in\mathcal{K}_{\ell,\pm}\,:\,|P(\varphi_{\ell,\pm}(w))|\le \varepsilon \}| \lesssim \varepsilon^a.
\end{equation}
We claim that the estimate \eqref{eqn:sublevel-deriv1} implies
\begin{equation}\label{eqn:sublevel-penult}
|\{ w\in \mathcal{K}_{\ell,\pm,k,d}\,:\, |G_{\ell,\pm,k,d}(w)| \lesssim\varepsilon\}|  \lesssim 2^{Ck} \varepsilon^c,
\end{equation}
uniformly in $k,d$, where $c,C>0$ are some fixed constants. This would imply
\[ |\Omega_{k,d}\cap J\mathscr{A}| \lesssim 2^{Ck} \varepsilon^c \]
for some fixed constants $c,C>0$ (recall that $c,C$ may vary from line to line). Together with the trivial estimate $|\Omega_{k,d}\cap J\mathscr{A}|\lesssim 2^{-3k}$ we would then obtain
\[ |\mathscr{A}| \lesssim \sum_{k=0}^\infty \sum_{|d|\lesssim 2^k} \min( 2^{Ck} \varepsilon^{c}, 2^{-3k}) \lesssim \varepsilon^{c'} \]
and thus $|\mathcal{E}|\lesssim |\mathscr{A}|^{1/7}\lesssim \varepsilon^{c'/7}.$ This concludes the proof up to the verification of \eqref{eqn:sublevel-penult}.

\begin{proof}[Proof of \eqref{eqn:sublevel-penult}]
To simplify notation we assume $(\ell,\pm)=(1,+)$ in the following and suppress $\ell,\pm$ from subscripts (in particular, we write $\varphi=\varphi_{1,+}$, $\mathcal{K}_{k,d}=\mathcal{K}_{\ell,\pm,k,d}$, {\em etc.}). The other three cases of $\ell,\pm$ follow in the same way. There are two delicate issues that we need to deal with. One is that while $G_{k,d}$ is analytic in $w_1$, it is only measurable as a function of all three variables $(w_1,w_2,w_3)$. We deal with this by observing that $\partial_1 G_{k,d}$ is analytic in all three variables and make use of {\L}ojasiewicz's inequality. The second issue is that we need to track the dependence of the constants on $k$. A convenient way to do this is to apply {\L}ojasiewicz's inequality to $P\circ \varphi$ rather than $\partial_1 G_{k,d}$, motivated by \eqref{eqn:sublevel-keyderivest}. A subtlety here is that \eqref{eqn:sublevel-keyderivest} only applies on $\mathcal{K}_{k,d}$. %

We have $\mathcal{K}_{k,d}\subset \mathcal{K}=[-\tfrac12\log(2),0]\times [-1,1]^2$. Let ${\mathcal{K}^*}$ denote an open $2^{-10}$-neighborhood of $\mathcal{K}$. The zero set of $P\circ\varphi$ will be denoted 
\[ \mathcal{Z} = \{ w\in {\mathcal{K}^*}\,: P(\varphi(w))=0 \}.  \]
By {\L}ojasiewicz's inequality \cite{Loja1959}, there exists $b>0$ so that
\begin{equation}\label{eqn:loj1}
|P(\varphi(w))| \gtrsim \mathrm{dist}(w, \mathcal{Z})^b
\end{equation}
for all $w\in \mathcal{K}$.
Cover $\mathcal{K}_{k,d}$ with a grid of closed (axis-aligned) cubes $Q\subset \mathcal{K}$ with pairwise disjoint interiors, each of sidelength $\rho$, where $\varepsilon$ is sufficiently small and $\rho$ is to be determined with $\varepsilon\ll \rho\ll 1$.
Denote the collection of all these cubes by $\mathcal{Q}$ (this depends on $k,d$, but in a harmless way). Decompose
\[ \mathcal{Q} = \mathcal{Q}_{\mathrm{near}} \cup \mathcal{Q}_{\mathrm{far}} \cup \mathcal{Q}_\mathrm{bdry}, \]
where $\mathcal{Q}_\mathrm{near}$ consists of all $Q\in\mathcal{Q}$ so that $Q\subset \mathcal{K}_{k,d}$ and $\mathrm{dist}(Q,\mathcal{Z})\le \rho$, $\mathcal{Q}_\mathrm{far}$ consists of all $Q\in\mathcal{Q}$ so that $Q\subset \mathcal{K}_{k,d}$ and $\mathrm{dist}(Q,\mathcal{Z})>\rho$ and $\mathcal{Q}_{\mathrm{bdry}}$ consists of the remaining cubes (those intersecting the boundary of $\mathcal{K}_{k,d}$).

If $Q\in\mathcal{Q}_{\mathrm{near}}$, then by the mean value theorem  $|P(\varphi(w))|\lesssim \rho$ for all $w\in Q$. 
It may happen that $G_{k,d}$ is small on such cubes. However, by \eqref{eqn:sublevel-deriv1}, the total volume of these cubes is
\[ |\bigcup \mathcal{Q}_{\mathrm{near}}| \lesssim \rho^a. \]

Suppose $Q\in\mathcal{Q}_{\mathrm{far}}$. Then \eqref{eqn:loj1} gives $|P(\varphi(w))|\gtrsim \rho^b$ for every $w\in Q$.
Using \eqref{eqn:sublevel-keyderivest} and $Q\subset \mathcal{K}_{k,d}$ this yields $|\partial_{w_1} G_{k,d}(w)|\gtrsim 2^{-3k} \rho^b$. 
This implies that for every $(w_2,w_3)\in [-1,1]^2$,
\[ |\{w_1\in\R\,:\,(w_1,w_2,w_3)\in Q,\,|G_{k,d}(w)|\le\varepsilon\}| \lesssim 2^{3k} \varepsilon\rho^{-b}, \]
with implicit constant independent of $k,d,w_2,w_3$.
Fubini's theorem implies
\[ |\{ w\in Q\,:\,|G_{k,d}(w)|\le\varepsilon \}|\lesssim 2^{3k} \varepsilon \rho^{2-b}. \]
Since there are $O(\rho^{-3})$ cubes, we obtain
\[ \bigcup_{Q\in\mathcal{Q}_\mathrm{far}} |\{ w\in Q\,:\, |F(w)|\le \varepsilon \}| \lesssim 2^{3k} \varepsilon \rho^{-1-b}. \]
Finally, $\mathcal{Q}_\mathrm{bdry}$ is harmless because it contains only $O(\rho^{-2})$ cubes, so 
\[ |\bigcup \mathcal{Q}_\mathrm{bdry}| \lesssim \rho. \]
Altogether we obtain
\[ |\{ w\in\mathcal{K}\,:\,|F(w)|\le \varepsilon \}| \lesssim \rho^a + 2^{3k} \varepsilon \rho^{-1-b} + \rho, \]
which is $\lesssim 2^{Ck} \varepsilon^c$ after choosing $\rho=\varepsilon^{c'}$ appropriately.
\end{proof}

\section{The smooth case}\label{sec:anisotp}
In this section we prove Theorem \ref{thm:anisotp}.

\subsection{Cone decomposition}
First we perform a cone decomposition of the symbol which is adapted to its anisotropic structure.   
Throughout this section we will denote
\begin{align*}
\g(x)=e^{-\pi x^2}\quad \textup{and} \quad \h(x)=\g'(x) = -2\pi x e^{-\pi x^2}.
\end{align*} 
We also write $\rho_s(x) = s^{-1}\rho(s^{-1}x)$
for a function $\rho$ on $\mathbb{R}$. 
Let ${\psi}$ be a smooth even function supported in $[-2,-1] \cup [1,2]$.
Note that  since ${\psi}$ vanishes near the origin,
$$\int_{0}^\infty  {\psi}(t^\alpha \xi) {\h}(t^\alpha\xi)^2   \frac{dt}{t} =C$$
is the same finite constant $C$ for any $\xi\neq 0$. 
Therefore, $m(\xi,\eta)$ is equal to a constant times
\begin{align}\label{mdecompose}
\int_0^\infty \int_0^\infty m(\xi,\eta){\psi}(t^\alpha \xi){\h}(t^\alpha\xi)^2 {\psi}(s^\beta \eta){\h}(s^\beta\eta)^2  \frac{ds}{s} \frac{dt}{t} .
\end{align}
Let us define $\varphi$ by
$${\varphi}(\xi)=\int_1^\infty {\psi}(s^\alpha \xi) {\h}(s^\alpha \xi)^2 \frac{ds}{s}.$$
Note that ${\varphi}$ is a smooth  function and supported in $[-2,2]$. Splitting the integration in \eqref{mdecompose} into the regions    $t\leq s$  and $s\leq t$ and integrating in the larger parameter
we obtain  that $m(\xi,\eta)$ equals a constant times
\begin{align}\label{mdecompose2}
\int_0^\infty  m(\xi,\eta) {\varphi}(t^\alpha\xi) {\psi}(t^\beta\eta) {\h}(t^\beta\eta)^2  \frac{dt}{t}  + \int_0^\infty  m(\xi,\eta) {\psi}(t^\alpha\xi) {\h}(t^\alpha\xi)^2 {\varphi}(t^\beta\eta)  \frac{dt}{t}  .
\end{align}

We discuss the first term only. The same discussion applies to the second, which is obtained from the first by interchanging the roles of the variables $\xi, \eta$.
Let us denote 
$$m^{(t)}(\xi,\eta) = m(t^{-\alpha}\xi,t^{-\beta}\eta) {\widetilde{\varphi}}(\xi)\g(\xi)^{-1} \widetilde{\psi}(\eta)$$
where  ${\widetilde{\varphi}}$ is a smooth function supported in $[-2.1,2.1]$ and constantly equal to $1$ on the support of $\varphi$, while $\widetilde{\psi}$ is a smooth function supported in $[-2.1,-0.9]\cup [0.9,2.1]$ and constantly equal to $1$ on the support of $\psi$.
Then the first term in \eqref{mdecompose2} can be written as 
\begin{align*}
\int_0^\infty  m^{(t)}(t^\alpha\xi,t^\beta\eta) {\varphi}(t^\alpha\xi)\g(t^\alpha \xi) {\psi}(t^\beta\eta) {\h}(t^\beta\eta)^2  \frac{dt}{t} .
\end{align*}
Expanding  $m^{(t)}$ into (rescaled) Fourier series we obtain for the last display
\begin{align*}
 \sum_{(u,v)\in \Z^2} \int_0^\infty  c_t(u,v) \varphi(t^\alpha\xi)\g(t^\alpha \xi)e^{c\pi i ut^\alpha \xi} \psi(t^\beta \eta){\h}(t^\beta\eta)^2 e^{c\pi i v t^\beta\eta}     \frac{dt}{t}, 
\end{align*}
where $c>0$ is a fixed constant and $c_t(u,v)$ are Fourier coeffients. 
Integrating  by parts and using  the symbol estimates \eqref{symest}  we obtain 
$$|c_t(u,v)|\lesssim_N (1+|u|)^{-N}(1+|v|)^{-N}$$
for $N>0$, uniformly in $t$.  

We will use the notation
\[ \varphi_{t,p} (x) = t^{-1} \varphi(t^{-1} (x-p)), \]
and note that $\varphi_t = \varphi_{t,0}$.
Normalizing, passing to the spatial side and using $\widecheck{\h}=i\,\h$, the preceding discussion leads to 
 model operators of the form
\begin{align*} %
T^{(u,v)}(f_1,f_2)(x,y) = \int_0^\infty  c(t) (f_1*_1(\g*\widecheck{\varphi}_{1,u})_{t^\alpha})(x,y) (f_2*_2(\h * \h * \widecheck{\psi}_{1,u})_{t^\beta})(x,y)  \frac{dt}{t}
\end{align*} 
with $|c(t)|\leq 1$. 
\begin{proposition}\label{prop:twisted}
Let $1<p,q< \infty$,  $\tfrac12< r< 2$ and $p^{-1}+q^{-1}=r^{-1}$.
Then for every $u,v\in\R$,
\begin{align}
\label{estimate-cone}
\|T^{(u,v)}(f_1,f_2)\|_{r} \lesssim_{p,q} C_{u,v}  \|f_1\|_p \|f_2\|_q,
\end{align}
where $C_{u,v}=(1+|u|+|v|)^{100}$.
\end{proposition}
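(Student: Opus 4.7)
The plan is to adapt Kov\'a\v{c}'s bounds for the twisted paraproduct \cite{Kov12}, with the refinements of \cite{Ber12,Dur14}, to the anisotropic setting ($\alpha\ne\beta$), and to track the polynomial dependence on the translation parameters $u,v$. The key structural feature is that the second factor $(\h*\h*\widecheck{\psi}_{1,v})_{t^\beta}$ is cancellative in the second variable (its Fourier transform vanishes to second order at the origin since $\widehat{\h}(0)=0$), while the first factor $(\g*\widecheck{\varphi}_{1,u})_{t^\alpha}$ is a smooth approximate identity in the first variable; together this is precisely the twisted-paraproduct structure of one cancellative and one non-cancellative factor acting in orthogonal directions.

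By duality the task reduces to bounding the trilinear form
\[
\Lambda^{(u,v)}(f_1,f_2,f_3) = \int_0^\infty c(t) \int_{\R^2} f_3\cdot (f_1 *_1 (\g*\widecheck{\varphi}_{1,u})_{t^\alpha})\cdot (f_2 *_2 (\h*\h*\widecheck{\psi}_{1,v})_{t^\beta})\, dx\,dy\,\tfrac{dt}{t}.
\]
First I would establish the corner estimate at $(p,q,r')=(4,4,2)$. By Plancherel, the cancellation in the second kernel produces the Littlewood--Paley bound
\(\int_0^\infty \|f_2 *_2 (\h*\h*\widecheck{\psi}_{1,v})_{t^\beta}\|_2^2\,\tfrac{dt}{t} \lesssim \|f_2\|_2^2\)
(uniformly in $v$, since the $v$-translation affects only the phase of the kernel's Fourier transform). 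The non-cancellative first factor is then handled by running the Bellman-function / telescoping scheme of \cite{Kov12}, organized on a grid of anisotropic dyadic rectangles of dimensions $\asymp t^\alpha \times t^\beta$; after the rescaling $(x,y)\mapsto (t^\alpha x, t^\beta y)$, the local analysis reduces essentially to the isotropic case treated in \cite{Kov12,Ber12,Dur14}. The polynomial factor $(1+|u|+|v|)^{100}$ arises because the translated Schwartz kernels $\widecheck{\varphi}_{1,u}$, $\widecheck{\psi}_{1,v}$ have $L^1$-norms and moment bounds growing at most polynomially in $|u|, |v|$, and each step of the telescoping argument accumulates at most a polynomial loss.

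To reach the full range $p,q\in(1,\infty)$, $r\in(1/2,2)$, I would apply multilinear interpolation. The $(4,4,2)$ bound is interpolated against auxiliary estimates obtained in the standard way: for $r\ge 1$, H\"older's inequality combined with the $L^p$-boundedness of the Hardy--Littlewood maximal operator applied in one coordinate yields $L^p \times L^\infty \to L^p$ and $L^\infty\times L^p\to L^p$ type bounds for the relevant one-scale pieces. For $1/2<r<1$, one invokes multilinear Calder\'on--Zygmund theory in the spirit of Grafakos--Torres: the rapid decay of the Schwartz kernels combined with stationary-phase-type estimates on the $t$-integration yield off-diagonal kernel bounds for $T^{(u,v)}$, from which a bilinear Calder\'on--Zygmund decomposition supplies the required weak-type endpoints.

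The principal obstacle is the corner estimate: executing Kov\'a\v{c}'s Bellman-function / telescoping argument on anisotropic rectangles and verifying that the polynomial dependence on $|u|+|v|$ is preserved at each level of the telescoping. A secondary technical matter is checking the off-diagonal kernel bounds for $T^{(u,v)}$ needed to extend below $r=1$, since the kernel is bi-parameter and of non-standard (bilinear twisted) type.
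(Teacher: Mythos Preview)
The core idea—running a telescoping/Bellman-function argument on anisotropic dyadic rectangles, tracking polynomial dependence on $u,v$—is correct and is essentially what the paper does; see the tree estimate (Proposition~\ref{prop:tree}) and the telescoping identity (Lemma~\ref{lemma:telescoping}). The paper organises this as a bound for a \emph{quadrilinear} form $\Lambda_{\mathcal{T}}^{(u,v)}(f_1,f_2,f_3,f_4)$ localized to a convex tree $\mathcal{T}$, then combines trees via a stopping-time argument driven by an anisotropic maximal function. This directly yields the whole open region $2<p,q<\infty$, $1<r<2$, not merely a single $(4,4,2)$ corner to interpolate from.

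However, your extension strategy has two genuine gaps. First, the $L^p\times L^\infty\to L^p$ bounds you invoke do not exist: the twisted paraproduct (isotropic or anisotropic) is unbounded when either input is taken in $L^\infty$, owing to its modulation symmetry $f_1\mapsto \varphi(y)f_1$, $f_2\mapsto \psi(x)f_2$ (the paper notes this failure in \S\ref{section:openproblems}). Single-scale $L^p\times L^\infty$ bounds via the Hardy--Littlewood maximal function do hold, but summing over scales requires exactly the cancellation already encoded in the telescoping scheme, so they do not furnish an independent endpoint for interpolation.

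Second, Grafakos--Torres bilinear Calder\'on--Zygmund theory does not apply: $T^{(u,v)}$ is not of the form $\int K((x,y)-z_1,(x,y)-z_2)f_1(z_1)f_2(z_2)\,dz$, but has the entangled structure in which $T^{(u,v)}(f_1,f_2)(x,y)$ depends on the fibers $f_1(\cdot,y)$ and $f_2(x,\cdot)$. There is no off-diagonal kernel in the standard bilinear sense, so the ``bi-parameter kernel bounds'' you allude to cannot be formulated. The correct tool is Bernicot's \emph{fiber-wise} Calder\'on--Zygmund decomposition \cite{Ber12}: perform CZ on the fiber $x\mapsto f_1(x,y)$ at each fixed $y$, use the mean-zero property of the bad part together with smoothness of the first kernel in $x$ to control the bad contribution, and thereby lower $p$ from $(2,\infty)$ to $(1,\infty)$ while keeping $q\in(2,\infty)$; then repeat in the second fiber to lower $q$. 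This is how the paper reaches the full range $1<p,q<\infty$, $\tfrac12<r<2$.
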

Theorem \ref{thm:anisotp} follows by applying this proposition to the operator corresponding to each fixed $(u,v)$ and finally summing in $(u,v)\in \Z^2$.
Note that the range in Theorem \ref{thm:anisotp} is a subset of the range in Proposition \ref{prop:twisted}.
This is because we split the symbol $m$   into two parts in \eqref{mdecompose2}.   Proposition \ref{prop:twisted} gives bounds for the operator corresponding to the first part. The estimates for the second part follow by symmetry with the roles of $f_1$ and $f_2$ interchanged, with appropriate ranges for $p$ and $q$. Taking the intersections of the ranges corresponding to the two terms in \eqref{mdecompose2} then yields  Theorem~\ref{thm:anisotp}. 
 
We will first prove Proposition \ref{prop:twisted} for the case $r>1$ and $2<p,q<\infty $. Then we will extend the range by a fiber-wise Calder\'on--Zygmund decomposition from \cite{Ber12}.
The proof will make use of a localization procedure involving trees of dyadic rectangles, analogous to \cite{Kov12}. This requires some notations and definitions.

For $k\in \Z$ we consider dyadic rectangles  of the form $Q=I\times J$ where $I$ is a dyadic interval of length $2^{\alpha k}$ and $J$ a dyadic interval of length $2^{\beta k}$, where $\alpha,\beta$ are positive integers. Let $\ell(Q)=2^k$. Each such rectangle partitions into $2^{\alpha+\beta}$ many children $Q_i$, each with $\ell(Q_i)=2^{k-1}$. For each $k$, the rectangles partition $\R^2$. 
For any collection of such rectangles $\mathcal{Q}$ we introduce
$$\Omega_\mathcal{Q} = \bigcup_{Q\in \mathcal{Q}} Q\times \big [\tfrac{\ell(Q)}{2},\ell(Q) \big ],$$
which is  a region in  $\R^{3}_+$. 
A finite collection of dyadic rectangles $\mathcal{T}$ is called a {\em tree} if there exists $Q_\mathcal{T}\in \mathcal{T}$, called the {\em root} of the tree, which  satisfies  $Q\subseteq Q_\mathcal{T}$ for all $Q\in \mathcal{T}$. A tree is called {\em convex}, if   for any dyadic rectangles $Q,Q',Q''$ we have that  $Q \subseteq Q' \subseteq Q''$ and $Q,Q''\in \mathcal{T}$ imply $Q'\in \mathcal{T}$. 
By $\mathcal{L}(\mathcal{T})$ we denote the set of {\em leaves} of a tree $\mathcal{T}$, i.e. those rectangles which are not contained in $\mathcal{T}$, but whose parent is. Note that the leaves $\mathcal{L}(\mathcal{T})$ partition the root $Q_\mathcal{T}$.
We fix the function
\[\theta(x)= (1+|x|)^{-10}\quad \text{for}\;x\in\R.\]
Given a collection $\mathcal{Q}$ of dyadic rectangles, let
\begin{equation}\label{def:mq}
 \mathcal{M}_{\mathcal{Q}}(f)=%
\sup_{Q\in\mathcal{Q}} \Big(\;|f|^{2} * (\theta_{\ell(Q)^\alpha}\otimes \theta_{\ell(Q)^\beta}) (c(Q)) \Big )^{1/2} ,
\end{equation}
where $c(Q)$ denotes the center of $Q$.
Observe that 
\[ \mathcal{M}_{\mathcal{Q}}(f) \asymp%
\sup_{\substack{(x,y,t)\in \Omega_{\mathcal{Q}}}}\Big(\;|f|^{2} * (\theta_{t^\alpha}\otimes \theta_{t^\beta}) (x,y) \Big )^{1/2},  \]
since if $(x,y,t)\in Q \times [\ell(Q)/2, \ell(Q)]$, then $(1+t^{-\alpha} | x-c(Q)_1|)^{10} (1+t^{-\beta}|y-c(Q)_2|)^{10} \lesssim 1$.
Finally, we define  
\begin{align*}
    \Lambda^{(u,v)}_{\mathcal{Q}}(f_1,f_2,f_3,f_4) = \int_{\Omega_{\mathcal{Q}}}  c(t) \int_{\R^4} & f_1(x',y)f_2(x,y')f_3(x,y) f_4(x',y')
 (\widecheck{\varphi}_{1,u})_{t^\alpha,\p}(x) \g_{t^\alpha,\p}(x')\\
&   (\h*\widecheck{\psi}_{1,v})_{t^\beta,\q}(y) \h_{t^\beta,\q}(y') 
 dx\,dy\,dx'\,dy'\, d\p\,d\q\, \frac{dt}{t} .
\end{align*}
Note that by specifying 
 $f_4=1$ we obtain a  (localized) version of a form dual to $T^{(u,v)}$. The key estimate is as follows.

\begin{proposition}[Tree estimate]
\label{prop:tree}
For   any convex tree $\mathcal{T}$ it holds
\begin{align*}
 | \Lambda^{(u,v)}_{\mathcal{T}}(f_1,f_2,f_3,f_4)   |  \leq C_{u,v}   |Q_\mathcal{T}| \prod_{j=1}^4 \mathcal{M}_\mathcal{T}(f_j),
\end{align*}
where $C_{u,v}=(1+|u|+|v|)^{100}$.
\end{proposition}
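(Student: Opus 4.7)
My plan is to adapt the tree estimate from Kovač's treatment of the twisted paraproduct \cite{Kov12} to the present anisotropic, shifted setting. As a first step I would perform the $\p$ and $\q$ integrations to collapse the two star-shaped factors into anisotropic convolution kernels $A^{(u)}_{t^\alpha}(x-x')$ and $B^{(v)}_{t^\beta}(y-y')$. These kernels inherit rapid polynomial decay from the Schwartz functions $\widecheck{\varphi}$ and $\widecheck{\psi}$; the translations $u,v$ shift the effective localization by $u\,t^\alpha$ and $v\,t^\beta$ and, after matching against $\mathcal{M}_\mathcal{T}$, contribute only a polynomial loss absorbed into $C_{u,v}=(1+|u|+|v|)^{100}$. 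The resulting form then exhibits the classical bipartite structure: the pair $(f_2,f_3)$ sharing the $x$-coordinate is coupled to $(f_1,f_4)$ at $x'$ through $A^{(u)}_{t^\alpha}$, while $(f_1,f_3)$ at $y$ is coupled to $(f_2,f_4)$ at $y'$ through $B^{(v)}_{t^\beta}$.

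Next I would apply the Cauchy--Schwarz inequality once in the $(x,x')$ variables and once in the $(y,y')$ variables to decouple the bipartite pairings, reducing $\Lambda^{(u,v)}_\mathcal{T}$ to the geometric mean of two auto-correlation forms, each involving squares of one pair of functions. The engine of the estimate is then the telescoping identity
\[
t\,\partial_t[\g_t(x)]^2 \;=\; -2\,t^{-1}[\g_t(x)]^2 + \pi^{-1}[\h_t(x)]^2,
\]
which allows me to rewrite the $\h_{t^\beta}^2$ contribution coming from the $y$-side as a $t$-derivative of a purely Gaussian kernel plus a harmless Gaussian remainder. Integration by parts in $t$ over each slab $Q\times[\ell(Q)/2,\ell(Q)]$ in $\Omega_\mathcal{T}$ telescopes across the tree, and convexity of $\mathcal{T}$ ensures that all internal boundary terms cancel; only contributions on the outer boundary of $\Omega_\mathcal{T}$ survive, namely a single term at the root $Q_\mathcal{T}$ and a sum indexed by the leaves $\mathcal{L}(\mathcal{T})$.

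Each surviving term is a single-scale Gaussian-smoothed $L^2$ pairing of the form $|f_j|^2\ast(\theta_{\ell(Q)^\alpha}\otimes\theta_{\ell(Q)^\beta})$ at the center of the relevant rectangle, and so is bounded pointwise by $\mathcal{M}_\mathcal{T}(f_j)^2\,|Q|$; summing via $\sum_{Q\in\mathcal{L}(\mathcal{T})}|Q|=|Q_\mathcal{T}|$ and combining the two Cauchy--Schwarz halves yields the stated bound. I expect the main obstacle to be twofold. First, the bumps $\widecheck{\varphi}_{1,u}$ and $\widecheck{\psi}_{1,v}$ are not themselves Gaussians, so to apply the telescoping identity cleanly I would need to expand them as weighted Gaussian superpositions at nearby scales and absorb the expansion coefficients into constants. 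Second, the anisotropic scaling $(\alpha,\beta)$ and the shifts $(u,v)$ must be tracked through the Cauchy--Schwarz and telescoping steps to confirm that the accumulated loss collapses to the polynomial factor $(1+|u|+|v|)^{100}$ rather than something super-polynomial; this bookkeeping, while not conceptually novel, is where most of the technical work of the proof will reside.
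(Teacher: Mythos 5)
Your overall strategy — Cauchy--Schwarz decoupling, expanding the Schwartz bumps as Gaussian superpositions, a telescoping identity, and a final counting over the tree — matches the paper's in broad outline, and the one-variable observation $t\,\partial_t[\g_t]^2 = -2[\g_t]^2 + \pi^{-1}[\h_t]^2$ is indeed the germ of the mechanism. But there are two substantive gaps.

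First, the opening move of ``performing the $\p,\q$ integrations to collapse the factors into convolution kernels $A^{(u)}_{t^\alpha}(x-x')$, $B^{(v)}_{t^\beta}(y-y')$'' does not work as stated: in $\Lambda^{(u,v)}_{\mathcal{T}}$ the variables $(\p,\q,t)$ range over $\Omega_{\mathcal{T}}=\bigcup_{Q\in\mathcal{T}} Q\times[\ell(Q)/2,\ell(Q)]$, so for each fixed $t$ the $(\p,\q)$-integral is over a union of dyadic rectangles, not all of $\R^2$. Collapsing $\p,\q$ into a translation-invariant kernel would erase exactly the tree localization that the proposition is about. The paper keeps the $\p,\q$ integration explicit, and this is not a cosmetic choice: it is needed for the integration by parts in $\p,\q$ described next.

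Second, and more importantly, your description of the telescoping step — ``integration by parts in $t$\,; convexity ensures all internal boundary terms cancel; only the root and the leaves survive'' — is only half of what happens, and the missing half is where the real technical work sits. Applying $-t\partial_t$ to the four-Gaussian product does \emph{not} directly produce the $\alpha\Theta^{(1)}+\beta\Theta^{(2)}$ structure (the combination $\h\h\cdot\g\g + \g\g\cdot\h\h$). It produces instead cross terms of the form $((\phi')^\sharp)_{t^\alpha,\p}(x)\,\phi_{t^\alpha,\p}(x')$ which must be symmetrized by integrating by parts in $\p$ (and analogously in $\q$). Because the $\p,\q$ integration is over a dyadic rectangle $Q$ and not over $\R$, this spatial integration by parts produces boundary contributions at $\partial Q$ that do \emph{not} telescope away. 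These are the ``bark'' terms $\mathcal{B}_{\mathcal{T},\lambda,\r}$ in Lemma~\ref{lemma:telescoping}, and they require a separate geometric estimate of the form
\[
\sum_{k\in\Z} 2^{k(\alpha+\beta)}\,\#\bigl\{\partial T_k\cap\bigl((2^{\alpha k}\Z)\times(2^{\beta k}\Z)\bigr)\bigr\}\ \lesssim\ |Q_{\mathcal{T}}|,
\]
which is proved by a packing argument using convexity of $\mathcal{T}$ in a way quite different from ``internal boundaries cancel.'' Your proposal as written would silently drop these spatial boundary terms, and the bound would not close without them. Finally, the resulting $\Theta^{(2)}$ term must be controlled by running a second Cauchy--Schwarz to reduce to the diagonal quantities $\Theta^{(1)}(f_j,f_j,f_j,f_j)$ and then exploiting positivity of $\Theta^{(2)}(f_j,f_j,f_j,f_j)$; this iterated Cauchy--Schwarz / positivity step is present in the paper and only lightly gestured at in your proposal.
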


To prove the tree estimate we will combine repeated applications of the Cauchy-Schwarz inequality  with a certain telescoping identity. To state this identity we need to introduce some more notation.
Given a convex tree $\mathcal{T}$ and parameters $\lambda\geq 1$, $\r\in\R$, let us define  the forms 
\begin{align*}
    \Theta_{\mathcal{T},\lambda,\r}^{(1)}(f_1,f_2,f_3,f_4) =\int_{\Omega_{\mathcal{T}}} \int_{\R^4} &f_1(x',y)f_2(x,y')f_3(x,y)f_4(x',y')\\
&    \h_{\lambda t^\alpha,\p}(x)\h_{\lambda t^\alpha,\p}(x')\g_{t^\beta,\q+\r t^\beta}(y)\g_{t^\beta,\q+\r t^\beta}(y')   dx\,dy\,dx'\,dy'\,d\p\,d\q\,\frac{dt}{t},\\
 \Theta_{\mathcal{T},\lambda,\r}^{(2)}(f_1,f_2,f_3,f_4) =\int_{\Omega_{\mathcal{T}}} \int_{\R^4} &f_1(x',y)f_2(x,y')f_3(x,y)f_4(x',y')\\
&    \g_{\lambda t^\alpha,\p}(x)\g_{\lambda t^\alpha,\p}(x')\h_{t^\beta,\q+\r t^\beta}(y)\h_{t^\beta,\q+\r t^\beta}(y')   dx\,dy\,dx'\,dy'\,d\p\,d\q\,\frac{dt}{t}.
\end{align*}
Moreover, for any collections of rectangles $\mathcal{Q}$ we define 
\begin{align*}
\Xi_{\mathcal{Q},\lambda,\r}(f_1,f_2,f_3,f_4) & = \pi  \sum_{Q\in \mathcal{Q}}\int_{Q} \int_{\R^4} f_1(x',y)f_2(x,y')f_3(x,y)f_4(x',y')\\
&\g_{\lambda \ell(Q)^\alpha,p}(x)\g_{\lambda \ell(Q)^\alpha,p}(x')\g_{\ell(Q)^\beta,q+r\ell(Q)^\beta}(y)\g_{\ell(Q)^\beta,q+r\ell(Q)^\beta}(y')   dx\,dy\,dx'\,dy'\,d\p\,dq.
\end{align*}

\begin{lemma}[Telescoping identity]
For every convex tree $\mathcal{T}$ and $\lambda>1$, $\r\in\R$ we have
\label{lemma:telescoping}
\begin{equation}\label{eqn:telescoping}
\alpha \Theta^{(1)}_{\mathcal{T},\lambda,\r}  + \beta \Theta^{(2)}_{\mathcal{T},\lambda,\r}  = \Xi_{\{Q_\mathcal{T}\},\lambda,\r} - \Xi_{\mathcal{L}(\mathcal{T}),\lambda,\r} + \mathcal{B}_{\mathcal{T},\lambda,\r},
\end{equation}
where $\mathcal{B}_{\mathcal{T},\lambda,\r}$ takes the form $\mathcal{B}_{\mathcal{T},\lambda,\r} = \sum_{Q\in\mathcal{T}} \mathcal{B}_{\{Q\},\lambda,\r}$ and satisfies \begin{equation}\label{eqn:barkest-final} |\mathcal{B}_{\mathcal{T},\lambda,\r}(f_1,f_2,f_3,f_4)| \le C_{\lambda,\r} |Q_\mathcal{T}| \prod_{j=1}^4 \mathcal{M}_\mathcal{T}(f_j)
\end{equation}
with a constant $C_{\lambda,\r}=O(\lambda^{11} (1+|\r|)^{11})$ not depending on $\mathcal{T}$.
\end{lemma}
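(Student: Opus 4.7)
The key is an algebraic identity expressing $\mathbbm{h}_{s,p}(x)\mathbbm{h}_{s,p}(x')$ in terms of derivatives of $A:=\mathbbm{g}_{s,p}(x)\mathbbm{g}_{s,p}(x')$. Combining $\partial_p\mathbbm{g}_{s,p}=-s^{-1}\mathbbm{h}_{s,p}$ with the heat-equation identity $\partial_p^2\mathbbm{g}_{s,p}=(2\pi/s)\partial_s\mathbbm{g}_{s,p}$, a direct expansion of $\partial_p^2 A$ yields
\[\mathbbm{h}_{s,p}(x)\mathbbm{h}_{s,p}(x')=\tfrac{s^2}{2}\,\partial_p^2 A\;-\;\pi s\,\partial_s A.\]
The analogous identity for $B:=\mathbbm{g}_{\tau,q+\r\tau}(y)\mathbbm{g}_{\tau,q+\r\tau}(y')$ reads $\mathbbm{h}_{\tau,q+\r\tau}(y)\mathbbm{h}_{\tau,q+\r\tau}(y')=\tfrac{\tau^2}{2}\partial_q^2 B-\pi\tau\,\partial_\tau B+\pi\tau\r\,\partial_q B$, where the last term arises from the chain rule $\partial_\tau|_{q+\r\tau}=\partial_\tau|_q-\r\partial_q$ used to convert from the shifted variable $\tilde q:=q+\r\tau$.

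Plugging these into the integrand of $\alpha\Theta^{(1)}_{\mathcal{T},\lambda,\r}+\beta\Theta^{(2)}_{\mathcal{T},\lambda,\r}$ and using $s\partial_s=(t/\alpha)\partial_t$ and $\tau\partial_\tau=(t/\beta)\partial_t$, the combined integrand times $dt/t$ decomposes as
\[-\pi\,\partial_t(AB)\,dt\;+\;\tfrac{\alpha s^2}{2t}(\partial_p^2 A)\,B\,dt\;+\;\tfrac{\beta\tau^2}{2t}A\,(\partial_q^2 B)\,dt\;+\;\tfrac{\beta\pi\tau\r}{t}A\,(\partial_q B)\,dt.\]
I then partition $\Omega_\mathcal{T}=\bigsqcup_{Q\in\mathcal{T}}Q\times[\ell(Q)/2,\ell(Q)]$ and apply the fundamental theorem of calculus in $t$ to the first summand on each piece. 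After integrating over $Q$ and against $f_1f_2f_3f_4$, the boundary values at $t=\ell(Q)$ produce the integrand of $\Xi_{\{Q\},\lambda,\r}$, while those at $t=\ell(Q)/2$ produce $\sum_{Q_c}\Xi_{\{Q_c\},\lambda,\r}$ with $Q_c$ ranging over the $2^{\alpha+\beta}$ children of $Q$. Summing over $Q\in\mathcal{T}$, convexity causes internal contributions to telescope pairwise (each rectangle in $\mathcal{T}\setminus\{Q_\mathcal{T}\}$ is counted once as its own top boundary and once as a child of its parent, with opposite signs), leaving precisely $\Xi_{\{Q_\mathcal{T}\},\lambda,\r}-\Xi_{\mathcal{L}(\mathcal{T}),\lambda,\r}$.

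The remaining three summands define $\mathcal{B}_{\mathcal{T},\lambda,\r}=\sum_{Q\in\mathcal{T}}\mathcal{B}_{\{Q\},\lambda,\r}$. To bound each $\mathcal{B}_{\{Q\},\lambda,\r}$, I integrate by parts in $p$ (for the $\partial_p^2 A$-term) and in $q$ (for $\partial_q^2 B$ and $\partial_q B$), converting these into boundary integrals on $\partial Q$. Combined with the Gaussian decay of $A$, $B$ and their derivatives, the estimate $\int_{\R^2}|f_j|(\mathbbm{g}_{s,p}\otimes\mathbbm{g}_{\tau,q+\r\tau})\lesssim s\tau\,\mathcal{M}_\mathcal{T}(f_j)$ valid at $t\asymp\ell(Q)$, and the prefactors $s^2/t\asymp\lambda^2\ell(Q)^{2\alpha-1}$, $\tau^2/t\asymp\ell(Q)^{2\beta-1}$, $\tau\r/t\asymp|\r|\ell(Q)^{\beta-1}$ (each contributing at worst a polynomial factor in $\lambda$ and $|\r|$ after the $t$-integration), each boundary integral satisfies $|\mathcal{B}_{\{Q\},\lambda,\r}|\lesssim C_{\lambda,\r}\cdot(\text{perimeter contribution of }\partial Q)\cdot\prod_j\mathcal{M}_\mathcal{T}(f_j)$ with $C_{\lambda,\r}=O(\lambda^{11}(1+|\r|)^{11})$.

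The principal obstacle is summing these to obtain $|\mathcal{B}_{\mathcal{T},\lambda,\r}|\leq C_{\lambda,\r}|Q_\mathcal{T}|\prod_j\mathcal{M}_\mathcal{T}(f_j)$: a naive triangle inequality gives $\sum_Q|\partial Q|$, which can far exceed $|Q_\mathcal{T}|$ for deep trees. The resolution is that the IBP boundary contributions on edges shared between adjacent rectangles of $\mathcal{T}$ at the same scale cancel exactly (equal magnitude, opposite orientations), so after cancellation the remaining contributions are supported only on the exterior boundary of $\bigcup_{Q\in\mathcal{T}}Q\subset Q_\mathcal{T}$, contained in $\partial Q_\mathcal{T}\cup\bigcup_{L\in\mathcal{L}(\mathcal{T})}\partial L$; a careful accounting of the perimeter of this set across scales (together with Gaussian tails to handle boundaries between rectangles of differing scale) yields the desired area bound $|Q_\mathcal{T}|$ and completes the estimate.
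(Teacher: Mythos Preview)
Your approach is essentially the paper's, reorganized: your heat-equation identity $\mathbbm{h}_{s,p}(x)\mathbbm{h}_{s,p}(x')=\tfrac{s^2}{2}\partial_p^2 A-\pi s\,\partial_s A$ is precisely what the paper obtains by combining its scaling identity $-t\partial_t\phi_{t^\alpha}=\alpha\phi_{t^\alpha}+\alpha((\phi')^\sharp)_{t^\alpha}$ with integration by parts in $p$ and the relations \eqref{eqn:gh-idts}. The FTC-in-$t$ telescoping via convexity, the integration by parts in $p,q$ producing boundary (``bark'') terms, and the cancellation of shared edges at a fixed scale are all exactly the same steps as in \S\ref{sec:telescoping}.

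The one place your outline is genuinely incomplete is the final counting. Your set-theoretic containment $\partial T_k\subset\partial Q_{\mathcal T}\cup\bigcup_{L\in\mathcal L(\mathcal T)}\partial L$ is correct (each lattice point of $\partial T_k$ interior to $Q_{\mathcal T}$ lies on $\partial L$ for some leaf $L$ with $\ell(L)\ge 2^k$, by convexity), and this does lead to a clean proof: one then has
\[
\sum_{k}2^{k(\alpha+\beta)}\#\big(\partial T_k\cap(2^{k\alpha}\mathbb{Z}\times 2^{k\beta}\mathbb{Z})\big)
\lesssim\sum_{L\in\mathcal L(\mathcal T)}\sum_{k\le k_L}2^{k(\alpha+\beta)}\big(2^{(k_L-k)\alpha}+2^{(k_L-k)\beta}\big)
\lesssim\sum_L|L|=|Q_{\mathcal T}|,
\]
the inner sum being geometric. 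But you do not carry this out; you only say ``a careful accounting of the perimeter \ldots\ yields $|Q_{\mathcal T}|$''. The paper instead associates to each boundary lattice point $(p,q,k)$ a small dyadic rectangle $Q(p,q,k)$ of area $\asymp 2^{k(\alpha+\beta)}$ and proves these are pairwise disjoint across all scales using convexity, which is an equivalent but differently packaged argument. Your remark about ``Gaussian tails to handle boundaries between rectangles of differing scale'' is off-target: there is no cancellation or decay linking boundary terms across scales; the control is purely combinatorial via the counting above (Gaussian decay is only used, as in \eqref{domfct}, to pass from $\mathbbm{g},\mathbbm{h}$ to $\theta$ and invoke Lemma~\ref{lem:bdbymax}).
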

The first two terms on the right-hand side also enjoy an estimate
\begin{align*}
|\Xi_{\{Q_\mathcal{T}\},\lambda,\r}(f_1,f_2,f_3,f_4)|+ |\Xi_{\mathcal{L}(\mathcal{T}),\lambda,\r} (f_1,f_2,f_3,f_4)| \le  C_{\lambda,\r} |Q_{\mathcal{T}}| \prod_{j=1}^4 \mathcal{M}_{\mathcal{T}}(f_j)
\end{align*}
Indeed, this follows from 
the estimates
\begin{equation}
    \label{domfct}
    \g_{\lambda} \lesssim \lambda^{10} \theta,\; \g_{1,\r} \lesssim (1+|\r|)^{10} \theta,\;|\h_{\lambda}| \lesssim \lambda^{10} \theta,\; |\h_{1,\r}| \lesssim (1+|\r|)^{10} \theta,
\end{equation}
the identity
$$\varphi_{t^\beta,\q+\r t^\beta} = (\varphi_{1,\r})_{t^\beta,\q},$$
and the following simple result.
\begin{lemma}  
\label{lem:bdbymax}
For any $(p,q,t)\in \R^3_+$ and non-negative Schwartz functions $f_1,f_2,f_3,f_4$ on $\R^2$,
\begin{align*}
  \int_{\R^{4}} & f_1(x',y)f_2(x,y')f_3(x,y)f_4(x',y')\\ &\theta_{t^\alpha,\p}(x)\theta_{t^\alpha,\p}(x') \theta_{t^\beta,\q}(y)\theta_{t^\beta,\q}(y')    dx\, dy\,dx'\,dy'   \leq \prod_{j=1}^4 \Big (f_j^{2} *  (\theta_{t^\alpha} \otimes \theta_{t^\beta})(\p,\q)\Big)^{1/2}.
\end{align*}
\end{lemma}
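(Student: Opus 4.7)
The plan is to prove the estimate by iterated Cauchy--Schwarz, exploiting that the weight factors into a tensor product whose shape matches the entangled arrangement of the four functions. The starting observation is that the weight can be split symmetrically as
\[
\theta_{t^\alpha,\p}(x)\theta_{t^\alpha,\p}(x')\theta_{t^\beta,\q}(y)\theta_{t^\beta,\q}(y')
\,=\, w_1(x',y)\,w_2(x,y')\,w_3(x,y)\,w_4(x',y'),
\]
where each $w_j$ is the square root of the product of the two weight factors whose variables coincide with those of $f_j$; e.g.\ $w_3(x,y)^2 = \theta_{t^\alpha,\p}(x)\theta_{t^\beta,\q}(y)$. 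This works because each of the four atomic weights $\theta_{t^\alpha,\p}(x)$, $\theta_{t^\alpha,\p}(x')$, $\theta_{t^\beta,\q}(y)$, $\theta_{t^\beta,\q}(y')$ is attached to exactly two of the four functions $f_1,f_2,f_3,f_4$ in the ``$\{x,x'\}\times\{y,y'\}$'' grid pattern.

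Setting $g_j = f_j\cdot w_j$, the task reduces to the standard box Cauchy--Schwarz inequality
\[
\int_{\R^4} g_1(x',y)\,g_2(x,y')\,g_3(x,y)\,g_4(x',y')\,dx\,dx'\,dy\,dy' \,\le\, \prod_{j=1}^4 \|g_j\|_{L^2(\R^2)}.
\]
I would prove this in two rounds. First, fixing $y,y'$, apply Cauchy--Schwarz separately to the $x$-integral of $g_2(x,y')g_3(x,y)$ and the $x'$-integral of $g_1(x',y)g_4(x',y')$, producing factors
\[
G_1(y) = \Bigl(\textstyle\int g_1(x',y)^2\,dx'\Bigr)^{1/2}, \quad
G_2(y') = \Bigl(\textstyle\int g_2(x,y')^2\,dx\Bigr)^{1/2},
\]
and similarly $G_3(y), G_4(y')$. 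The remaining integral becomes $(\int G_1 G_3\,dy)(\int G_2 G_4\,dy')$, and a second round of Cauchy--Schwarz in $y$ and in $y'$ yields $\prod_j \|G_j\|_{L^2} = \prod_j \|g_j\|_{L^2}$.

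Finally, since $\theta$ is even, one identifies
\[
\|g_j\|_{L^2(\R^2)}^2 \,=\, \int f_j^2 \cdot w_j^2 \,=\, \bigl(f_j^2 * (\theta_{t^\alpha}\otimes\theta_{t^\beta})\bigr)(\p,\q),
\]
which completes the proof. There is no real obstacle here: the only point requiring care is the combinatorial bookkeeping in step one to verify that the weights $w_j$ do multiply back to the full weight, which is immediate from inspection of the four-variable grid structure.
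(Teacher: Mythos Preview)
Your proof is correct and follows essentially the same approach as the paper: two rounds of Cauchy--Schwarz, first in the $x,x'$ variables and then in the $y,y'$ variables. The only difference is cosmetic---you absorb the square roots of the weights into new functions $g_j$ and invoke the box Cauchy--Schwarz inequality, whereas the paper keeps the weights $\theta_{t^\alpha,\p}$, $\theta_{t^\beta,\q}$ explicit as measures in each application of Cauchy--Schwarz; these are the same computation.
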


\begin{proof}
Interchanging the order of integration we write the left-hand side as
 \[
 \int_{\R^2} \Big( \int_{\R}  f_2(x,y')f_3(x,y)\theta_{t^\alpha,\p}(x) dx \Big) \Big( \int_{\R} f_1(x',y)f_4(x',y') \theta_{t^\alpha,\p}(x')dx' \Big)  \theta_{t^\beta,\q}(y)\theta_{t^\beta,\q}(y')   dy\,dy'   .
 \]
By the Cauchy-Schwarz inequality in $x,x'$ we obtain\chk
\begin{align*}
     \int_{\R^2} & 
      \Big( \int_{\R}  f_1(x,y)^2\theta_{t^\alpha,\p}(x) dx \Big)^{1/2}
     \Big( \int_{\R}  f_2(x,y')^2\theta_{t^\alpha,\p}(x) dx \Big)^{1/2}
    \\
&     \Big( \int_{\R}  f_3(x,y)^2\theta_{t^\alpha,\p}(x) dx \Big)^{1/2}
     \Big( \int_{\R}  f_4(x,y')^2\theta_{t^\alpha,\p}(x) dx \Big)^{1/2}
     \theta_{t^\beta,\q}(y)\theta_{t^\beta,\q}(y')   dy\,dy'  . 
\end{align*}
Another application of the Cauchy-Schwarz inquality, this time in $y,y'$ 
yields the desired bound.
\end{proof}

The proof of   Lemma \ref{lemma:telescoping} is contained in \S \ref{sec:telescoping}. In \S \ref{sec:tree} we use the telescoping identity to prove Proposition \ref{prop:tree}.
In \S \ref{sec:planting} we use the tree estimate to prove \eqref{estimate-cone} in the case $r>1, p,q>2$. Finally, in \S \ref{sec:fwcz} we obtain the range claimed in Proposition \ref{prop:twisted} using a fiber-wise Calder\'on--Zygmund decomposition.

\subsection{Proof of the tree estimate}\label{sec:tree}
Here we prove Proposition \ref{prop:tree} using Lemma \ref{lemma:telescoping}.
By splitting into real and imaginary parts and using multilinearity we may assume that the functions are real-valued.
We rewrite $\Lambda_\mathcal{T}^{(u,v)}(f_1,f_2,f_3,f_4)$ as 
\begin{align*}
\int_{\Omega_\mathcal{T}} c(t) \int_{\mathbb{R}^3} \Big ( \int_{\mathbb{R}}f_1(x',y) f_3(x,y)  \h_{t^\beta,\q}(y) & dy \Big )   \Big( \int_{\mathbb{R}}  f_2(x,y') f_4(x',y')
\h_{t^\beta,\q+\r t^\beta}(y')
 dy' \Big )\\ & \times \g_{t^\alpha,\p}(x)  (\widecheck{\varphi}_{1,u})_{t^\alpha,\p}(x')  \widecheck{\psi}_{1,v}(\r) dx\,dx'\,d\r\,d\p\,d\q\, \frac{dt}{t}.
\end{align*}
Taking the triangle inequality, invoking the bound $|c(t)|\le 1$ and applying the Cauchy--Schwarz inequality in $x,x',r,p,t$ we obtain the geometric mean of 
 \begin{align}\label{aftercs1}
\int_{\Omega_\mathcal{T}} \int_{\mathbb{R}^3} \Big ( \int_{\mathbb{R}}f_1(x',y) f_3(x,y)  \h_{t^\beta,\q}(y)  dy \Big )^2  
\g_{t^\alpha,\p}(x) |\widecheck{\varphi}_{1,u}|_{t^\alpha,\p}(x') |\widecheck{\psi}_{1,v}|(\r) dx\,dx'\,d\r\,d\p\,d\q\, \frac{dt}{t}
\end{align}
and
\begin{align}\label{aftercs2}
\int_{\Omega_\mathcal{T}}  \int_{\mathbb{R}^3}  \Big( \int_{\mathbb{R}} f_2(x,y')f_4(x',y') \h_{t^\beta,\q+\r t^\beta}(y') dy' \Big )^2 
\g_{t^\alpha,\p}(x) |\widecheck{\varphi}_{1,u}|_{t^\alpha,\p}(x') |\widecheck{\psi}_{1,v}|(\r) dx\,dx'\,d\r\,d\p\,d\q\, \frac{dt}{t}.
\end{align}
Let us first consider \eqref{aftercs1}.  Using the rapid decay of the bump functions we dominate 
\begin{equation}\label{eqn:dominatebygaussians}
| \g(x)\widecheck{\varphi}_{1,u}(x')| \lesssim_N (1+|u|)^{N} \int_1^\infty\g_{\lambda}(x) \g_{\lambda}(x') \lambda^{-N+1} d\lambda.
\end{equation} 
This is because $|\g(x)\widecheck{\varphi}_{1,u}(x')|\lesssim_N (1+|u|)^N (1+|(x,x')|)^{-N}$ and
\[ \int_1^\infty e^{-\lambda^{-2} (x^2+(x')^2)} \lambda^{-N-1} d\lambda= |(x,x')|^{-N} \int_{0}^{|(x,x')|} e^{-\omega^2} \omega^{N-1} d\omega. \]
We will use \eqref{eqn:dominatebygaussians} with, say $N=50$, to estimate \eqref{aftercs1}.
Expanding the square in \eqref{aftercs1} and integrating in $\r$  leads us to consider for each fixed $\lambda$,
\begin{align*}%
 \int_{\Omega_\mathcal{T}} \int_{\mathbb{R}^4}   f_1(x',y) f_3(x,y) f_1(x',y')  f_3(x,y') \g_{\lambda t^\alpha,\p}(x)  \g_{\lambda t^\alpha,\p}(x')  \h_{t^\beta,\q} (y) {\h}_{t^\beta,\q} (y')  dx\,dx'\,dy\,dy'\,d\p\,d\q\, \frac{dt}{t},
\end{align*}
which can be recognized as $\Theta_{\mathcal{T},\lambda,1}^{(2)}(f_1,f_3,f_3,f_1)$.
The identity from  Lemma \ref{lemma:telescoping} yields
\begin{align*}
   \alpha\Theta_{\mathcal{T},\lambda,1}^{(1)}(f_1,f_3,f_3,f_1) + \beta\Theta_{\mathcal{T},\lambda,1}^{(2)}(f_1,f_3,f_3,f_1)  = R_{\mathcal{T},\lambda}(f_1,f_3,f_3,f_1),
\end{align*}
where $R_{\mathcal{T},\lambda} = \Xi_{\{Q_\mathcal{T}\},\lambda,1} - \Xi_{\mathcal{L}(\mathcal{T}),\lambda,1} + \mathcal{B}_{\mathcal{T},\lambda,1}$ and satisfies the bound
\[|R_{\mathcal{T},\lambda}(f_1,f_3,f_3,f_1)|\lesssim \lambda^{11} |Q_\mathcal{T}| \mathcal{M}_\mathcal{T}(f_1)^2 \mathcal{M}_\mathcal{T}(f_3)^2.\]
We proceed with bounding $\Theta^{(1)}_{\mathcal{T},\lambda,1}(f_1,f_3,f_3,f_1)$, which is
 \begin{align*}%
 \int_{\Omega_\mathcal{T}} \int_{\mathbb{R}^4}   f_1(x',y) f_3(x,y) f_1(x',y')  f_3(x,y') \h_{\lambda t^\alpha,\p}(x)  \h_{\lambda t^\alpha,\p}(x')  \g_{t^\beta,\q} (y) {\g}_{t^\beta,\q} (y')  dx\,dx'\,dy\,dy'\,d\p\,d\q\, \frac{dt}{t}.
\end{align*}
Applying the Cauchy--Schwarz inequality again, we obtain the geometric mean of 
\begin{align*}
\int_{\Omega_{\mathcal{T}}} \int_{\mathbb{R}^2} \Big ( \int_{\mathbb{R}}f_1(x',y) f_1(x',y')  \h_{ \lambda t^\alpha,\p}(x')  dx' \Big )^2  \g_{t^\beta,\q}(y)\g_{t^\beta,\q}(y')  dy\,dy'\,d\p\,d\q\,\frac{dt}{t}
\end{align*}
and
\begin{align*}
\int_{\Omega_{\mathcal{T}}} \int_{\mathbb{R}^2} \Big ( \int_{\mathbb{R}}f_3(x,y) f_3(x,y')  \h_{\lambda t^\alpha,\p}(x)  dx \Big )^2  \g_{t^\beta,\q}(y)\g_{t^\beta,\q}(y')  dy\,dy'\,d\p\,d\q\,\frac{dt}{t}.
\end{align*}
Expanding the square, the first term can be recognized as $\Theta^{(1)}_{\mathcal{T},\lambda,1}(f_1,f_1,f_1,f_1)$, while the second term is $\Theta^{(1)}_{\mathcal{T},\lambda,1}(f_3,f_3,f_3,f_3)$. Applying Lemma \ref{lemma:telescoping} again we have 
\[\alpha\Theta^{(1)}_{\mathcal{T},\lambda,1}(f_j,f_j,f_j,f_j) + \beta\Theta^{(2)}_{\mathcal{T},\lambda,1}(f_j,f_j,f_j,f_j) = R_{\mathcal{T},\lambda}(f_j,f_j,f_j,f_j), \]
where \[|R_{\mathcal{T},\lambda}(f_j,f_j,f_j,f_j)| \lesssim \lambda^{11} |Q_\mathcal{T}| \mathcal{M}_\mathcal{T}(f_j)^4. \]
Since $\Theta^{(2)}_{\mathcal{T},\lambda,1}(f_j,f_j,f_j,f_j)\ge 0$ we have now proved that 
\[ |\Theta^{(1)}_{\mathcal{T},\lambda,1}(f_1,f_3,f_3,f_1)| \lesssim \lambda^{11} |Q_\mathcal{T}| \mathcal{M}_\mathcal{T}(f_1)^2 \mathcal{M}_\mathcal{T}(f_3)^2, \]
which completes our estimate of \eqref{aftercs1}.
It remains to treat \eqref{aftercs2}. 
Using \eqref{eqn:dominatebygaussians} and rapid decay of $\widecheck{\psi}_{1,v}$ we estimate \eqref{aftercs2} by a uniform constant times
\begin{align*}
(1+|u|)^{50} (1+|v|)^{50}  \int_{\R} \int_1^\infty  \Theta^{(2)}_{\mathcal{T},\lambda,\r}(f_4,f_2,f_2,f_4)  \lambda^{-49}d\lambda  (1+|\r|)^{-50}  d\r.
\end{align*}
Performing the analogous steps as for \eqref{aftercs1} gives
\[\Theta^{(2)}_{\mathcal{T},\lambda,\r}(f_4,f_2,f_2,f_4) \lesssim (\lambda(1+|\r|))^{11} |Q_\mathcal{T}| \mathcal{M}_\mathcal{T}(f_2)^2 \mathcal{M}_\mathcal{T}(f_4)^2. \]
In the end it remains to integrate the bounds in $\lambda$ and  $r$.
This  finishes the proof of the tree estimate.
 
\subsection{Proof of telescoping identity}\label{sec:telescoping}
Here we prove Lemma \ref{lemma:telescoping}.
Let us begin with the case that the tree $\mathcal{T}$ consists of a single dyadic rectangle $Q=I\times J$.
Denote $\ell(Q)$ by $2^k$. Then \[\Omega_\mathcal{T} = [2^{k-1}, 2^k] \times I\times J. \]
Fix $(x,x',y,y')\in \R^4$. 
For an interval $I=[a,b]$ and a function $f$ on $I$ we write
\[ [f]_I = [f(t)]_{t\in I} = f(b) - f(a). \]
By the fundamental theorem of calculus in $t$ we obtain \chk
\begin{align}\nonumber
& \int_{2^{k-1}}^{2^k} -t\partial_t(\g_{\lambda t^\alpha,\p}(x)\g_{\lambda t^\alpha,\p}(x')\g_{t^\beta,\q+\r t^\beta}(y)\g_{t^\beta,\q+\r t^\beta}(y') ) \frac{dt}{t}\\ \label{ftc1}
& = [ -\g_{\lambda t^\alpha,\p}(x)\g_{\lambda t^\alpha,\p}(x')\g_{t^\beta,\q+\r t^\beta}(y)\g_{t^\beta,\q+\r t^\beta}(y') ]_{t\in [2^{k-1},2^k]}.
\end{align}
We will use the product rule on the left-hand side of the identity, yielding a sum of four terms. 
To analyze each term we first make some preliminary computations.

For a function $\phi$ on $\R$ we denote ${\phi}^\sharp (u)=u\phi(u)$. Then we observe the identity \chk
\begin{align*}%
-t\partial_t \phi_{t^\alpha}(u) = \alpha 	\phi_{t^\alpha}(u) + \alpha ((\phi')^\sharp)_{t^\alpha}(u) .
\end{align*}
Integrating by parts we deduce for every bounded interval $I$ and $t\in (0,\infty)$, \chk
\begin{align*}\nonumber
\int_{I}  ((\phi')^\sharp)_{t^\alpha,\p}(x)  \phi_{t^\alpha,\p}(x') \,  d\p  = & 
-t^\alpha \big[ (\phi^\sharp)_{t^\alpha,\p}(x) \,  \phi_{t^\alpha,\p}(x')\big ]_{\p\in I}\\ \nonumber
& - \int_{I}  \phi_{t^\alpha,\p}(x) \phi_{t^\alpha,\p}(x') d\p -  \int_{I}   (\phi^\sharp)_{t^\alpha,\p}(x)  (\phi')_{t^\alpha,\p}(x') d\p.
\end{align*}
Combining the last two identities we obtain \chk
\begin{align}\label{eqn:telescoping-pre}
 \int_{I} \big(  -t\partial_t \phi_{t^\alpha,\p}(x)\big ) \phi_{t^\alpha,\p}(x')d\p   = &
 -\alpha t^\alpha \big[ (\phi^\sharp)_{t^\alpha,\p}(x) \,  \phi_{t^\alpha,\p}(x')   \big ]_{\p\in I}  \\\nonumber
& -  \alpha \int_{I}   (\phi^\sharp)_{t^\alpha,\p}(x)  (\phi')_{t^\alpha,\p}(x') d\p.\label{piinp}
\end{align}
Note that \chk 
\begin{equation}\label{eqn:gh-idts}
((\g_{1,s})_{\mu})' = \mu^{-1}(\h_{1,s})_{\mu}\quad\text{and}\quad((\g_{1,s})_{\mu})^\sharp(x)=  -\mu (2\pi)^{-1}(\h_{1,s})_\mu + \mu s (\g_{1,s})_\mu.
\end{equation} We will use these relations in the cases $(\mu,s)=(\lambda,0)$ and $(\mu,s)=(1,r)$.
From \eqref{eqn:telescoping-pre}, \eqref{eqn:gh-idts}, and the product rule,
\begin{align}\label{eqn:telescoping-pre1}
\int_Q \eqref{ftc1}\,d\p\,dq  &= \alpha \pi^{-1} \int_{\Omega_\mathcal{T}} \h_{\lambda t^\alpha,\p}(x) \h_{\lambda t^\alpha,\p}(x') \g_{t^\beta,\q+\r t^\beta}(y) \g_{t^\beta,\q+\r t^\beta}(y') d\p\,d\q\,\frac{dt}t   \\\nonumber
& + \beta \pi^{-1} \int_{\Omega_\mathcal{T}} \g_{\lambda t^\alpha,\p}(x) \g_{\lambda t^\alpha,\p}(x') \h_{t^\beta,\q+\r t^\beta}(y) \h_{t^\beta,\q+\r t^\beta}(y') d\p\, d\q\, \frac{dt}t + \mathscr{B} ,
\end{align}
where the remainder term $\mathscr{B}$ consists of terms corresponding to the ``bark'' of the tree, as well as two additional terms that vanish when $r=0$: it is of the form $\mathscr{B} = \mathscr{B}_{1} + \mathscr{B}_2$, where
\begin{align}\label{eqn:barkterm}
\mathscr{B}_1 &= \alpha \lambda (2\pi)^{-1} \int_{2^{k-1}}^{2^k} \int_{J} [ \h_{\lambda t^\alpha,\p}(x) \g_{\lambda t^\alpha,\p}(x') ]_{\p\in I}(\g_{1,\r})_{t^\beta,\q}(y) (\g_{1,\r})_{t^\beta,\q}(y') d\q\,t^{\alpha-1} dt\\\nonumber
& +  \alpha \lambda (2\pi)^{-1} \int_{2^{k-1}}^{2^k} \int_{J} [ \g_{\lambda t^\alpha,\p}(x) \h_{\lambda t^\alpha,\p}(x') ]_{\p\in I}(\g_{1,\r})_{t^\beta,\q}(y) (\g_{1,\r})_{t^\beta,\q}(y') d\q\,t^{\alpha-1} dt\\\nonumber
& +    \beta \lambda (2\pi)^{-1} \int_{2^{k-1}}^{2^k} \int_{I}  \g_{\lambda t^\alpha,\p}(x) \g_{\lambda t^\alpha,\p}(x') [(\h_{1,\r})_{t^\beta,\q}(y) (\g_{1,\r})_{t^\beta,\q}(y')]_{\q\in J} d\p\,t^{\beta-1} dt\\\nonumber
& +  \beta \lambda (2\pi)^{-1} \int_{2^{k-1}}^{2^k} \int_{I}  \g_{\lambda t^\alpha,\p}(x) \g_{\lambda t^\alpha,\p}(x') [(\g_{1,\r})_{t^\beta,\q}(y) (\h_{1,\r})_{t^\beta,\q}(y')]_{\q\in J} d\p\,t^{\beta-1} dt,
\end{align} 
\begin{align}\nonumber
    \mathscr{B}_2 &= - 2 \beta r \int_{2^{k-1}}^{2^k} \int_{I} \g_{\lambda t^\alpha,\p}(x) \g_{\lambda t^\alpha,\p}(x') [ (\g_{1,\r})_{t^\beta,\q}(y) (\g_{1,\r})_{t^\beta,\q}(y') ]_{\q\in J} d\p\, t^{\beta-1}\, dt\\\nonumber%
& - \beta r \int_{\Omega_\mathcal{T}} \g_{\lambda t^\alpha,\p}(x) \g_{\lambda t^\alpha,\p}(x') \Big( (\g_{1,\r})_{t^\beta,\q}(y) (\h_{1,\r})_{t^\beta,\q}(y') +
(\h_{1,\r})_{t^\beta,\q}(y)
(\g_{1,\r})_{t^\beta,\q}(y') \Big)\,d\p\,d\q\,\frac{dt}t.
\end{align}
By another integration by parts in $q$ we see that %
\[
\mathscr{B}_2= -\beta r \int_{2^{k-1}}^{2^k} \int_{I}    \g_{\lambda t^\alpha,\p}(x)   \g_{\lambda t^\alpha,p }(x') [(\g_{1,\r})_{t^\beta,\q}(y)  (\g_{1,\r})_{t^\beta,\q}(y')]_{\q\in J}
   d\p\,t^{\beta-1}\,dt.
\]

So far we have proved that in the case $\mathcal{T}=\{Q\}$, the identity \eqref{eqn:telescoping} holds with $\mathcal{B}_{\{Q\},\lambda,\r}$ taking the form
\[ \int_{\R^4} f_1(x',y) f_2(x,y') f_3(x,y) f_4(x',y') \mathscr{B}\, dx\, dx'\, dy\, dy' \]
where $\mathscr{B}$ depends on $x,x',y,y',\lambda,r$ and is given as above.

Given a general convex tree $\mathcal{T}$ we now {\em define} $\mathcal{B}_{\mathcal{T},\lambda,\r} = \sum_{Q\in\mathcal{T}} \mathcal{B}_{\{Q\},\lambda,\r}$
and observe that \eqref{eqn:telescoping} continues to hold.
This is because convexity of the tree allows us to write the right hand side of \eqref{eqn:telescoping-pre} as 
\[\Xi_{\{Q_\mathcal{T}\},\lambda,\r} - \Xi_{\mathcal{L}(\mathcal{T}),\lambda,\r} + \mathcal{B}_{\mathcal{T},\lambda,\r}    =  \sum_{\substack{Q\in \mathcal{T}}} \Big(\;   \Xi_{\{Q\},\lambda,\r}  -   \sum_{Q'\textup{ child of }Q} \Xi_{\{Q'\},\lambda,\r} \; + \mathcal{B}_{\{Q\},\lambda,\r}\Big ).\]
Applying the single rectangle case of \eqref{eqn:telescoping} this becomes
\[ \sum_{Q\in\mathcal{T}} \Big( \alpha \Theta_{\{Q\},\lambda,\r}^{(1)} + \beta \Theta^{(2)}_{\{Q\},\lambda,\r} \Big) = \alpha \Theta_{\mathcal{T},\lambda,\r}^{(1)} + \beta \Theta^{(2)}_{\mathcal{T},\lambda,\r}. \]
It remains to prove \eqref{eqn:barkest-final}. %
From the above computation we know that $\mathcal{B}_{\mathcal{T},\lambda,\r}$ is a linear combination of finitely many terms each of which either takes the form
\begin{align}\label{eqn:barkestpf1}
\sum_{k\in \mathbb{Z}}  \sum_{\substack{Q=I\times J\in \mathcal{T}\\ \ell(Q)=2^{k}}}   & \int_{2^{k-1}}^{2^{k}} \int_{J}  \int_{\R^{4}} f_1(x',y)f_2(x,y')f_3(x,y)f_4(x',y')\\\nonumber &  [(\phi_1)_{t^\alpha,\p}(x)(\phi_2)_{t^\alpha,\p} (x')]_{\p\in I} (\phi_3)_{t^\beta,\q}(y)(\phi_4)_{t^\beta,\q}(y') dx\,dx'\,dy\,dy'\, d\q\, t^{\alpha-1} dt ,
\end{align}
or
\begin{align}\label{eqn:barkestpf2}
\sum_{k\in \mathbb{Z}}  \sum_{\substack{Q=I\times J\in \mathcal{T}\\ \ell(Q)=2^{k}} } &  \int_{2^{k-1}}^{2^{k}}  \int_{I}   \int_{\R^{4}} f_1(x',y)f_2(x,y')f_3(x,y)f_4(x',y')\\\nonumber &  (\phi_1)_{t^\alpha,\p}(x)(\phi_2)_{t^\alpha,\p} (x') [(\phi_3)_{t^\beta,\q}(y)(\phi_4)_{t^\beta,\q}(y')]_{\q\in J} dx\,dx'\,dy\,dy'\, d\p\, t^{\beta-1} dt ,
\end{align}
with coefficients that are $O(\lambda (1+|\r|))$ (with constants depending only on $\alpha,\beta$) and each function $\phi_i$ being one of $\g_{\lambda,0}, \g_{1,\r},\h_{\lambda,0},\h_{1,\r}$.
We claim that \eqref{eqn:barkestpf1} and \eqref{eqn:barkestpf2} are both bounded by
\[C_{\lambda,\r} |Q_{\mathcal{T}}| \prod_{j=1}^4 \mathcal{M}_{\mathcal{T}}(f_j).\]
Since the proofs are identical up to notational changes we only give the proof of this claim for terms of the form \eqref{eqn:barkestpf1}.
Summing over $Q=I\times J$ and noting the telescoping sum in $I$ we estimate the expressions in question up to an absolute  constant by
\begin{align*}
 \sum_{k\in \mathbb{Z}} \int_{2^{k-1}}^{2^{k}} \sum_{J:\, |J|=2^{k\beta}}  \int_{J} 
& \sum_{p: \{p\} \times J \subseteq \partial T_k}   \int_{\R^{4}}f_1(x',y)f_2(x,y')f_3(x,y)f_4(x',y')\\
& (\phi_1)_{t^\alpha,\p}(x)(\phi_2)_{t^\alpha,\p} (x') (\phi_3)_{t^\beta,\q}(y)(\phi_4)_{t^\beta,\q}(y') dx\,dx'\,dy\,dy'\, d\q\, t^{\alpha-1}\, dt.
\end{align*}
Here $\partial T_k$ denotes the topological boundary of
$T_k = \cup_{Q\in \mathcal{T}, \, \ell(Q)= 2^{k}} Q$
in $\R^2$. By Lemma \ref{lem:bdbymax} and the pointwise estimates \eqref{domfct}, this can be further estimated by $O((\lambda(1+|\r|))^N)$ times
\begin{align*}
& \Big ( \prod_{j=1}^4 \mathcal{M}_\mathcal{T}(f_j) \Big )  \sum_{k\in \mathbb{Z}} 2^{k(\alpha+\beta)} \sum_{J:\, |J|=2^{k\beta }}  
\# \{p: \{p\}\times J \subseteq \partial T_k\} \, %
\\
& \lesssim \Big ( \prod_{j=1}^4 \mathcal{M}_\mathcal{T}(f_j) \Big )  \sum_{k\in \mathbb{Z}} 2^{k(\alpha+\beta)} \# \{\partial T_k \cap (2^{k\alpha} \Z \times 2^{k\beta}\Z)\}.
\end{align*}
To finish the proof it remains to show that
\begin{equation}\label{eqn:treecountingest}
\sum_{k\in \mathbb{Z}}  2^{k (\alpha+\beta)} \# \{\partial T_k \cap ((2^{\alpha k}\Z)\times (2^{\beta k}\Z) )\}\lesssim |Q_{\mathcal{T}}|.
\end{equation}
If $(\p,\q)\in \partial T_k \cap ((2^{\alpha k}\Z)\times (2^{\beta k}\Z)$, then there exist $\epsilon,\epsilon'\in \{\pm 1\}$ so that
\[ T_k\cap (p + \epsilon 2^{k\alpha}, p) \times (q + \epsilon' 2^{k\beta}, q) = \emptyset. \]
(We write $(a,b)$ to denote the open interval $(\mathrm{min}(a,b),\mathrm{max}(a,b))$.) This is because $T_k$ is a union of dyadic rectangles $Q$ with $\ell(Q)=2^k$. Without loss of generality we only consider the case $\epsilon=\epsilon'=-1$.
For each such point consider  the open dyadic rectangle 
\begin{align*}
Q(\p,\q,k)=  (\p-2^{k\alpha }, \p-2^{(k-1)\alpha}(2^\alpha-1)) \times  (\q-2^{k\beta}, \q-2^{(k-1)\beta}(2^\beta-1))
\end{align*}
which has   area $2^{(k-1)(\alpha+\beta)}$. We claim that rectangles of this form are pairwise disjoint. This implies \eqref{eqn:treecountingest}, because each $Q(\p,\q,k)$ is contained in the union of $Q_{\mathcal{T}}$ and neighboring dyadic rectangles of the same size. To see the claim, suppose that  $Q(\p,\q,k)$ and $Q(\p',\q',k')$ intersect in a set of positive measure. If $k=k'$, then they must coincide since they are dyadic and of the same scale.
So suppose that $k<k'$,   hence  $Q(\p,\q,k)$ is contained in $Q(\p',\q',k')$. Then the point $(\p,\q)$ is contained in  $(\p'-2^{k'\alpha},p')\times (\q'-2^{k'\beta}, \q')$, which is disjoint from $T_{k'}$.  This shows that $(\p,\q)\in T_k$  but $(\p,\q)\notin T_{k'}$,
contradicting convexity of $\mathcal{T}$.
The proof of Lemma \ref{lemma:telescoping} is now complete.
 
 \subsection{Combining the trees}\label{sec:planting} %
Here we complete the proof of Proposition \ref{prop:twisted} in the case $2<p,q<\infty$, $1<r<2$. This closely follows arguments from \cite{Thi95}, \cite[\S 4]{Kov12}. Write $p_1=p$, $p_2=q$, and $p_3=r'=\tfrac{r}{r-1}$. By homogeneity we  normalize 
\[ \|f_j\|_{p_j} = 1 \]
for all $j=1,2,3$. 
Fix a finite collection $\mathcal{Q}_0$ of dyadic rectangles. For every tuple of integers  $\mathbf{n} = (n_1,n_2,n_3)$ we define the collection of dyadic cubes
\[ \mathcal{P}_{\mathbf{n}} = \{ Q\in\mathcal{Q}_0 \,:\, 2^{n_j-1}< \sup_{\mathcal{Q}_0\ni Q'\supseteq Q} \mathcal{M}_{\{Q'\}}(f_j) \le 2^{n_j}\;\text{for}\;j=1,2,3 \},\]
Let $\mathcal{P}^{\mathrm{max}}_\mathbf{n}$ be the collection of maximal cubes in $\mathcal{P}_{\mathbf{n}}$ with respect to  inclusion. For $Q\in \mathcal{P}^{\mathrm{max}}_\mathbf{n}$   set
$$\mathcal{T}_Q =\{ Q'\in \mathcal{P}_\mathbf{n} : Q' \subseteq Q\}.$$
where $\mathcal{M}_{\mathcal{Q}}$  has been defined in \eqref{def:mq}.
This is a finite convex tree with root $Q$ \chk and for different $Q\in \mathcal{P}^{\mathrm{max}}_\mathbf{n}$ the corresponding trees are disjoint. \chk
By the tree estimate (that is, Proposition \ref{prop:tree}), \chk
\begin{align*}
| \Lambda^{(u,v)}_{\mathcal{T}_Q}(f_1,f_2,f_3,1) | \le C_{u,v}   |Q| \prod_{j=1}^3 \mathcal{M}_{\mathcal{T}_Q}(f_j) %
\leq  C_{u,v} |Q| \, 2^{n_1+n_2+n_3}.
\end{align*}
Note that $\mathcal{M}_{\{{Q'}\}}(f_j)>0$ since $f_j$ does not vanish identically. The region of the integration of our form 
is therefore partitioned as follows:
\begin{align*}
\Omega_{\mathcal{Q}_0}
= \bigcup_{\mathbf{n}\in \Z^3}  \bigcup_{Q\in \mathcal{P}^\mathrm{max}_\mathbf{n}} {\Omega_{\mathcal{T}_Q}}
\end{align*} 
This yields 
\begin{align} \label{bdbyk}
|\Lambda^{(u,v)}_{\mathcal{Q}_0}(f_1,f_2,f_3,1)|\le C_{u,v}    \sum_{\mathbf{n}\in\Z^3} \Big ( \sum_{Q\in \mathcal{P}^\mathrm{max}_\mathbf{n}}|Q| \Big )  \, 2^{n_1+n_2+n_3} . \nonumber
\end{align}
For a function $f$ on $\R^2$ we  consider  the maximal function \[\mathcal{M}_{\mathrm{HL}} f(x,y) =\sup_{t>0}\Big( |f|^2*(\theta_{t^\alpha}\otimes\theta_{t^\beta})(x,y) \Big)^{1/2}.\]
It is pointwise dominated by an anisotropic variant of the Hardy--Littlewood maximal function, which is bounded on ${L}^p$ for $2<p\leq\infty$. 
We split $\Z^{3}=\cup_{j=1}^3\mathcal{N}_{j} $, 
where 
\[\mathcal{N}_{j}=\{\mathbf{n}=(n_1,n_2,n_3) : p_{j}n_{j}\geq p_{j'}n_{j'}\;\; \mathrm{for\;every}\; 1\leq j' \leq 3 \}.\]
For every $(x,y)\in Q\in \mathcal{P}_{\mathbf{n}}$ we have  that $\mathcal{M}_\mathrm{HL}f_{j}(x,y)\ge c\, 2^{n_{j}}$, where $c\in (0,\infty)$ is a constant. %
Also, the cubes in $\mathcal{P}^{\max}_{\mathbf{n}}$ are by definition disjoint, so
\begin{align}\label{pt2}
\sum_{Q\in \mathcal{P}^{\max}_{\mathbf{n}}}|Q| = \Big |\bigcup_{Q\in \mathcal{P}^{\max}_{\mathbf{n}}} Q \, \Big | \le |\{\mathcal{M}_\mathrm{HL} f_{j} \ge c\, 2^{n_{j}} \}|.
\end{align}
for every $j=1,2,3$.
Combining everything and using $\sum_{j=1}^3 p_j^{-1} =1$  we obtain that \[\Lambda^{(u,v)}_{\mathcal{Q}_0}(f_1,f_2,f_3,1)\] is bounded up to a constant $C_{u,v}$ by a sum over $j=1,2,3$ of\chk
\[\sum_{\mathbf{n} \in \mathcal{N}_{j}} 2^{n_1+n_2+n_3} |\{\mathcal{M}_\mathrm{HL} f_{j} \geq c\, 2^{n_{j}} \}|  
   =  \sum_{n_{j} \in \Z} 2^{p_{j}n_{j}} |\{\mathcal{M}_\mathrm{HL}f_{j} \geq c\,2^{n_{j}} \}|\prod_{j'\neq j}\sum_{\substack{n_{j'}\in\Z\\ p_{j'}n_{j'}\leq p_{j}n_{j}}} 2^{n_{j'}-\frac{p_{j}n_{j}}{p_{j'}}}
  \]
\[\lesssim  \sum_{n_{j} \in \Z} 2^{p_{j}n_{j}} |\{\mathcal{M}_\mathrm{HL}f_{j} \geq c\, 2^{n_{j}} \}| \lesssim  \|\mathcal{M}_\mathrm{HL} f_{j} \|_{p_{j}}^{p_{j}} \lesssim  \| f_{j}\|_{p_{j}}^{p_{j}} \lesssim 1.\]
An application of the monotone convergence theorem removes the restriction to finite collections $\mathcal{Q}_0$ and finishes the proof.

\subsection{Fiber-wise Calder\'on--Zygmund decomposition}\label{sec:fwcz}
In this section we extend the range of exponents to the final range.
To achieve this we employ the fiber-wise Calder{\'o}n--Zygmund decomposition of Bernicot \cite{Ber12}.
Since the argument below will not depend on the cancellation properties of $\varphi,\psi$
it will suffice to prove bounds for the operator of the form 
\begin{align*}
U(f_1,f_2)(x,y) = \int_0^\infty c(t) (f_1*_1\phi_{t^\alpha,t^\alpha u})(x,y)  (f_2*_2\rho_{t^\beta,t^\beta u})(x,y)   \frac{dt}{t}
\end{align*}
where $\phi,\rho$ are   Schwartz functions  
and $|c(t)|\leq 1$. 
By the result from \S \ref{sec:planting}, we  may assume the bound
\begin{align*}
\|U(f_1,f_2)\|_{r_0} \le C_{u,v} \|f_1\|_{p_0} \|f_2\|_{q_0}.
\end{align*}
whenever $2<p_0,q_0<\infty$, $1<r_0<2$.
We will apply the fiber-wise Calder\'on--Zygmund decomposition in the fiber $f_1(\cdot,y)$ to show the weak $L^p\times L^q \rightarrow L^{r,\infty}$   estimates whenever $1\leq p \leq  p_0, q=q_0,$ and $p^{-1}+q^{-1}=r^{-1}$.
That is,  for every $\lambda>0$,   
\begin{align*}
|\{(x,y)\in \R^2: |U(f_1,f_2)(x,y)|>\lambda\}| \lesssim_{p,q} C_{u,v} \|f_1\|^r_p \|f_2\|^r_q\lambda^{-r}.
\end{align*}
Multilinear interpolation (see for instance \cite{MS13}) then yields strong-type estimates in the range \begin{equation}\label{fwrange-1}
1<p< \infty,\, 2<q<\infty,\, \tfrac{1}{2}<r<2.
\end{equation}
Analogous considerations in the second fiber $f_2(x,\cdot)$ finally yield estimates whenever
\[1<p <\infty,\,1<q<\infty,\, \tfrac{1}{2}<r<2.\]
Fix exponents $2<p_0,q_0<\infty$ and
choose   $p,q,r$ such that $1\leq p \leq p_0,\,q=q_0,\, 1/p+1/q=1/r$. Fix the functions $f_1,\,f_2$. By homogeneity we may assume 
$$\|f_1\|_p=\|f_2\|_q=1.$$
Performing a Calder{\'o}n-Zygmund decomposition of the function $x\mapsto f_{1,y}(x)=  f_1(x,y)$ for each $y$ at level $\lambda^{r/p}$ we obtain disjoint dyadic intervals $\{I_{y,j}\}$
such that  $|f_{1,y}(x)| \leq \lambda^{r/p}$ for a.e. $x\not\in \cup_j I_{y,j}$, $$|\cup_{j} I_{y,j}|\leq \lambda^{-r} \|f_{1,y}\|^p_{L^{p}(\mathbb{R})} $$ and $\lambda^r <|I_{y,j}|^{-1/p} \|f_{1,y}\|_{L^p(I_{y,j})} \leq 2\lambda^{r}$. We split $f_{1,y} = g_y + b_y$, where  $b_y= \sum_j b_{y,j}$ and 
\begin{align*}
g_y(x) = \left\{ \begin{array}{ll}
f_{1,y}(x) & ;x\not\in \cup_j I_{y,j}\\
|I_{y,j}|^{-1/p}  \|f_{1,y}\|_{L^p(I_{y,j})} & ;x\in I_{y,j}
\end{array} \right .
\end{align*}
and  
$$b_{y,j}(x) = \Big( f_1(x) - |I_{y,j}|^{-1/p}  \|f_{1,y}\|_{L^p(I_{y,j})} \Big )\, \mathbf{1}_{I_{y,j}}. $$

Denote $g = (x,y) \mapsto g_y(x)$  and   consider first the good part $U(g,f_2)$.    From   
$\|g\|_{\infty} \leq \lambda^{r/p}$ and $\|g\|_{p} \leq 1$  it follows that for every $p_1>p$ 
$$\|g\|_{p_1}\leq \lambda^{r(1/p-1/p_1)}.$$
From the strong $L^{p_0}\times L^{q}$ to $L^{r_0}$
we thus obtain 
\begin{align*}
C_{u,v}^{-1}|\{(x,y)\in \R^2: |U(g,f_2)(x,y)|>\lambda \} |\lesssim \lambda^{-r_0}  \|U(g,f_2)\|^{r_0}_{{r_0}} & \lesssim   \lambda^{-r_0} \|g\|^{r_0}_{{p_0}}\|f_2\|^{r_0}_{q} \\
& \lesssim      \lambda^{-r_0+ r\cdot r_0(1/p-1/p_0)} =    \lambda^{-r},
\end{align*}
which is the desired estimate for the good part.
It remains to consider the bad part $U(b,f_2)$, where $b =(x,y) \mapsto b_y(x)$.  Since
\begin{align*}
|\{(x,y) \in    (\cup_j 2I_{y,j} ) \times \mathbb{R} : |U(b,f_2)| > \lambda \}| \leq \int_{\mathbb{R}} |\cup_j 2I_{y,j}| dy \lesssim  \lambda^{-r} \|f_2\|_{L^p(\mathbb{R}^2)}^p  = \lambda^{-r},
\end{align*}
it suffices to estimate
$$|\{(x,y) \in    (\cup_j 2I_{y,j} )^c \times \mathbb{R} : |U(b,f_2)| > \lambda \}|.$$
Note that 
$$(b(\cdot, y) *_1 \phi_{t^\alpha,t^\alpha u})(x) = b_y * \phi_{t^\alpha,t^\alpha u} (x) = \sum_j b_{y,j} * \phi_{t^\alpha,t^\alpha u}(x).$$
Denote by $c_{y,j}$ the center of $I_{y,j}$.
By the mean zero property of $b_{y,j}$ and the fact that they are supported in $I_{y,j}$ we obtain  
\begin{align*}
| b_{y,j} * \phi_{t^\alpha,t^\alpha u} (x) | &\leq \int_{I_{y,j}} |b_{y,j}(w)| |\phi_{t^\alpha,t^\alpha u}(x-w) - \phi_{t^\alpha,t^\alpha u}(x-c_{y,j}) | dw\\
&\lesssim C_{u,v}  \int_{I_{y,j}} |b_{y,j}(w)||I_{y,j}|t^{-2\alpha} (1+t^{-\alpha}|x-c_{y,j,w}|)^{{-100}}  dw
\end{align*}
for a suitable $\min(w,c_{y,j})\leq  c_{y,j,w} \leq \max(w,c_{y,j}),$ 
by the mean value theorem. Since $|x-c_{y,j,w}|>|x-c_{y,j}|/2$ we bound the last display by 
\begin{align*}
& \lesssim C_{u,v} |I_{y,j}|t^{-2\alpha}  (1+t^{-\alpha} |x-c_{y,j}|)^{{-100}}  \int_{I_{y,j}} |b_{y,j}(w)| dw.
\end{align*}
If   $p^{-1}+(p')^{-1}=1$, from $ \|b_{y,j}\|_{L^{p}(\mathbb{R})} \leq |I_{y,j}|^{1/p} \lambda^{r/p}$  we have by H\"older's inequality
$$\|b_{y,j}\|_{L^{1}(\mathbb{R})} \leq |I_{y,j}|^{\frac{1}{p'}}  \|b_{y,j}\|_{L^{p}(\mathbb{R})} \leq  |I_{y,j}| \lambda^{r/p}  .$$
Thus we obtain the bound
\begin{align*}
 | b_{y,j} * \phi_{t^\alpha,t^\alpha u} (x) |\lesssim C_{u,v} \lambda^{r/p}  |I_{y,j}|^2 t^{-2\alpha}  (1+t^{-\alpha} |x-c_{y,j}|)^{-10}   . 
\end{align*}
Integrating  in $t$ we have
\begin{align*}
&   |I_{y,j}|^2 |x-c_{y,j}|^{-2}  \int_0^\infty t^{-2\alpha}    |x-c_{y,j}|^{2}  (1+t^{-\alpha} |x-c_{y,j}|)^{-10}  \frac{dt}{t}
 \\
 & \lesssim   |I_{y,j}|^2 |x-c_{y,j}|^{-2}\\
 & \lesssim \Big( 1+ \frac{|x-c|}{|I_{y,j}| } \Big )^{-2},
\end{align*}
where the last estimate follows since
  $|I_{y,j}|\leq 2|x-c_{i,j}|$.
Therefore, we have shown that whenever $x\in (\cup_j I_{y,j})^c$ one has the bound
\begin{align*}
|U(b,f_2)|\lesssim C_{u,v} \lambda^{r/p} h(x,y) \mathcal{M}^{(2)}f_2(x,y),
\end{align*}
where 
\begin{align*}
h(x,y) = \sum_{j} \Big( 1+ \frac{|x-c|}{|I_{y,j}|} \Big )^{-2}
\end{align*}
and $\mathcal{M}^{(2)}f_2$ denotes the Hardy--Littlewood maximal function in the second fiber.
One has (see \cite[Exercise 4.6.6]{Gra}),
\begin{align*}
\|h(x,y)\|_{L^p_x(\R)} \lesssim_p  \Big( \sum_{j}|I_{y,j}|\Big)^{1/p}.
\end{align*}
Therefore,
\begin{align*}
\|h\|_{L^p(\mathbb{R}^2)} \lesssim_p \Big\| \Big( \sum_{j}|I_{y,j}|\Big)^{1/p} \Big\|_{L^p_y(\mathbb{R})}\lesssim \lambda^{-r/p} \| \|f_{1,y}(x)\|_{L^p_x(\mathbb{R})} \|_{L^p_y(\mathbb{R})}= \lambda^{-r/p}.
\end{align*}
Putting everything together we obtain
\begin{align*}
&C_{u,v}^{-1}|\{(x,y) \in    (\cup_j 2I_{y,j} )^c \times \mathbb{R} : |U(b,f_2)| > \lambda \}| \\
&\lesssim   |\{(x,y) \in    (\cup_j 2I_{y,j} )^c \times \mathbb{R} :   h(x,y) \mathcal{M}^{(2)}f_2(x,y) > \lambda^{1-r/p} \}| \\
&\leq   \lambda^{-r+r^2/p} \|h \cdot \mathcal{M}^{(2)}f_2\|_r^r \\
&\leq  \lambda^{-r+r^2/p} \|h\|^r_p \|\mathcal{M}^{(2)}f_2\|_q^r \lesssim_p  \lambda^{-r+r^2/p-r^2/p} = \lambda^{-r},
\end{align*}
which finishes the proof.

\section{Applications}\label{sec:applications}

\subsection{Proof of Theorem \ref{thm:patterns}}\label{sec:patterns}
We will prove that for every measurable function $f$ on $[0,1]^2$ satisfying $0\le f\le 1$ and $\int_{[0,1]^2} f\ge \varepsilon$,
\begin{equation}\label{eqn:patternpenult}
\int_{[0,1]^3} f(x,y) f(x+t,y) f(x,y+t^2) dx \,dy \,dt > \delta,
\end{equation}
where $\delta=\delta(\varepsilon)=\exp(-\exp(\varepsilon^{-C}))$ for some $C>0$. This implies Theorem \ref{thm:patterns} by setting $f=\mathbf{1}_E$. To prove \eqref{eqn:patternpenult} we will need an upper and a lower bound. The upper bound is provided by Theorem \ref{mainresult} 
and the lower bound is the content of the following.

\begin{lemma}\label{lem:lowerbd}
Let $\vartheta\geq 0$ be an even smooth function which is supported in $[-2,2]$,  constant    on $[-1,1]$,  monotone on $[1,2]$ and normalized such that $\widehat{\vartheta}(0)=1$. Let $\vartheta_k(x) = 2^k \vartheta(2^k x)$.
For any function $f$ on $\R^2$ that is supported in $[0,1]^2$ and satisfies $0\le f\le 1$ and any $k,l\in \mathbb{N}$ we have 
\[\int_{[0,1]^2} f(f*_1\vartheta_k)(f*_2\vartheta_l) \geq c_0 \Big(\int_{[0,1]^2} f\Big)^4 \]
for some constant $c_0>0$ depending only on $\vartheta$. 
\end{lemma}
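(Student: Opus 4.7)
My plan is to reduce the claim to a ``restricted Gowers box'' lower bound, and to prove that bound via a one-dimensional partitioning lemma applied twice.

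\textbf{Reduction.} The structural hypotheses on $\vartheta$ immediately give $\vartheta(0)\geq 1/4$ (since $\vartheta\leq\vartheta(0)$ on $[-2,2]$ together with $\int\vartheta=1$ force $1\leq 4\vartheta(0)$), and the fact that $\vartheta$ is constant on $[-1,1]$ gives the pointwise bound $\vartheta_k(u)\geq 2^k\vartheta(0)\mathbf{1}_{|u|\leq 2^{-k}}(u)$. Writing $r=2^{-k}$ and $s=2^{-l}$, this yields
\[(f*_1\vartheta_k)(x,y)\geq 2\vartheta(0)\cdot \tfrac{1}{2r}\int_{x-r}^{x+r} f(u,y)\,du,\]
and analogously for $f*_2\vartheta_l$. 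Expanding both averages, using $f(x+a,y+b)\leq 1$ to append a fourth factor, and substituting $(x',y')=(x+a,y+b)$ reduce the lemma to showing
\[K := \iiiint_{|x-x'|\leq r,\ |y-y'|\leq s} f(x,y)f(x',y) f(x,y') f(x',y')\,dx\,dx'\,dy\,dy' \geq c\,rs\,\delta^4,\]
where $\delta=\int f$.

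\textbf{Main obstacle.} The standard Gowers $U^2$ double Cauchy--Schwarz gives $\iiiint_{[0,1]^4}ffff\geq \delta^4$ without any restriction, but restricting the integration to the diagonal box $\{|x-x'|\leq r,\,|y-y'|\leq s\}$ could in principle destroy essentially all of this mass. To salvage it I would establish the following 1D lemma: for any measurable $q:[0,1]\to[0,1]$ with $\int q=H$ and any $s\in(0,1]$,
\[\iint_{[0,1]^2,\,|y-u|\leq s} q(y) q(u)\,dy\,du\geq \tfrac{s}{2} H^2.\]
The proof is to partition $[0,1]$ into $N=\lceil 1/s\rceil\leq 2/s$ subintervals $I_i$ of length at most $s$, discard all mass outside $\bigcup_i I_i\times I_i$ (on which $|y-u|\leq s$ holds automatically), and apply Cauchy--Schwarz: $\sum_i (\int_{I_i} q)^2\geq H^2/N\geq sH^2/2$.

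\textbf{Iteration and conclusion.} I then apply this 1D lemma twice. For fixed $(x,x')$, taking $q(y)=f(x,y)f(x',y)$ yields $K\geq \tfrac{s}{2}\int_{|x-x'|\leq r} H(x,x')^2\,dx\,dx'$ with $H(x,x')=\int f(x,y)f(x',y)\,dy$. Next I expand the square $H(x,x')^2$ as an integral over $y,y'$, swap orders of integration, and apply the 1D lemma a second time with $p(x)=f(x,y)f(x,y')$ to extract a further factor $\tfrac{r}{2}\tilde H(y,y')^2$, where $\tilde H(y,y')=\int f(x,y)f(x,y')\,dx$. Two final applications of Cauchy--Schwarz on $[0,1]$ give
\[\iint\tilde H(y,y')^2\,dy\,dy'\geq \Bigl(\iint\tilde H\Bigr)^2=\Bigl(\int G(x)^2\,dx\Bigr)^2\geq \delta^4\]
with $G(x)=\int f(x,y)\,dy$. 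Multiplying these lower bounds yields $K\geq rs\,\delta^4/4$, and tracing constants gives the claim with $c_0=\vartheta(0)^2/4\geq 1/64$.
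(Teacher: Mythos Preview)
Your proof is correct and follows essentially the same strategy as the paper's: both arguments reduce the convolution to a small-interval average via the pointwise lower bound on $\vartheta_k$, and then obtain the $\delta^4$ lower bound by partitioning $[0,1]$ into intervals of the relevant scale and applying Cauchy--Schwarz twice. The paper phrases this via dyadic martingale averages $E_k$ and works upward from $(\int f)^4$, whereas you pass through the restricted Gowers box expression $K$ and work downward via your explicit 1D partitioning lemma; these are cosmetic differences in presentation rather than in substance.
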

This is an analogue of \cite[Lemma 6]{Bou88}. We postpone the proof of the lemma to the end of this section. 

\begin{proof}[Proof of Theorem \ref{thm:patterns}]
This follows by the exact same argument as given by Bourgain in \cite{Bou88}, but we provide details for the sake of completeness.
Write
\[I=\int_{[0,1]^3} f(x,y) f(x+t,y) f(x,y+t^2) dx \,dy \,dt. \]

Let $\tau$ be smooth, supported on $[\frac12,2]$, taking values in $[0,1]$ and $\int_\R \tau=1$. Set $\tau_k(x)=2^k \tau(2^k x)$ for $k\in\mathbb{N}$.
Fix natural numbers $1<k<k'<k''$.
Estimate    
\[ 2^{k'} I \ge \int_{[0,1]^3} f(x,y) f(x+t,y) f(x,y+t^2) \tau_{k'}(t) dx \,dy  \,dt = I_1 + I_2 + I_3, \]
where
{\allowdisplaybreaks
\begin{align*}
   I_1 &= \int_{[0,1]^3} f(x,y) f(x+t,y) (f*_2\vartheta_k)(x,y+t^2) \tau_{k'}(t) dx \,dy  \,dt,\\
   I_2 &= \int_{[0,1]^3} f(x,y) f(x+t,y) (f*_2\vartheta_{k''}-f*_2\vartheta_k) (x,y+t^2) \tau_{k'}(t) dx \,dy  \,dt,\\
    I_3 &=\int_{[0,1]^3} f(x,y) f(x+t,y) (f-f*_2\vartheta_{k''})  (x,y+t^2) \tau_{k'}(t) dx \,dy  \,dt.
\end{align*}}
By decomposing  ${f}-f*_2\vartheta_{k''}$ in frequency into dyadic blocks and applying Theorem \ref{mainresult} to each block, there exists $\sigma>0$ so that
\[ |I_3|\lesssim 2^{2\sigma k'-\sigma k''}.\]
By the Cauchy--Schwarz inequality in $(x,y)$ we also have 
\[ |I_2| \le  \|f*_2\vartheta_{k''} - f*_2\vartheta_k\|_2. \]
To estimate $I_1$ we decompose further into
\[ I_1 = I_4 + I_5 + I_6, \]
where
\begin{align*}
    I_4 &= \int_{[0,1]^3} f(x,y) f(x+t,y) (f*_2\vartheta_k)(x,y+t^2) \tau_{k'}(t) dx \,dy  \,dt -\int_{[0,1]^2} f (f*_1 \tau_{k'}) (f*_2\vartheta_k),\\
    I_5 &= \int_{[0,1]^2} f (f*_1 \tau_{k'}) (f*_2\vartheta_k) - \int_{[0,1]^2} f (f*_1 \vartheta_{k}) (f*_2\vartheta_k),\\
    I_6 &= \int_{[0,1]^2} f (f*_1 \vartheta_{k}) (f*_2\vartheta_k).
\end{align*}
By the mean value theorem applied to $f*_2\vartheta_k$ we have
\[ |I_4| \lesssim 2^{k-k'}. \]
Further, we estimate
\[ |I_5| \leq \|f*_1\tau_{k'} - f*_1\vartheta_{k}\|_2.\]
The right-hand side is bounded by
\[
\|f*_1\tau_{k'}*_1\vartheta_{k''} - f*_1\vartheta_{k}*_1\tau_{k'}\|_2 + \|\tau_{k'}-\tau_{k'}*\vartheta_{k''}\|_1 + \|\vartheta_k - \vartheta_k*\tau_{k'}\|_1.
\]
Using $\int \tau=1$ and applying the mean value theorem applied to the last two terms finally gives 
\[
|I_5| \le \|f*_1\vartheta_{k''} - f*_1\vartheta_{k}\|_2 +
O(2^{k'-k''} + 2^{k-k'}).\]
Finally, Lemma \ref{lem:lowerbd} implies
\[ |I_6| \ge c_0 \varepsilon^4.  \]
for some constant $c_0\in (0,\infty)$.

Putting together the estimates for  $I_1,\ldots, I_5$ and choosing $k'$ sufficiently large with respect to $k$ and $k''$ sufficiently large with respect to $k'$  we obtain
\[c_0 \varepsilon^4 \le 2^{k'}I +  \|f*_2\vartheta_{k''} - f*_2\vartheta_k\|_2 + \|f*_1\vartheta_{k''} - f*_1\vartheta_{k}\|_2 + 2^{-100}c_0\varepsilon^4. \]
Thus either
 $I>2^{-k'-10}c_0\varepsilon^4$ or 
 \[ \|f*_2\vartheta_{k''} - f*_2\vartheta_k\|_2 + \|f*_1\vartheta_{k''} - f*_1\vartheta_{k}\|_2 > 2^{-10}c_0\varepsilon^4.\]

Given a sufficiently large constant $M$ and an initial value $k_0$, we recursively construct a sequence $k_0<k_1<\ldots$ by $k_{l+1}= M k_l$ such that for each $l$ either
 \begin{align}\label{dychotomy1}
     I>2^{-k_{l+1}-10}c_0\varepsilon^4
 \end{align}
 or 
 \begin{align}\label{dychotomy2}
   \|f*_2\vartheta_{k_{l+1}} - f*_2\vartheta_{k_l}\|_2 + \|f*_1\vartheta_{k_{l+1}} - f*_1\vartheta_{k_l}\|_2 > 2^{-10}c_0\varepsilon^4.
 \end{align}
 The constant $M$ can be chosen independent of $\varepsilon$ by making $k_0$ sufficiently large (specifically, $k_0\gtrsim \log(\varepsilon^{-1})$).
 
 Suppose that \eqref{dychotomy1} fails for all $l=0,\dots,L$.
By \eqref{dychotomy2} and Plancherel's theorem, since $\|f\|_2\lesssim 1$,
\[ L \cdot 2^{-10} c_0 \varepsilon^4\le \sum_{l=0}^L \|f*_2\vartheta_{k_{l+1}} - f*_2\vartheta_{k_l}\|^2_2 +   \|f*_1\vartheta_{k_{l+1}} - f*_1\vartheta_{k_l}\|^2_2\lesssim 1.\]
Thus, if \eqref{dychotomy1} fails for all $l=0,\dots,L$, then $L \lesssim \varepsilon^{-4}$. In other words, there exists $l$ with $l\lesssim \varepsilon^{-4}$ so that \eqref{dychotomy1} holds. Since $k_l\le M^l k_0$ we obtain 
\[ I> \exp(-\exp(\varepsilon^{-C})) \]
for some $C<\infty$.
\end{proof}

It remains to give the proof of Lemma \ref{lem:lowerbd}.
\begin{proof}[Proof of Lemma \ref{lem:lowerbd}]
For $k\in \Z$ denote by $\mathcal{D}_k$ the set of all dyadic intervals of size $2^{-k}$ which are contained in $[0,1]$. For a function $g$ on $[0,1]$ we denote the  martingale averages
\[E_kg =\sum_{I\in \mathcal{D}_k} \Big(   |I|^{-1}\int_I g \Big ) \mathbf{1}_I.\]
Because of the pointwise bound $E_kg \lesssim g*\vartheta_k$
it suffices to show 
\[\int_{[0,1]^2} f(E_k^{(1)}f)(E_l^{(2)}f) \geq  \Big(\int_{[0,1]^2} f\Big)^4, \]
where  we have denoted $E_k^{(1)}f(x,y) = (E_kf(\cdot,y))(x)$ and   $E_k^{(2)}f(x,y) = (E_kf(x,\cdot))(y)$.

This follows by two applications of the Cauchy--Schwarz inequality. Indeed, observe that
\[
\Big( \int_{[0,1]^2}f(x,y)dx\,dy\Big)^4 = \Big( \int_{[0,1]}\Big( \sum_{I\in\mathcal{D}_k}|I|^{-\frac{1}{2}}|I|^{\frac{1}{2}} \int_If(x,y)dx\Big) dy\Big)^4.
\]
 By the Cauchy--Schwarz inequality, the right--hand side is bounded by
\[\Big( \int_0^1 \sum_{I\in \mathcal{D}_k} |I|^{-1}\Big( \int_I f(x,y)dx\Big)^2 dy\Big)^2.  \]
Expanding the square, using Fubini's theorem and splitting the integration in $y$ over dyadic intervals of scale $2^{-l}$, this can be written as
\[\Big( \sum_{I\in \mathcal{D}_k}|I|^{-1}\int_I\int_I \Big( \sum_{J\in \mathcal{D}_l} |J|^{-1/2}|J|^{1/2} \int_J f(x,y)f(x',y)dy \Big) dx\,dx'\Big)^2. \]
By two more applications of the Cauchy--Schwarz inequality we obtain a bound by 
\[\sum_{I\in \mathcal{D}_k}|I|^{-1}\int_I\int_I  \sum_{J\in \mathcal{D}_l} |J|^{-1}\Big( \int_J f(x,y)f(x',y)dy \Big)^2 dx\,dx', \]
which equals
\[\sum_{I\in \mathcal{D}_k} \sum_{J\in \mathcal{D}_l} |I|^{-1}|J|^{-1} \int_I\int_I   \int_J \int_J f(x,y)f(x',y)f(x,y')f(x',y')  dy\,dy'\,dx\,dx' .\]
Using the upper bound $f\le 1$ and changing the order of integrations produces a majorization by
\[\int_{[0,1]} \int_{[0,1]} \sum_{I\in \mathcal{D}_k} \sum_{J\in \mathcal{D}_l} |I|^{-1}|J|^{-1} \int_I    \int_J f(x,y)f(x',y) f(x,y') \mathbf{1}_I(x)\mathbf{1}_J(y) dy\,dy'\,dx\,dx',  \]
which can be recognized as 
\[\int_{[0,1]^2} f(E_k^{(1)}f)(E_l^{(2)}f),\]
as desired.
\end{proof}
\subsection{Bilinear Hilbert transform with curvature}\label{sec:bht}
In this section we show how Theorem \ref{thm:singint} implies the $L^2\times L^2\rightarrow L^1$ estimate for the operator
\begin{equation}\label{eqn:bht}
(g_1,g_2)\mapsto \mathrm{p.v.}\int_{\R} g_1(x+t)g_2(x+t^2)\,t^{-1}\,dt.
\end{equation}
This estimate was first proved in \cite{Li13}.
Let $g_1,g_2,g_3$ be test functions on $\R$. Let $\varphi$ be non-negative, smooth and compactly supported function with $\|\varphi\|_{L^{2}(\R)}=1$
and $\lambda>0$. Set
\begin{align*}
&f_1(x,y) = g_1(x+y)\lambda^{1/2}\varphi(\lambda y)\\
&f_2(x,y) = g_2(x+y)\lambda^{1/2}\varphi(\lambda y)\\
&f_3(x,y) = g_3(x+y)
\end{align*}
Note that 
$\|f_i\|_{L^{p_i}(\R^2)} = \|g_i\|_{L^{p_i}(\R)}$ for $i=1,2,3$ for $(p_1,p_2,p_3)=(2,2,\infty)$.
With $T$ given by \eqref{eqn:thtc} we obtain 
\begin{align*}
\int_{\R^2}T(f_1,f_2)f_3& =\int_{\R^2}\textup{p.v.} \int_{\R}  g_1(x+y+t)g_2(x+y+t^2)g_3(x+y) \lambda\varphi(\lambda y)\varphi(\lambda (y+t^2))\frac{dt}{t}\,dx\,dy.
\end{align*}
Changing variables $x\mapsto x-y$, rescaling in $y$ and  changing the order of integration we obtain 
\begin{align*}
\mathrm{p.v.} \int_{\R}\int_{\R}g_1(x+t)g_2(x+t^2)g_3(x)\Phi(\lambda^{1/2}t) dx\, \frac{dt}{t}, 
\end{align*}
where $\Phi$ is a Schwartz function given by
\begin{equation}\label{eqn:bht-red-phi}
\Phi(t) = \int_{\R}\varphi(y)\varphi(y+t^2)dy.
\end{equation}
Taking $\lambda\rightarrow 0$ we obtain a  form dual to \eqref{eqn:bht} (the limit is justified by considering truncations of the $t$-integration and using the dominated convergence theorem). Thus, 
 the bound from Theorem \ref{thm:singint} implies the corresponding bound for \eqref{eqn:bht}.

\subsection{A maximal singular oscillatory integral of Stein--Wainger type}\label{sec:sw}
Consider the operator
\[ g\mapsto \sup_{N\in\R} \Big| \mathrm{p.v.} \int_{\R} g(x+t) e^{iN t^2} \frac{dt}t\Big|. \]
Stein \cite{Ste95} proved that this operator is bounded $L^2\to L^2$ (also see work of Stein and Wainger \cite{SW01}). Here we show how this bound can be obtained as a consequence of Theorem \ref{thm:singint}. First note that it is equivalent to prove that for every measurable function $N:\R\to\R$ the bilinear form
\[ (g_1, g_2)\mapsto \int_\R \mathrm{p.v.} \int_{\R} g_1(x+t) g_2(x) e^{iN(x) t^2} \frac{dt}t dx \]
is bounded $L^2\times L^2\rightarrow \mathbb{C}$ with constant uniform in $N(\cdot)$.
Let $\varphi$ be a non-negative smooth compactly supported function with $\|\varphi\|_{L^{2}(\R)}=1$ and $\lambda> 0$. Let
\begin{align*}
&f_1(x,y) = g_1(x) \lambda^{1/2}\varphi(\lambda y),\\
&f_2(x,y) = g_2(x)e^{iN(x)y} \lambda^{1/2}\varphi(\lambda y),\\
&f_3(x,y) = e^{-iN(x)y}.
\end{align*} 
Then $\|f_i\|_{L^2(\R^2)} = \|g_i\|_{L^2(\R)}$ for $i=1,2$, while $\norm{f_3}_\infty=1$. Moreover, 
\begin{align*}
\int_\R T(f_1,f_2)f_3 =\mathrm{p.v.} \int_{\R} \int_\R g_1(x+t) g_2(x) e^{iN(x)t^2}\Phi(\lambda^{1/2}t) dx\,\frac{dt}{t}, 
\end{align*}
where $\Phi$ is a Schwartz function (given by \eqref{eqn:bht-red-phi}). Taking a limit $\lambda\to 0$ and applying Theorem \ref{thm:singint} implies the claim.

\begin{remarka}
The reductions in \S \ref{sec:bht} and \S \ref{sec:sw} are adapted from \cite[Appendix B]{KTZ15}. The arguments are not restricted to $L^2$ bounds.
\end{remarka}

\section{Some open problems} \label{section:openproblems}
1.\ It would be interesting to study analogues in higher dimensions and/or with higher orders of multilinearity. Among these are the multilinear operators
\[ S_\alpha  (\mathbf{f})(x)= \mathrm{p.v.}\int_{\R^d} \prod_{j=1}^d f_j(x+t^{\alpha_j} e_j)\,\frac{dt}t\quad(x\in\R^d), \]
where $\mathbf{f}=(f_1,\dots,f_d)$ with $f_j:\mathbb{R}^d\to \mathbb{C}$, $\alpha\in\mathbb{N}^d$, and $e_j\in\R^d$ denoting the $j$th standard unit vector. The operators $S_{(1,\dots,1)}$ are known as simplex Hilbert transforms (\cite{Zor17}, \cite{DKT16}, \cite{DR18}). 
Theorem~\ref{thm:singint} concerns the case $d=2$ (the operator we study is $T=S_{(1,2)}$), and is the first positive result to be established when $d\ge 2$. The analysis developed here, as it currently stands, does not suffice to treat $d\ge 3$. 

2.\ Another variant is
\begin{equation} \label{anothervariant}
(f_1,f_2,f_3)\mapsto \int_\R f_1 (x+t) f_2(x+t^2) f_3(x+t^3)\, \frac{dt}t, 
\end{equation}
with $f_j:\R^1\to\mathbb{C}$, which can be viewed as a trilinear Hilbert transform with curvature. This operator was suggested by Lie \cite[\S 7]{Lie15} as a model for the trilinear Hilbert transform. Bounds for this object would be implied by bounds for $S_{(1,2,3)}$. No $L^p$ bounds are currently known for the trilinear Hilbert transform, though some cancellation was established in \cite{Tao16}, \cite{Zor17}, \cite{DKT16}. Both the operator $S_{(1,2)}$ treated in this paper,
and the variant \eqref{anothervariant},
have characters intermediate between those
of $S_{(1,2,3)}$,
and the operator \eqref{eqn:bht0}.

3.\ 
It would be desirable to extend the range of exponents in Theorem~\ref{thm:anisotp} to $r\ge 2$ (this would immediately give a corresponding extension of Theorem~\ref{thm:singint}).
It would also be interesting to extend Theorem~\ref{thm:singint} 
and Theorem~\ref{thm:maxfct} to a large range of exponents with $r<1$. A careful inspection of the arguments in \S \ref{sec:prelim} shows that  we are actually able to obtain some bounds with $r > 1-\varepsilon$ for some small $\varepsilon>0$ that is unlikely to be sharp.
We remark that the conclusions of Theorems \ref{thm:singint} and \ref{thm:anisotp} fail if $p=\infty$ or $q=\infty$.

4.\	It would be natural to study the analogous discrete maximal function
\[
M_{\Z}(f_1,f_2)(x,y) = \sup_{N\in\mathbb{N}} N^{-1}  
\sum_{n=1}^N \big| f_1(x+n,y)f_2(x,y+n^2)\big|
\]
with $f_j:\mathbb{Z}^2\to\mathbb{C}$ and $(x,y)\in\mathbb{Z}^2$. On a related note, we refer the reader to a very recent work by Krause, Mirek and Tao \cite{KMT20} concerning  pointwise ergodic theorems for certain bilinear polynomial averages.


\end{document}